\def\theequation{\thesection.\arabic{equation}}
\newcommand{\eqnum}{\refstepcounter{equation}\textup{\tagform@{\theequation}}}
\newcounter{copy}
\renewcommand{\thecopy}{\ifnum0=\c@section\arabic{copy}\else\thesection.\arabic{copy}'\fi}
\newcommand{\copynum}[1][]{\refstepcounter{copy}#1{\thecopy}} 
\theoremstyle{definition}
\newtheorem{defn}[equation]{Definition}
\newtheorem{notn}[equation]{Notation}
\theoremstyle{plain}
\newtheorem{thm}[equation]{Theorem}
\newtheorem{prp}[equation]{Proposition}
\newtheorem{lem}[equation]{Lemma}
\newtheorem{cor}[equation]{Corollary}
\theoremstyle{remark}
\newtheorem{rmk}[equation]{Remark}
\newcommand{\bB}{\mathbb{B}}
\newcommand{\bC}{\mathbb{C}}
\newcommand{\bD}{\mathbb{D}}
\newcommand{\bK}{\mathbb{K}}
\newcommand{\bM}{\mathbb{M}}
\newcommand{\bQ}{\mathbb{Q}}
\newcommand{\bR}{\mathbb{R}}
\newcommand{\bT}{\mathbb{T}}
\newcommand{\bV}{\mathbb{V}}
\newcommand{\bZ}{\mathbb{Z}}
\newcommand{\cE}{\mathcal{E}}
\newcommand{\cM}{\mathcal{M}}
\newcommand{\cP}{\mathcal{P}}
\newcommand{\cQ}{\mathcal{Q}}
\newcommand{\cU}{\mathcal{U}}
\newcommand{\cV}{\mathcal{V}}
\newcommand{\cW}{\mathcal{W}}
\newcommand{\cX}{\mathcal{X}}
\newcommand{\sE}{\mathscr{E}}
\newcommand{\geom}{\mathit{geom}}
\newcommand{\APS}{\mathrm{APS}}
\newcommand{\CWY}{\mathrm{CWY}}
\newcommand{\DG}{\mathrm{DG}}
\newcommand{\MF}{\mathrm{MF}}
\renewcommand{\Im}{\mathrm{Im} \hspace{0.1em}}
\newcommand{\pt}{\mathrm{pt}}
\newcommand{\id}{\mathrm{id}}
\newcommand{\ev}{\mathrm{ev}}
\newcommand{\Cl}{\text{\it C$\ell$}}
\newcommand{\Cay}{\mathrm{Cay}}
\DeclareMathOperator*{\limone}{\varprojlim\nolimits^1}
\DeclareMathOperator{\hotimes}{\hat{\otimes }}
\DeclareMathOperator{\ch}{\mathrm{ch}} 
\DeclareMathOperator{\K}{\mathrm{K}}
\DeclareMathOperator{\KR}{\mathrm{KR}}
\DeclareMathOperator{\KO}{\mathrm{KO}}
\DeclareMathOperator{\KK}{\mathrm{KK}}
\DeclareMathOperator{\KKR}{\mathrm{KKR}}
\DeclareMathOperator{\Hom}{\mathrm{Hom}}
\DeclareMathOperator{\ind}{\mathrm{ind}}
\DeclareMathOperator{\supp}{\mathrm{supp}}
\DeclareMathOperator{\Ad}{\mathrm{Ad}}
\DeclareMathOperator{\Res}{Res}
\newcommand{\Csep}{\mathfrak{C}^*\mathfrak{sep}}
\newcommand{\Kas}{\mathfrak{KK}}
\title[The relative higher index and almost flat bundles I]{The relative Mishchenko--Fomenko higher index and almost flat bundles I:\\ The relative Mishchenko--Fomenko index}
\author{Yosuke KUBOTA}
\address{iTHEMS Research Program, RIKEN, 2-1 Hirosawa, Wako, Saitama 351-0198, Japan}
\email{yosuke.kubota@riken.jp}
\date{}
\subjclass[2010]{Primary 19K56; Secondary 19K35, 46L80, 58J32.}
\keywords{Chang--Weinberger--Yu relative higher index, positive scalar curvature metric, almost flat bundle, $\KK$-theory.}
\begin{document}
\begin{abstract}
In this paper, the first of two, we introduce an alternative definition of the Chang--Weinberger--Yu relative higher index, which is thought of as a relative analogue of the Mishchenko--Fomenko index pairing. A main result of this paper is that our map coincides with the existing relative higher index maps.
We make use of this fact for understanding the relative higher index.  First, we relate the relative higher index with the higher index of amalgamated free product groups. Second, we define the dual relative higher index map and show its rational surjectivity under certain assumptions.
\end{abstract}

\maketitle
\tableofcontents

\section{Introduction}
This is the first of a series of two papers which aim to reveal a relation between the \emph{Chang--Weinberger--Yu relative higher index} and the Riemannian geometry of almost flat vector bundles on manifolds with boundary. 
The relative higher index is a new index theoretic invariant for spin manifolds with boundary introduced in \cite{mathKT150603859}. It is an obstruction to a collared Riemannian metric with positive scalar curvature. 
The aim of this paper is to give a new equivalent definition of this invariant by using Kasparov's KK-theory. In the sequel \cite{Kubota2} of this paper, we make use of our definition to relate the relative higher index with the geometry of hermitian vector bundles on Riemannian manifolds.

For a compact space $X$ with a reference map to the classifying space $B\Gamma$ of a countable discrete group $\Gamma$, the higher index map for $X$ is a homomorphism
\[ \mu^\Gamma_* \colon \K_*(X) \to \K_*(C^*(\Gamma)), \]
where $C^*\Gamma$ is the maximal group C*-algebra. When $\Gamma$ is torsion-free, the the higher index map for $B\Gamma $ is called the the Baum--Connes assembly map and conjectured to be an isomorphism. 
The higher index map is defined in several ways with different backgrounds, each of which has an advantage.
Kasparov's definition \cite{MR918241} using the descent functor in equivariant KK-theory enables us to define the assembly map with coefficients.
In coarse index theory \cite{MR1399087}, the higher index is realized as the boundary homomorphism of an exact sequence of C*-algebras and hence described without any use of $\KK$-theory.
The Mishchenko--Fomenko higher index \cite{MR548506}, a main subject of this paper, is defined as the Kasparov product with the element of $ \K_0(C(X) \otimes C^*(\Gamma ))$ represented by the Mishchenko line bundle, and hence we can define the dual assembly map
\[ \beta_{\Gamma} \colon \K^*(C^*\Gamma) \to \K^*(B\Gamma) \]
which is dual to $\mu_*^\Gamma$.

Let $\Lambda$ be another countable discrete group with a homomorphism $\phi \colon \Lambda \to \Gamma$. 
For a pair $(X,Y)$ of compact spaces with a reference map to $(B\Gamma, B\Lambda)$, the relative higher index is a homomorphism
\[\mu_*^{\Gamma,\Lambda} \colon \K(X,Y) \to \K_*(C^*(\Gamma , \Lambda)). \]
Here, the relative group C*-algebra $C^*(\Gamma, \Lambda)$ is defined as the suspension of the mapping cone of the homomorphism between maximal group C*-algebras. It was first defined by \cite{mathKT150603859} as a relative version of the coarse higher index. After that, Deeley and Goffeng gave an alternative definition in \cite{mathKT150701735,mathKT170508467} from the viewpoint of Baum--Douglas geometric K-homology and its variations developed in \cite{MR3519049}. 
In Section \ref{section:3}, we provide the third definition of the relative higher index; a relative version of the Mishchenko--Fomenko index pairing. 
Here we construct the `relative Mishchenko line bundle' as an element of the group $\K_0(C_0(X^\circ ) \otimes C^*(\Gamma, \Lambda))$, where $X^\circ$ is the interior $X \setminus Y$, and define the map as the Kasparov product with it. 
A main theorem of this paper, studied in Section \ref{section:3.5}, is that our definition actually coincides with the Chang--Weinberger--Yu and Deeley--Goffeng maps.

For the rest two sections, we display applications of our relative higher index map. First, in Section \ref{section:9} we clarify the relation between the relative higher index and the higher index of amalgamated free product groups. That is, for a manifold $M$ partitioned by a hypersurface $N$ as $M=M_1 \sqcup _N M_2$, we relate the relative higher indices of $M_1$ and $M_2$ with the higher index of $M$. A key observation is the $\KK$-equivalence of the group C*-algebra of the amalgamated free product with a mapping cone C*-algebra, which is essentially proved by Pimsner~\cite{MR860685}. This relation is useful to show the non-vanishing of from one to another. In particular, we consider two geometric situations. First, we get the non-vanishing of the higher index of $M$ from that of the hypersurface $N$ in a different approach to \cite{MR2670972}. Second, we discuss the invariance of the non-vanishing of the higher index under the cutting-and-pasting of a manifold along a hypersurface. 

Second, we study the dual relative higher index map. In the same way as the ordinal Baum--Connes assembly map, our definition enables us consider the dual
\[\beta_{\Gamma ,\Lambda} \colon \K^*(C^*(\Gamma, \Lambda)) \to \K^*(X,Y ) \]  
of the relative higher index map (Definition \ref{defn:dBC}).   In Section \ref{section:4}, we investigate the rational surjectivity of this map for the pair $(B\Gamma, B\Lambda )$, which is closely related to the rational injectivity of the relative assembly map.  The key ingredient of the proof is the Dirac--dual Dirac method~\cite{MR918241}. 
The results in this section will be used in the second part \cite{Kubota2} to study the subgroup of almost flat elements in the relative $\K^0$-group $\K^0(X,Y)$ under certain assumptions of fundamental groups.

\begin{notn}\label{notation}
Throughout this paper we use the following notations.
\begin{itemize}
\item For a C*-algebra $A$, let $A^+$ denote its unitization $A + \bC \cdot 1$. 
\item For a C*-algebra $A$, let $\cM(A)$ denote its multiplier C*-algebra and $\cQ(A):=\cM(A)/A$.
\item For a C*-algebra $A$ and $a<b \in \bR \cup \{ \pm \infty \}$,let $A(a,b):=A \otimes C_0(a,b)$. Similarly we define $A[a,b)$ and $A[a,b]$. For a Hilbert $A$-module $E$, let $E(a,b)$ denote the Hilbert $A(a,b)$-module $E \otimes C_0(a,b)$.
\item For a $\ast$-homomorphism $\phi \colon A \to B$, let $C\phi$ denote the mapping cone C*-algebra defined as
\[ C\phi = \{(a, b_s) \in A \oplus B[0,1) \mid \phi(a)=b_0 \}.\] 
\item For a Hilbert $A$-module $E$, let $\bB (E)$ and $\bK(E)$ denote the C*-algebra of bounded adjointable and compact operators on $E$ respectively.
\item We say that a Real C*-algebra is a C*-algebra equipped with an antilinear involutive $\ast$-isomorphism $a \mapsto \bar{a}$. An element $a \in A$ is said to be real if $\bar{a}=a$. In particular, let $S^{0,1}$ denote the Real C*-algebra $C_0(\bR)$ with the complex conjugation $\overline{f}(x):=\overline{f(-x)}$.
\item Let $A$ be an ungraded C*-algebra and let $E=E^0 \oplus E^1$ be a $\bZ_2$-graded Hilbert $A$-module. For an odd operator $F \in \bB(E)$, we write $F^0 \in \bB(E^0,E^1)$ and $F^1 \in \bB(E^1,E^0)$ for the operators such that 
\[
F=\begin{pmatrix} 0 & F^1 \\ F^0 & 0 \end{pmatrix}_{\textstyle .} 
\]
Note that $F^1=(F^0)^*$ if $F$ is self-adjoint.
\end{itemize}
\end{notn}

\subsection*{Acknowledgment}
The author would like to thank John Roe for his helpful discussion and encouragement. He also thank the anonymous referee for his careful reading of the paper and helpful advices. This work was supported by RIKEN iTHEMS Program.

\section{Preliminary on relative higher index maps}\label{section:2}
In this section, we review the existing two definitions of the relative higher index for self-consistency of the paper.  

\begin{notn}\label{not:Xr}
For a pair $(X,Y)$ of locally compact spaces with a deformation retract neighborhood $U$ of $Y$ (for example, a pair of finite CW-complexes or finite $\Gamma$-CW-complexes) and $r \in [0,\infty]$, set 
\begin{align*}
Y_r&:= \left\{ \begin{array}{ll} Y \times [0,r] & \text{for $r \in [0,\infty) $}, \\ Y \times [0,\infty) & \text{for $r = \infty$},  \end{array} \right. \\
X_r &:= X \sqcup _Y Y_r.
\end{align*}
For $r \in [1,\infty)$, let $Y_r':= Y \times [1, r] \subset X_\infty$. 
\end{notn}

Let $\Gamma$ and $\Lambda$ be countable discrete groups and let $\phi \colon \Lambda \to \Gamma$ be a group homomorphism. Then we have a continuous map $B\phi \colon B\Lambda \to B\Gamma $. Since both $B\Gamma$ and $B\Lambda$ have the homotopy type of CW-complexes, we may replace $B\Gamma$ with the mapping cylinder $B\Gamma \sqcup_{B\phi} B\Lambda [0,1]$ and $B\Lambda $ with $B\Lambda \times \{ 1 \}$ to assume that $B\phi$ is injective.
 
The domain of the relative higher index map is the K-homology or $\KO$-homology group of the pair $(X,Y)$ of finite CW-complexes with a reference map $(X,Y) \to (B\Gamma , B\Lambda )$, to which the pull-back Galois coverings $\Gamma \to \tilde{X} \to X$ and $\Lambda \to \tilde{Y} \to Y$ are associated. In this paper we employ the Baum--Douglas geometric K-homology among the equivalent definitions of the K-homology group. That is, an element of $\KO_i(X, Y)$ is represented by a triplet $(M,f,E)$, where 
\begin{itemize}
\item $M$ is a compact spin manifold of dimension $8n+i$ with the boundary $N$, 
\item $f \colon (M, N ) \to (X,Y)$ is a continuous map of pairs, and 
\item $E$ is a real vector bundle on $M$.
\end{itemize}
Similarly, a cycle in complex K-homology is represented by a triplet $(M,f,E)$, where $M$ is a $2n+i$-dimensional $\mathrm{spin}^c$ manifold with the boundary $N$, $f \colon (M,N) \to (X,Y)$ and $E$ is a complex vector bundle on $M$.
For equivalence relations between such triples, see \cite[Definition 5.6]{MR2330153} (see also \cite[Section 2]{MR2573141} for the $\KO$ case). 

The range of the relative assembly map is the $\K$-group (or $\KR$-group) of the relative group C*-algebra. Let $C^*_{\max}(\Gamma)$ denote the maximal group C*-algebra of $\Gamma$, that is, the completion of the group algebra $\bC[\Gamma]$ by the maximal C*-norm. It has the Real C*-algebra structure by the complex conjugation $\overline{\sum a_\gamma u_\gamma }:=\sum \overline{a}_\gamma u_\gamma$.  In this paper we use the same symbol $\phi$ for the induced $\ast$-homomorphism $C^*_{\max}(\Lambda) \to C^*_{\max}(\Gamma)$. 
\begin{defn}[{\cite[Section 2]{mathKT150603859}}]
Let $\phi \colon \Lambda \to \Gamma $ be a homomorphism of discrete groups. The maximal relative group C*-algebra is defined to be
\[
 C^*_{\max }(\Gamma, \Lambda):= S^{0,1} \otimes C\phi,
\]
where $S^{0,1}$ is the Real C*-algebra as in Notation \ref{notation}.
\end{defn}
Hereafter we omit the subscript ${\max}$ for simplicity of notations.

Let  
\[ \psi \colon SC^*(\Gamma) \to C\phi , \ \ \theta \colon C\phi \to C^*\Lambda \]
denote the inclusion $\psi(b_s):=(0,b_s)$ and the evaluation $\theta (a,b_s):=a$ respectively.
They induce the homomorphism of the Puppe exact sequence
\[ \cdots  \to  \KR_*(C^*\Lambda ) \xrightarrow{\phi _*  } \KR_*(C^* \Gamma ) \xrightarrow{\psi _*} \KR_*(C^*(\Gamma, \Lambda))\xrightarrow{\theta_*} \KR_{*-1}(C^*\Lambda )\to \cdots.
\]

The relative higher index is a map
\[ \mu_* = \mu ^{X,Y}_{*} \colon \KO_*(X,Y) \to \KR_*(C^*(\Gamma, \Lambda)) \]
which have the following properties.
\begin{enumerate}
\item[\eqnum \label{cond:funct}] It is functorial, that is, if we have a continuous map $\Phi \colon (X,Y) \to (X',Y')$ commuting with the reference maps, then $\mu ^{X',Y'} \circ \Phi_* = \mu ^{X,Y}$ holds. 
\item[\eqnum \label{cond:long}] The diagram 
\[
\xymatrix@C=0.7em{
\ar[r] &\KO_*(Y) \ar[r]^{i_*} \ar[d]^{\mu _{*}^Y} &\KO_*(X) \ar[r]^{j_*} \ar[d]^{\mu _*^{X}} & \KO_*(X,Y) \ar[r]^{\partial } \ar[d]^{\mu _*^{X,Y}} & \KO_{*-1}(Y) \ar[r]^{i_*} \ar[d]^{\mu_{*-1}^Y}&\KO_{*-1}(X) \ar[r] \ar[d]^{\mu_{*-1}^X} & \\
\ar[r]&\KR_*(C^*\Lambda )\ar[r]^{\phi _*  } &\KR_*(C^* \Gamma ) \ar[r]^{\psi _* \ \ \ \ } &\KR_*(C^*(\Gamma, \Lambda))\ar[r]^{\theta_*} &\KR_{*-1}(C^*\Lambda )\ar[r]^{\phi_*} &\KR_{*-1}(C^*\Gamma) \ar[r] &
}
\]
commutes.
\end{enumerate}
By the property (\ref{cond:funct}), it defines the relative assembly map 
\[
 \mu^{\Gamma, \Lambda}_* \colon \KO_*(B\Gamma, B\Lambda) \to \KR(C^*(\Gamma, \Lambda)), 
\]
where 
\[\KO_*(B\Gamma, B\Lambda):=\varinjlim _{(X,Y) \subset (B\Gamma , B\Lambda )} \KO_*(X,Y). \]
Hereafter we simply write the relative higher index map as $\mu_*$ or $\mu_*^{\Gamma, \Lambda}$ instead of $\mu_*^{X,Y}$.

\begin{rmk}\label{rmk:degree}
In this paper we focus on the higher index map $\mu_0^{\Gamma, \Lambda}$ of degree zero, defined for $\KO$-cycles represented by $8n$-dimensional spin manifolds. 
For other degrees, we define the higher index map $\mu^{\Gamma, \Lambda}_*$ by the composition
\begin{align*}
\KO_{-k}(B\Gamma  , B\Lambda ) \xrightarrow{\otimes [\bT^k]} & \KO_0(B(\Gamma \times \bT^k) ,B(\Lambda  \times \bT^k))\\
 \xrightarrow{\mu_0^{\Gamma \times \bZ^k, \Lambda \times \bZ^k}} & \KR_0(C^*(\Gamma \times \bZ^k, \Lambda \times \bZ^k))\\
 \xrightarrow{ \otimes_{C^*(\bZ^k)} \beta_{\bZ^k}} &\KR_{-k}(C^*(\Gamma, \Lambda)),
\end{align*}
where $\beta_{\bZ^k} \in \KKR_{-k}(C^*\bZ^k, \bR)$ is the top-degree element. That is,
\[  \mu_{-k}^{\Gamma, \Lambda }([M,f,E]):= \mu_0^{\Gamma \times \bZ^k, \Lambda \times \bZ^k }([M \times \bT^k, f \times \id_{\bT^k}, E ] ) \otimes_{C^*(\bZ)} \beta_{\bZ^k}. \]
The Kasparov product in the right hand side is taken through the identification $C^*(\Gamma \times \bZ^k, \Lambda \times \bZ^k) \cong C^*(\Gamma, \Lambda) \otimes C^*\bZ^k$.
\end{rmk}

\subsection{Coarse geometric definition by Chang--Weinberger--Yu}\label{section:2.1}
The first definition of the relative higher index is given by Chang--Weinberger--Yu~\cite{mathKT150603859} by using coarse index theory. 
Here we start with a brief summary of the coarse index theory. The basic references are \cite{MR1817560,MR1451759,MR2568691}.

Let $Z$ be a Riemannian manifold equipped with a regular Borel measure such that any non-zero function $f\in C_c(Z)$ acts on $L^2(Z)$ noncompactly. For an operator $T \in \bB(L^2(Z))$, its support $\supp T \subset Z^2$ is the complement of the union of $\supp f \times \supp g$, where $(f,g)$ runs over all pairs of compactly supported functions on $Z$ such that $gTf=0$. An operator $T \in \bB(L^2(Z))$ is said to be
\begin{itemize}
\item of finite propagation if $\mathop{\mathrm{Prop}} (T) := \sup \{ d(x,y) \mid (x,y) \in \supp T\}$ is finite,  
\item locally compact if $Tf, fT \in \bK( L^2(Z))$ for any $f \in C_c(Z)$.
\end{itemize}

For a Riemannian manifold $M$ with a $\Gamma$-Galois covering $\tilde{M}$ and $N \subset M$, we define four C*-algebras $C^*(\tilde{M})^\Gamma$, $C^*_L(M)$, $C^*_L(\tilde{M})^\Gamma$ and $C^*_L(N \subset M)$ as follows.
\begin{itemize}
\item The $\Gamma$-invariant maximal Roe algebra $C^*_{\max}(\tilde{M})^\Gamma $ is the closure of the Real $\ast$-algebra $\bC [\tilde{M}]^\Gamma$ of $\Gamma$-invariant locally compact operators on $L^2(\tilde{M})$ of finite propagation by the maximal norm $\| {\cdot}\|_{\max}$ among the C*-norms on it.  
\item The (resp.\ $\Gamma$-invariant) maximal localization algebra $C^*_{L, {\max}}(M)$ (resp.\ $C^*_{L, {\max}}(\tilde{M})^\Gamma )$ is the closure of the $\ast$-algebra $\bC_L[M]$ (resp.\ $\bC_L[\tilde{M}]^\Gamma $) of all uniformly continuous families $(T_t)_{t \in \bR_{\geq 0}}$ of (resp.\ $\Gamma$-invariant) locally compact operators on $M$ (resp.\ $\tilde{M}$) with $\mathop{\mathrm{Prop}} (T_t) \to 0$ as $t \to \infty$. 
\item For a subset $N \subset M$, we define the ideal $C^*_L(N \subset M)$ of $C^*_L(M)$ as the closure of the $\ast$-ideal $\bC_L [N \subset M]$ of $\bC_L[M]$ consisting of $T \in \bC_L[M]$ with $d(N^2, \supp T_t) \to 0$ as $t \to \infty$. 
\end{itemize}
These C*-algebras are equipped with the Real structure induced from that of the Hilbert space $L^2(\tilde{M})$.
Hereafter we omit the subscripts `max' for simplicity of notations.

Let $(M,f,E)$ be a representative of an element of $\KO _0(X,Y)$. We fix a Riemannian metric $g$ on $M_\infty$ whose restriction to $N_\infty$ is a product metric $dr^2 + g_N$. 
We introduce some homomorphisms between coarse C*-algebras arising from $(M_\infty , g)$. 
Note that an element $T \in \bC[\tilde{M}_1]^\Gamma$ is represented by a kernel function $T(x,y)$, which is a $\Gamma$-invariant Borel function on $\tilde{M} \times \tilde{M}$.
\begin{itemize}
\item Take a Borel section $M_2 \to \tilde{M}_2$ and let $\Sigma$ denote its image. Then we get a $\Gamma$-equivariant unitary $V \colon L^2(\tilde{M}_2) \to L^2(\Sigma) \otimes \ell^2 (\Gamma)$ which induces an isomorphism
\[ \zeta_{\tilde{M}_2}:=\Ad (V) \colon  C^*\Gamma \otimes \bK(L^2(\Sigma)) \to C^*(\tilde{M}_2)^\Gamma.  \]
\item Let $\bar{N}_2$ denote the restriction of the $\Gamma$-covering $\tilde{M}_2$ to $N_2$ and let $\bar{\pi} \colon \tilde{N}_2 \to \bar{N}_2$ denote the projection. The composition 
\[\tilde{\phi} := \zeta_{\bar{N}_2} \circ (\phi \otimes \id ) \circ \zeta_{\tilde{N}_2}\]
gives a $\ast$-homomorphism from $C^*(\tilde{N}_2)^\Lambda$ to $C^*(\bar{N}_2)^\Gamma$. The kernel function of $\tilde{\phi}(T)$ is written as
\[ \tilde{\phi}(T)(x,y) = \sum _{\bar{\pi} (\tilde{y})=y} T(\tilde{x}_0, \tilde{y}), \text{ a.e. $(x,y) \in \bar{N}_2 \times \bar{N}_2$} \]
where $\tilde{x}_0$ is an arbitrary choice of a point  of $\bar{\pi}^{-1}(x)$.  
Thanks to this description, we also get $\tilde{\phi}_L \colon C^*_L(\tilde{N}_2)^\Lambda \to C^*_L(\bar{N}_2)^\Gamma$ (cf.\  \cite[(2.13)]{mathKT150603859}).
\item We write the inclusions as 
\[ h \colon C^*(N_2) \to C^*(M_2),  \ \ \ \   h_L \colon C^*_L(N_2) \to C^*_L(M_2).\]
We also write the composition of similar inclusions of invariant Roe algebras with $\tilde{\phi}$ and $\tilde{\phi}_L$ as 
\[ \tilde{h} \colon C^*(\tilde{N}_2)^\Lambda \to C^*(\tilde{M}_2)^\Gamma , \ \ \ \  \tilde{h}_L \colon C^*_L(\tilde{N}_2)^\Lambda  \to C^*_L(\tilde{M}_2)^\Gamma .\]
We also write $h_L'$ for the inclusion $C^*_L(N_2' \subset M_2) \to C^*_L(M_2)$. 
\item The map $T_t \mapsto T_{t + \kappa}$ gives rise to an asymptotic morphism from $C^*(N_2' \subset M_2)$ to $C^*(N_2)$. Hence we get an asymptotic morphism 
\[ \tau _\kappa \colon Ch_L' \to Ch_L.\]  
\item Let $\pi \colon \tilde{M}_2 \to M_2$ denote the projection, let $R$ denote the injectivity radius of $M_2$ and let $0 < \varepsilon < R/2$. For $T \in \bB (L^2(M_2))$ with $\mathrm{Prop} (T) < \varepsilon$, the kernel function $T(x, y)$ uniquely lifts to a $\Gamma$-invariant function $\tilde{T}(x,y)$ on $\tilde{M}_2 \times \tilde{M}_2$ with the property that $\mathrm{Prop}(\tilde{T}) < \varepsilon$ and $\tilde{T}(x,y)=T(\pi (x),\pi(y))$ for any $x, y \in \tilde{M}_2$ with $d(x,y)<\varepsilon$. 
Now, $\theta_\kappa \colon T_t \mapsto \tilde{T}_{t+ \kappa }$ gives rise to an asymptotic morphism 
\[ \theta_\kappa \colon C^*_L(M_2) \to C^*_L(\tilde{M}_2)^\Gamma.\]
\end{itemize}

\begin{rmk}\label{rmk:misc}
We give some remarks on these C*-algebras and homomorphisms.
\begin{enumerate}
\item Let $D_\mathrm{alg} (\tilde{M}_r)^\Gamma$ denote the $\ast$-algebra of finite propagation operators $T$ on $L^2(\tilde{M}_r)$ which is pseudo-local, that is, $[T, f]$ is a compact operator for any $f \in C_c(\tilde{M}_r)$. It includes $\bC [\tilde{M}_r]^\Gamma$ as an algebraic ideal. It is proved in \cite[Lemma 2.16]{MR2568691} that an algebraic action $x \mapsto Tx$ is extended to a bounded adjointable endomorphism on $C^*(\tilde{M}_r)^\Gamma$. Hence $D_\mathrm{alg}(\tilde{M}_r)^\Gamma$ is identified with a subalgebra of $\cM(C^*(\tilde{M}_r)^\Gamma )$. 
\item Since the inclusion $C^*(\bar{N}_2)^\Gamma \to C^*(\tilde{M}_2)^\Gamma $ induces the isomorphism of $\KR$-groups, so is the $\ast$-homomorphism 
\[ \zeta_\phi  \colon C\phi \otimes \bK(L^2(\Sigma )) \to C\tilde{\phi}  \subset C\tilde{h}\]
given by $\zeta_\phi (a, b_s) = (\zeta _{\tilde{N}_2}(a), \zeta_{\bar{N}_2}(b_s))$.

\item By the construction of $\theta $ and $h_L$, the diagram 
\[
\xymatrix{
C^*_L (N_2) \ar[r]^{h_L} \ar[d]^{\theta_s^{N_2}} & C^*_L (M_2) \ar[d]^{\theta_s^{M_2}} \\ 
C^*_L (\tilde{N}_2) ^\Lambda \ar[r]^{\tilde{h}_L} & C^*_L(\tilde{M}_2)^\Gamma 
}
\]
commutes. Hence we get an asymptotic morphism 
\[ \chi _s \colon Ch_L \to C\tilde{h}_L.\]
\item Let $F$ be a $0$-th order pseudo-differential operator with $\mathrm{Prop}(F)<\varepsilon$. Then, its lift $\tilde{F}$ is also a $0$-th order pseudo-differential operator on $\tilde{M}_r$ and their principal symbols are related as $\sigma (\tilde{F})=\pi^* \sigma (F)$. 
\end{enumerate}
\end{rmk}

Finally we describe the definition of $\mu ^{\CWY}_0$. Let $S=S^0 \oplus S^1$ denote the spinor bundle on $M_\infty$ and let $S_E:=S \otimes E$. Let
\[D \colon C^\infty _c(M_{\infty}, S_E) \to C^\infty_c(M_\infty , S_E) \]
denote the Dirac operator on $M_\infty$ twisted by $E$. Since $M_\infty$ is a complete Riemannian manifold, there is a unique self-adjoint extension (see \cite[Theorem 1.17]{MR720933}) presented by the same letter $D$.
Choose an odd function $\chi $ on $\bR $ whose Fourier transform $\hat{\chi}$ is smooth at $x \neq 0$, $\supp \hat{\chi} \subset [-1,1] $ and $\hat{\chi}(x)=x^{-1}$ for $x \in [-1/2,1/2]$. Then, the operator
\begin{align} F_t := \chi (tD)= \frac{1}{2\pi } \int_{u \in \bR} \hat{\chi}(u)e^{iutD}du  \label{form:Ft}\end{align}
has propagation less than $t^{-1}$ by \cite[Proposition 2.2]{MR996446}. 

Let us fix a Borel isomorphism $S_E^0 \cong \bR^N_{M_r} \cong S_E^1$ to identify $L^2(M_r , S_E^0) \cong L^2(M_r )^{\oplus N} \cong L^2(M_r , S_E^1)$ as $\ast$-representations of $C_0(M_r)$. 
Then $F_t$ is regarded as an element of $\bM_{2N}(\cM(C^*_L(M_\infty )))$ by Remark \ref{rmk:misc} (1) and $(F_t^0)^*F_t^0 -1$, $F_t^0(F_t^0)^* -1$ are in $\bM_N(C^*_L(M))$, where $F_t^0$ is as in Notation \ref{notation}. That is, the image of $F_t^0$ in the Calkin algebra $\cQ(C^*_L(M_\infty))$ is a unitary.

Let $\ev _0 \colon C \tilde{h}_L \to C\tilde{h}$ denote the evaluating $\ast$-homomorphism at $t=0$, let
\begin{align*}
q_1 &\colon C^*_L(M_\infty ) \to C^*_L(M_\infty)/C^*_L(N_\infty) \\
q_2 &\colon Ch_L'  \to \frac{C h_L'}{C (C^*_L(N'_2 \subset M_2))} \cong S\frac{C^*_L(M_2)}{C^*_L (N'_2 \subset M_2)} 
\end{align*}
denote the quotients and let 
\[ j_1 \colon C^*_L(M_2)/C^*_L(N_2'\subset M_2) \to C^*_L(M_\infty)/C^*_L(N'_\infty \subset M_\infty ) \]
denote the isomorphism induced from the inclusion $C^*_L(M_2) \to C^*_L(M_\infty)$.

\begin{defn}[{\cite[Section 2]{mathKT150603859}}]
The Chang--Weinberger--Yu relative higher index $\mu ^\CWY_0$ is given by
\[ \mu ^\CWY _0([M,f,E]) := ((\zeta_\phi)_*^{-1} \circ \ev_0  \circ \chi \circ \tau \circ   (q_2)_*^{-1} \circ \beta \circ (j_1)_*^{-1} \circ (q_1)_*) (\partial [F_t^0]). \]
\end{defn}
Note that this homomorphism is independent of the choice of a regular Borel measure on $M$ such that any $f \in C_0(M)$ acts on $L^2(M)$ noncompactly. It is commented in \cite[Remark 3.9]{mathKT150701735} that $\mu^\CWY_0$ is well-defined as a homomorphism from the geometric K-homology group. It also follows from Theorem \ref{thm:equal} below.
According to \cite[Theorem 2.18]{mathKT150603859}, this is an obstruction for $(M,N)$ to admit a Riemannian metric with positive scalar curvature collared at $N$.

\subsection{Geometric definition by Deeley--Goffeng}\label{section:2.2}
In \cite{mathKT150701735}, Deeley and Goffeng introduce an alternative definition of the relative higher index within the framework of the geometric K-homology with coefficient in mapping cone C*-algebras \cite{MR3519049}. Here, the assembly map is defined as a homomorphism to the group $\K_*^\geom (\pt , C\phi)$, which is identified with $\K_*(C^*(\Gamma, \Lambda))$ by using the higher Atiyah--Patodi--Singer index theory (see for example \cite{MR1601842,MR1979016}). 

In summary, their relative higher index $\mu ^\DG_*$ is defined as following. Let $(M,f,E)$ be a representative of an element of $\KO_0(X,Y)$. Let 
\[ \cV := f^*(\tilde{X} \times_\Gamma C^*\Gamma)\]
be the Mishchenko line bundle over $M_\infty$. Let $S=S^0 \oplus S^1$ denote the spinor bundle of $M$ and set $S_{E,\cV}  := S \otimes E \otimes \cV $. Then, the twisted Dirac operator 
\[D_{\cV } \colon C_c^\infty (M_\infty ,  S_{E,\cV}  ) \to C_c^\infty(M_\infty , S_{E,\cV} )\]
is uniquely extended to an odd regular self-adjoint operator on the Hilbert $C^*(\Gamma)$-module $L^2(M_\infty , S_{E,\cV} )$ by \cite[Theorem 2.3]{MR3449594}. 

We also define the Mishchenko line bundle 
\[ \cW := (f|_N)^*(\tilde{Y} \times _\Lambda C^*\Lambda)\]
over $N$ and the odd Dirac operator 
 \[ \bD_{\cW} \colon C_c^\infty(N, S_{E,\cW }^0 ) \to C_c^\infty(N ,S_{E,\cW}^0 ),\]
which is uniquely extended to a regular self-adjoint operator on $L^2(N, S_{E,\cW} ^0 )$. 
Here the restriction of $S^0$ to $N$ is identified with the spinor bundle of $N$. Similarly we also define the odd Dirac operator $\bD_{\cV} $ acting on $L^2(N, S_{E,\cV}^0)$.
Note that the canonical isomorphism $\cW \otimes _{\phi} C^*\Gamma \cong \cV |_N$ identifies $\bD_{\cW } \otimes_\phi 1$ with $\bD_{\cV} $. 

As is shown in \cite[Theorem 3]{MR1979016} and \cite[Proposition 10]{MR1601842}, there is a smoothing operator $C \in \Psi ^{-\infty} (N, S_{E,\cV})$ in Mishchenko--Fomenko calculus such that $\bD_{\cV}  + C$ is invertible. Let $\eta$ be the smooth function supported on $M_1^\circ$ such that $0 \leq \eta \leq 1$ and $\eta \equiv 1 $ on $M$. Now, $\tilde{C}:=(1-\eta ) c(v)C$ determines an odd bounded operator on $L^2(M_\infty , S_{E,\cV} )$ and $D_\cV +\tilde{C}$ is a Fredholm operator. 
Set 
\[ F_{\cV, \tilde{C}} := \chi (D_\cV  + \tilde{C}) \in \bB(L^2(M_\infty , S_{E,\cV})). \]
Then the image of $F_{\cV, C}^0$ in the Calkin algebra $\cQ(L^2(M_\infty , S_{E,\cV}))$ is a real unitary and hence determine an element 
\[ [F_{\cV , C}^0] \in \KR_1(\cQ(L^2(M_\infty , S_{E,\cV}))).\]
Now we define the $b$-index as
\[\ind _b (D_\cV ,C):= \partial [F_{\cV, \tilde{C}}^0] \in \KR_0(\bK(L^2(M_\infty , S_{E,\cV}))) \cong \KR_0(C^*\Gamma ). \]

We also define the Hilbert $C\phi$-module bundle 
\[ \cX:=(f|_N)^*(\tilde{Y} \times _\Lambda C\phi) \]
over $N$, where $\gamma \in \Lambda$ acts on $C\phi$ by multiplication of $(u_\gamma, u_{\phi(\gamma)}) \in \cM(C\phi)$ from the left. Then the compact operator algebra $\bK(L^2(N, S_{E, \cX}))$ is canonically isomorphic to the mapping cone $\dot{\phi}$, where 
\[ \dot{\phi}:= {\cdot} \otimes _\phi 1 \colon \bK(L^2(N ,  S_{E, \cW})) \to \bK(L^2(N, S_{E,\cV} )). \]
Therefore, $C\dot{\phi}$ has the same $\KR$-groups with $C\phi$. 

Let $\bD_{\cV, C}(s) := (1-s)^{-1}(\bD_\cV + sC)$ for $s \in [0,1)$ and let $\Cay(x)=\frac{ix+1}{ix-1}$ denote the Cayley transform. Then $(\Cay (\bD_\cW ), \Cay(\bD_{\cV, C}(s))$ is a transpose-invariant unitary of $(C\dot{\phi})^+$, which determines an element of $\KR_{-1}(C\dot{\phi})$ by Remark \ref{rmk:KR-1}. Now we define
\[ c(\bD_\cW , C) := [(\Cay (\bD_\cW ), \Cay(\bD_{\cV, C}(s))] \in \KR_{-1}(C \dot{\phi} ) \cong \KR_{-1}(C\phi). \]
\begin{defn}[{\cite[Theorem 4.9]{mathKT150701735}}]
The Deeley--Goffeng relative assembly map is defined to be
\[\mu^\DG_0 ([M,f,E])= \psi_* (\ind _b (D_{\cV} , C)) + c(\bD_\cW , C). \]
\end{defn}

\section{The relative Mishchenko--Fomenko higher index}\label{section:3}
In this section we give a new definition of the relative higher index It is thought of as a relative version of the Mishchenko--Fomenko higher index~\cite{MR548506} in the sense that it is realized as a Kasparov product in KK-theory. 

Let $(X,Y)$ be a pair of finite CW-complexes with a reference map $f \colon (X,Y) \to (B\Gamma , B\Lambda)$. Throughout this section, we use the notations $X_r$, $Y_r$ and $Y'_r$ introduced in Notation \ref{not:Xr}.
As in Subsection \ref{section:2.2},  let $\cV  := \tilde{X}\times _\Gamma C^*\Gamma$, $\cW :=\tilde{Y} \times _\Lambda C^*\Lambda$ and $\cX := \tilde{Y} \times _\Lambda C\phi$.  Note that the inclusion $\psi \colon SC^*\Gamma \to C\phi$ induces a fiberwise inclusion of bundles $\psi_Y \colon S\cV|_Y \to \cX$. 
Set
\begin{align}
\begin{split}
\sE_r &:= SC(X, \cV) \oplus _{C(Y, \cX)}C_0(Y \times [0,r) , \cX ) \\
&=\{ (\xi ,\eta ) \in C(X, S\cV) \oplus C_0(Y \times [0,r) , \cX ) \mid \psi_Y(\xi |_{Y}) = \eta|_{Y \times \{ 0 \} }  \}.
\end{split}
\label{form:E}
\end{align}
In other words, $\sE_r$ is the section space $C_0(X_r^\circ , \cE)$ of a bundle (more precisely, an upper semi-continuous field in the sense of \cite[Definition A.1]{MR2119241}) of Hilbert $C\phi$-modules $\cE$ over $X_r$ constructed by clutching $\cX \to Y_r$ with $S\cV \to X$ at $Y$ by $\psi_Y$.

Let 
\begin{align} 
\rho (r,s) = \rho_s(r) := \min \{ 1 , 2s+2r-3 \} \in C([1,2] \times [0,1]). \label{form:rho}\end{align}
We regard it as a continuous function on $X_\infty \times [0,1]$ by $\rho(x,s):=2s-1$ for $x \in X_1$, $\rho(y,r,s):=\rho(r,s)$ for $(r,y) \in Y_2' $ and $\rho(y,r,s)=1$ for $r>2$.
\begin{figure}[t]
\centering 
\begin{tikzpicture}[scale=0.8]
\shade [top color = black!90!white, bottom color = black!90!white, middle color = white, shading angle = 135] (-2,-2) rectangle (4,4);
\shade [top color = black!42!white, bottom color = black!42!white, middle color = white] (-2,0) rectangle (0,4);
\fill [color = black!42!white] (4,0) -- (4,4) -- (0,4) -- (4,0);
\fill [color = white] (-2.1,-2.1) rectangle (4.1,0);
\draw [->] (-3,-0.3) -- (-3,4.5);
\draw [->] (-1,-1) -- (4.5,-1);
\draw [thick,dashed] (-2,4) -- (4,4);
\draw [thick, dashed] (4,4) -- (4,0);
\draw [thick] (4,0) -- (0,0);
\draw (0,0) -- (0,4);
\draw [thick,dashed] (0,0) -- (-2,0);
\node at (4.3,-0.7) {$r$};
\node at (-2.2,4.3) {$s$};
\node at (-1,4.5) {$X_1$};
\node at (2,4.5) {$Y_2'$};
\fill (4,-1) coordinate (O) circle[radius=1pt] node[below=2pt]  {$2$};
\fill (0,-1) coordinate (O) circle[radius=1pt] node[below=2pt]  {$1$};
\fill (-3,4) coordinate (O) circle[radius=1pt] node[left=2pt]  {$1$};
\fill (-3,0) coordinate (O) circle[radius=1pt] node[left=2pt]  {$0$};
\end{tikzpicture}
\caption{The shading shows the value of $|\rho (r,s)|$.} \label{fig:1}
\end{figure}
Then the multiplication by $\rho$, that is,
\[ \rho \cdot  (\xi, \eta) = (\rho |_X \cdot \xi , \rho|_{Y \times [0,r)} \cdot \eta ) \text{ for $(\xi,\eta) \in \sE_r$},   \]
determines a self-adjoint element $\rho \in \bB (\sE_r  )$. It satisfies $\rho ^2-1 \in \bK(\sE_r )$ since the function $\rho^2-1$ vanishes at $\{s=1\}$, $\{ s=0 \} \cap \{ r\leq 1 \} $ and $\{r>2 \}$ (the dotted lines in Figure \ref{fig:1}). 
Now we define the \emph{relative Mishchenko line bundle} $\ell_{\Gamma, \Lambda}$ as 
\[ \ell_{\Gamma, \Lambda}=\ell_{\Gamma, \Lambda}^{X,Y} :=[ \sE_\infty , 1, \rho ] \in \KKR_{-1}(\bR , C_0(X_\infty ) \otimes C\phi).\]
Recall that the group $\KKR_{-1}(\bR , C_0(X_\infty ) \otimes C\phi)$ is canonically isomorphic to $\KKR (\bR , C_0(X^\circ ) \otimes C^*(\Gamma, \Lambda))$.

\begin{defn}
The \emph{relative Mishchenko--Fomenko higher index} $\mu^\MF_*$ is defined by the Kasparov product
\[ \ell_{\Gamma, \Lambda } \hotimes_{C_0(X^\circ)} { \cdot }\  \colon \KKR_*(C_0(X^\circ ), \bR ) \to \KR_*(C^*(\Gamma, \Lambda )). \] 
\end{defn}
We also use the symbol $\alpha_{\Gamma, \Lambda}$ or $\alpha_{\Gamma, \Lambda}^{X,Y}$ for the homomorphism $\mu_*^\MF$.

\begin{lem}
The Mishchenko--Fomenko relative higher index $\mu^\MF_*$ satisfies (\ref{cond:funct}) and (\ref{cond:long}) in page \pageref{cond:funct}. 
\end{lem}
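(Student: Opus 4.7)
The approach will be to verify each axiom by reducing to a naturality property of the relative Mishchenko line bundle together with associativity of the Kasparov product.

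For the functoriality axiom~\textup{(\ref{cond:funct})}, the first step is to show naturality of the relative Mishchenko line bundle itself. Given a map of pairs $\Phi \colon (X,Y) \to (X',Y')$ commuting with the reference maps to $(B\Gamma, B\Lambda)$, extending by the identity on the collar produces a proper map $\Phi_\infty \colon X_\infty \to X'_\infty$ and hence a $\ast$-homomorphism $\Phi_\infty^* \colon C_0(X'_\infty) \to C_0(X_\infty)$. Because the bundles $\cV$, $\cW$, $\cX$ are pulled back from the universal covers over $B\Gamma$ and $B\Lambda$, there is a canonical isomorphism $\Phi_\infty^* \sE_\infty^{X',Y'} \cong \sE_\infty^{X,Y}$ of Hilbert $C_0(X_\infty)\hotimes C\phi$-modules that intertwines the multiplication operators $\rho$. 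Read in $\KK$-theory this is the equality $[\Phi_\infty^*] \hotimes_{C_0(X'_\infty)} \ell_{\Gamma,\Lambda}^{X',Y'} = \ell_{\Gamma,\Lambda}^{X,Y}$, and \textup{(\ref{cond:funct})} falls out from associativity of the Kasparov product.

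For~\textup{(\ref{cond:long})} the three squares will be handled separately. The leftmost square involves only the absolute Mishchenko--Fomenko maps and follows from the classical isomorphism $\cW \otimes_\phi C^*\Gamma \cong \cV|_Y$, which implements naturality of the absolute index in the group homomorphism $\phi$. For the middle and right squares the key structural input is the short exact sequence of Hilbert $C_0(X_\infty)\hotimes C\phi$-modules
\[
0 \longrightarrow SC(Y, \cX) \longrightarrow \sE_\infty \longrightarrow C(X, S\cV) \longrightarrow 0,
\]
whose right map is restriction to $X\subset X_\infty$ and whose kernel consists of sections supported on the collar $Y\times(0,\infty)$. On the quotient the operator $\rho$ degenerates to $-1$, identifying the pushforward of $\ell^{X,Y}_{\Gamma,\Lambda}$ to $X$ with $\psi$ applied to the absolute Mishchenko line bundle $\ell^{X}$; on the suspended subobject $\rho$ interpolates to $+1$, identifying the corresponding piece with the image of $\ell^{Y}$ under the inclusion $C^*\Lambda\hookrightarrow C\phi$. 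Combining these restrictions with naturality of the six-term exact sequence attached to the mapping-cone SES $0\to SC^*\Gamma \to C\phi \to C^*\Lambda \to 0$ will yield both remaining squares.

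The main obstacle I anticipate is tracking the canonical identification $\KKR_{-1}(\bR, C_0(X_\infty) \otimes C\phi) \cong \KKR(\bR, C_0(X^\circ) \otimes C^*(\Gamma,\Lambda))$ throughout these reductions, and verifying --- possibly via an explicit unbounded Kasparov representative for $\ell_{\Gamma,\Lambda}$ --- that the K-theoretic boundary $\theta_*(\ell_{\Gamma,\Lambda}^{X,Y})$ matches the composition of the topological boundary $\partial$ on K-homology with the absolute line bundle $\ell^{Y}$, up to the expected degree shift and sign.
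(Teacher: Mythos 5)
Your treatment of (\ref{cond:funct}) is exactly the paper's argument (extend $\Phi$ to $\Phi_\infty$ and use $\Phi_\infty^*\ell^{X',Y'}_{\Gamma,\Lambda}=\ell^{X,Y}_{\Gamma,\Lambda}$ plus associativity), and delegating the outer squares of (\ref{cond:long}) to the absolute theory is also fine. The problem is the two squares involving the relative map. A first inaccuracy: on the quotient $C(X,S\cV)$ of your module extension the operator $\rho$ does not ``degenerate to $-1$''; on $X$ it is the Bott operator $2s-1$, and that is precisely why restricting $\ell_{\Gamma,\Lambda}$ along $\iota\colon C_0(X_1^\circ)\to C(X)$ gives $(\beta\otimes\ell_\Gamma)\hotimes_{SC^*\Gamma}[\psi]$ rather than a degenerate class; with that correction the $j_*$/$\psi_*$ square can be extracted from your sketch. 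The boundary square cannot: you identify the collar piece with ``the image of $\ell^{Y}$ under the inclusion $C^*\Lambda\hookrightarrow C\phi$'', but there is no such $\ast$-homomorphism --- in the extension $0\to SC^*\Gamma\to C\phi\xrightarrow{\;\theta\;}C^*\Lambda\to 0$ the map to $C^*\Lambda$ is the surjection $\theta$, and the subobject $C_0(Y\times(0,\infty),\cX)$ of your sequence has fibres $C\phi$, not $C^*\Lambda$. The class of $\ell_\Lambda$ only appears after pushing forward along $\theta$, which kills the sections over $X$ (since $\theta\circ\psi=0$) and converts $\cX$ into $\cW$. The statement you actually need --- and which you explicitly defer as your ``anticipated obstacle'' --- is the identity $\ell_{\Gamma,\Lambda}\hotimes_{C\phi}[\theta]=[C_0(Y_2^\circ,\cW),1,\rho_0(r)]=(\ell_\Lambda\otimes\beta)\hotimes_{C_0(Y_1^\circ)}[i^*]$; without it the $\partial$/$\theta_*$ square is unproved.

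A second, structural issue is the mechanism you propose. Your exact sequence is one of Hilbert modules, not of C*-algebras, so it has no six-term sequence of its own; moreover the submodule $C_0(Y\times(0,\infty),\cX)$ is not orthogonally complemented in $\sE_\infty$, so the class $[\sE_\infty,1,\rho]$ does not simply split into a sub piece and a quotient piece, and ``naturality of the six-term sequence for $0\to SC^*\Gamma\to C\phi\to C^*\Lambda\to 0$'' does not by itself connect the topological boundary $\partial$ on $\KO_*(X,Y)$ with $\theta_*$. The paper avoids all of this: it verifies the two squares by computing two Kasparov products with $\ast$-homomorphism classes, namely $\ell_{\Gamma,\Lambda}\hotimes_{C_0(X_1^\circ)}[\iota]=(\beta\otimes\ell_\Gamma)\hotimes_{SC^*\Gamma}[\psi]$ and the identity displayed above, and then both squares follow from associativity alone, because $j_*$, $\psi_*$, $\theta_*$ and (up to the suspension identification $C_0(Y_2^\circ)\cong SC(Y)$) $\partial$ are themselves Kasparov products with $[\iota]$, $[\psi]$, $[\theta]$ and $[i^*]$. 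Replacing your extension-and-naturality step by these two explicit computations is the concrete fix.
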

\begin{proof}
Assume that we have a continuous map $\Phi \colon (X,Y) \to (X',Y')$. Let us extend it to a continuous map $\Phi_\infty \colon X_\infty \to X_\infty'$. By definition we have $\Phi_\infty^*\ell_{\Gamma, \Lambda}^{X',Y'}=\ell_{\Gamma, \Lambda}^{X,Y}$, which implies the functoriality (\ref{cond:funct}).

To see (\ref{cond:long}), recall that the higher index map $\mu_*^\Gamma$ is given by the Kasparov product with $\ell_\Gamma := [C(X, \cV),1, 0]$ (see \cite{MR3359022}). Then the commutativity of the diagram follows from 
\begin{align*}
(\beta \otimes \ell_\Gamma) \hotimes _{SC^*(\Gamma )} [\psi] &= [SC(X,\cV), 1 , 2s-1]= \ell_{\Gamma ,\Lambda} \hotimes _{C_0(X^\circ _1)} [\iota ], \\
\ell_{\Gamma, \Lambda } \hotimes _{C \phi } [\theta ] &= [ C_0(Y_2^\circ ,\cW), 1 , \rho_0(r)]=  (\ell_\Lambda \otimes \beta ) \hotimes _{C_0(Y_1^\circ)} [i^*], 
\end{align*}
where $\iota \colon C_0(X_1^\circ) \to C(X)$ denote the restriction and $i^* \colon C_0(Y_1^\circ) \to C_0(X_1^\circ)$ denote the open embedding. Note that in the second equality we canonically identify $C_0(Y_2^\circ)$ with the suspension $SC(Y)$.
\end{proof}

\begin{defn}\label{defn:dBC}
We call the Kasparov product
\begin{align*}
\beta_{\Gamma ,\Lambda}^{X,Y} := \ell_{\Gamma, \Lambda} \hotimes_{C^*(\Gamma, \Lambda)} { \cdot } \colon \KKR (C^*(\Gamma, \Lambda) , \bR) \to \KR^*(X,Y) \label{form:dBC}
\end{align*}
the \emph{dual relative higher index map}.
\end{defn}
This map has functoriality dual to (\ref{cond:funct}). Hence if $(B\Gamma, B\Lambda)$ has the homotopy type of a pair of finite CW-complexes, we have $\beta_{\Gamma, \Lambda}^{X,Y} = \Phi^* \circ \beta_{\Gamma, \Lambda}^{B\Gamma , B\Lambda }$ for any pair $(X,Y)$ of finite CW-complexes with the reference map $\Phi \colon (X,Y) \to (B\Gamma, B\Lambda)$.

\begin{prp}
For any $x \in \KR_*(X,Y)$ and $\xi \in \KR^*(C^*(\Gamma, \Lambda))$, we have
\[ \langle \alpha_{\Gamma, \Lambda}(x) , \xi \rangle = \langle x, \beta_{\Gamma, \Lambda}(\xi ) \rangle \in \KKR(\bR,\bR) \cong \bZ   \]
\end{prp}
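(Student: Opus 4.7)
The plan is to identify both sides of the claimed equality with the same triply-iterated Kasparov product of $\ell_{\Gamma,\Lambda}$ with $x$ and $\xi$, and then appeal to associativity (and partial commutativity over disjoint tensor factors) of the product. Concretely, $\ell_{\Gamma,\Lambda}$ is viewed as an element of $\KKR_{*}(\bR,\, C_0(X^\circ)\otimes C^*(\Gamma,\Lambda))$, while $x\in\KKR(C_0(X^\circ),\bR)$ and $\xi\in\KKR(C^*(\Gamma,\Lambda),\bR)$ act on disjoint tensor factors. Unwinding the definitions of $\alpha_{\Gamma,\Lambda}$ and $\beta_{\Gamma,\Lambda}$,
\[
\langle \alpha_{\Gamma,\Lambda}(x),\xi\rangle \;=\; \bigl(\ell_{\Gamma,\Lambda}\hotimes_{C_0(X^\circ)} x\bigr)\hotimes_{C^*(\Gamma,\Lambda)}\xi,
\qquad
\langle x,\beta_{\Gamma,\Lambda}(\xi)\rangle \;=\; \bigl(\ell_{\Gamma,\Lambda}\hotimes_{C^*(\Gamma,\Lambda)}\xi\bigr)\hotimes_{C_0(X^\circ)} x.
\]

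The next step is to observe that both of these unfold to the common triple product
\[
\ell_{\Gamma,\Lambda}\hotimes_{C_0(X^\circ)\otimes C^*(\Gamma,\Lambda)}\bigl(x\otimes_{\bR}\xi\bigr)\;\in\;\KKR(\bR,\bR)\cong\bZ,
\]
merely parenthesized differently. This is exactly the content of Kasparov's associativity theorem for the intersection product together with the fact that products over commuting ideals in the coefficient algebra commute with one another (Kasparov, \emph{Invent. Math.} 91, Thm.~2.14, or equivalently the description of the product in terms of an external product followed by restriction along the unit map $\bR\to C_0(X^\circ)\otimes C^*(\Gamma,\Lambda)\otimes C_0(X^\circ)^{\op}\otimes C^*(\Gamma,\Lambda)^{\op}$).

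I do not expect any genuine obstacle: the argument is structural and the only bookkeeping is to verify that the $\KR$-degree shifts coming from $\ell_{\Gamma,\Lambda}\in\KKR_{-1}$ and the odd/even gradings line up consistently on both sides, and that passing from $\KKR_*(\bR, C_0(X_\infty)\otimes C\phi)$ to $\KKR_*(\bR, C_0(X^\circ)\otimes C^*(\Gamma,\Lambda))$ via the canonical isomorphism is compatible with both contraction orders. Once this is noted, the proposition is a formal consequence of associativity and no further analytic input beyond the definition of $\ell_{\Gamma,\Lambda}$ is needed.
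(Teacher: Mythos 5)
Your argument is correct and is essentially the paper's own proof, which simply invokes associativity of the Kasparov product; your unfolding of both pairings as the triple product of $\ell_{\Gamma,\Lambda}$ with $x$ and $\xi$ over the disjoint tensor factors $C_0(X^\circ)$ and $C^*(\Gamma,\Lambda)$ is exactly the intended justification. No gap here.
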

\begin{proof}
It immediately follows from the associativity of the Kasparov product.
\end{proof}

We give a more explicit description of $\mu^\MF_0$. Let $(M,f,E)$ be a representative of an element of $\KO_0(X,Y)$. Then the corresponding element of the analytic $\KO$-homology cycle is represented by the Kasparov bimodule
\[ [ L^2 (M_\infty, S_E), f^* , F:=\chi(D) ], \] 
where $ D$ is the Dirac operator on $M_\infty$ twisted by $E$ and $\chi$ is as in (\ref{form:Ft}).

The compact operator algebra on the Hilbert $C\phi$-module 
\[ L^2(M_\infty, S _\cE) := \sE \otimes_{C_0(M_\infty)} L^2(M_\infty, S) \]
is isomorphic to the mapping cone $C\bar{\phi}$, where $\bar{\phi}$ is the composition
\[ \bK(L^2(N_\infty , S_{E, \cW })) \xrightarrow{{\cdot} \otimes _\phi 1} \bK(L^2(N_\infty , S_{E, \cV})) \subset \bK(L^2(M_\infty , S_{E, \cV})). \]

Let $D_\cV$ denote the Dirac operator on $M_\infty$ twisted by $\cV \otimes E$ and let $D_\cW$ denote the Dirac operator on  $N \times \bR$ twisted by $\cW \otimes E$, which are extended to regular self-adjoint operators on the $L^2$-spaces. Let $\chi$ be as in (\ref{form:Ft}) and set
\begin{align*}
F_\cW &:= \chi (D_\cW) \in \bB(L^2(N \times \bR , S_{E, \cW}) ), \\
F_\cV &:= \chi (D_\cV) \in \bB(L^2(M_\infty , S_{E, \cV})) .
\end{align*}
Let $V \colon L^2(N  \times \bR , S_{E, \cV}) \to L^2(M_\infty , S_{E, \cV} )$ denote the partial isometry identifying subspaces $L^2(N_\infty, S_\cV)$ of the domain and the range. 
\begin{lem}\label{lem:cptsupp}
The difference $F_\cV - V(F_\cW \otimes _\phi 1)V^*$ is in $\bB(L^2(M_1 , S_{E, \cV}))$. 
\end{lem}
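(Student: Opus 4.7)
The plan is to exploit the finite-propagation property built into formula (\ref{form:Ft}) together with the product structure on the cylindrical end. On $N_\infty = N \times [0,\infty) \subset M_\infty$, the metric is a product $dr^2 + g_N$, and by naturality of the Mishchenko construction the Hilbert $C^*\Gamma$-module bundle $\cV|_{N_\infty}$ is canonically identified with the pullback of $\cW \otimes_\phi C^*\Gamma$ from $N$; this is precisely the identification realized by the partial isometry $V$. Under this identification, $D_\cV|_{N \times (0,\infty)}$ agrees with $V(D_\cW \otimes_\phi 1)V^*|_{N \times (0,\infty)}$ as a first-order differential operator, since the spin Dirac operator depends only on the local Riemannian and bundle data, and the twisting by $E$ is pulled back from $N$ on the collar.

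Since $\supp \hat\chi \subset [-1,1]$, one has $\chi(D)=\frac{1}{2\pi}\int_{-1}^{1}\hat\chi(u)\,e^{iuD}\,du$ for both $D=D_\cV$ and $D=D_\cW$. The unit-speed finite-propagation principle for the wave semigroup of a Dirac-type operator on a complete manifold — valid also in the Hilbert-$C^*\Gamma$-module setting because $D_\cV$ is regular self-adjoint by the Hilsum theorem cited in Subsection \ref{section:2.2} — yields that $e^{iuD_\cV}\xi$ is supported in the $|u|$-neighborhood of $\supp\xi$. Consequently, if $\xi$ is supported in $N \times (1,\infty) = M_\infty \setminus M_1$, then for $|u|\leq 1$ the vector $e^{iuD_\cV}\xi$ remains supported in $N \times (0,\infty) \subset N_\infty$, the open set where $D_\cV$ and $V(D_\cW \otimes_\phi 1)V^*$ coincide. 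Uniqueness of solutions to the wave equation on this region gives $e^{iuD_\cV}\xi = V(e^{iuD_\cW}\otimes_\phi 1)V^*\xi$, and integrating against $\hat\chi$ yields
\[
\bigl(F_\cV - V(F_\cW \otimes_\phi 1)V^*\bigr)\xi = 0
\]
for every such $\xi$.

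Finally, both $F_\cV$ and $V(F_\cW \otimes_\phi 1)V^*$ are self-adjoint (since $\chi$ is a real odd function), so the previous vanishing implies by taking adjoints that the difference also maps every vector into $L^2(M_1, S_{E,\cV}) \subset L^2(M_\infty, S_{E,\cV})$. Combining the two statements, the difference factors through an operator in $\bB(L^2(M_1, S_{E,\cV}))$, as required. The main technical point is the unit-speed propagation estimate in the Hilbert-$C^*\Gamma$-module setting; this is the standard domain-of-dependence argument for the wave equation adapted to operators on Hilbert module bundles, and it reduces to the classical statement once the regular self-adjointness of $D_\cV$ is in hand.
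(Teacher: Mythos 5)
Your argument is correct and follows essentially the same route as the paper: the paper likewise uses the unit finite-propagation speed coming from $\supp \hat{\chi} \subset [-1,1]$ in the presentation (\ref{form:Ft}) (citing Roe's wave-equation estimates, valid in the Hilbert-module setting) together with the agreement of $D_\cV$ and $V(D_\cW \otimes_\phi 1)V^*$ on the product end, to conclude that $F_\cV s = V(F_\cW \otimes_\phi 1)V^* s$ for sections $s$ supported in $N \times [1,\infty)$. Your explicit self-adjointness/adjoint step controlling the range of the difference is left implicit in the paper but is the same routine observation.
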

\begin{proof}
This is essentially proved in \cite[Proposition 1.5]{MR1120996}. The Fourier transform presentation (\ref{form:Ft}) of $\chi(D_\cV)$ implies that the restriction of $\chi(D_\cV)s$ to $N \times [1,\infty)$ depends only on the restriction of $D_\cV$ and $s$ to $N \times [0, \infty)$ (see \cite[Lemma 2.1, Proposition 2.2]{MR996446}). In particular, we get
\[ \chi(D_\cV)s = V(\chi(D_\cW) \otimes _\phi 1)V^*s\] 
for $s \in C^\infty_c(M_\infty , S_{\cV,E})$ with $\mathrm{supp} (s) \subset N \times [1,\infty)$. 
\end{proof}
 
Let $\sigma_s := (1-\rho_s^2)^{1/4}$. Since $\sigma_0$ is supported on $N \times [1,2]$, we obtain that  ${\sigma_0 F_\cW \sigma _0 \otimes _\phi 1}= \sigma_0 F_\cV \sigma_0$. Hence, by Lemma \ref{lem:cptsupp}, we have 
\[ (\sigma _0 F_\cW \sigma_0, \sigma _s F_\cV \sigma _s) \in \cM (C\bar{\phi}) \cong \bB(L^2(M_2 , S_{E, \cE})). \]

\begin{prp}
Through the isomorphism of $\KO _0(X,Y)$ with the analytic $\K$-homology group $\KKR (C_0(X^\circ ), \bR )$, the homomorphism $\mu ^\MF_*$ maps $[M,f,E]$ to $\partial [(T_\cW , T_\cV(s))] \in \KR_{-1}(C\bar{\phi})$, where 
\[ 
T_{\cW}=  \begin{pmatrix}
\rho_0 & \sigma_0 F_\cW^1 \sigma_0 \\ \sigma_0 F_\cW^0 \sigma_0 & -\rho_0
\end{pmatrix}_{\textstyle ,}  \ \ T_{\cV}(s):= \begin{pmatrix}
\rho_s & \sigma_s F_\cV^1 \sigma_s \\ \sigma_s F_\cV^0 \sigma_s & -\rho_s
\end{pmatrix}_{\textstyle .} \]
\end{prp}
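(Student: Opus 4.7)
The strategy is to compute the Kasparov product
\[
\ell_{\Gamma,\Lambda}\hotimes_{C_0(X^\circ)}[L^2(M_\infty,S_E),f^*,\chi(D)]
\]
by the Connes--Skandalis recipe and then recognize the resulting odd cycle as $(T_\cW,T_\cV(s))$. The first task is to identify the interior tensor product Hilbert $C\phi$-module. Pulling the bundle $\cE\to X_\infty$ back along $f_\infty\colon M_\infty\to X_\infty$ produces a bundle whose fiber is $S\cV$ over $M$ and $\cX$ over the collar $N\times[0,\infty)$, glued over $N$ by $\psi_Y$. Forming the interior tensor with $L^2(M_\infty,S_E)$ then produces a module which, once the cone coordinate $s\in[0,1)$ and the clutching are made explicit, is naturally identified with the set of pairs $(\eta,\xi_s)$ with $\eta\in L^2(N_\infty,S_{E,\cW})$, $\xi_s\in L^2(M_\infty,S_{E,\cV})[0,1)$, and $\xi_0=\eta\otimes_\phi 1$ under the inclusion $L^2(N_\infty,S_{E,\cV})\subset L^2(M_\infty,S_{E,\cV})$. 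The associated compact operator algebra is canonically $C\bar\phi$, with $\bar\phi$ the map given in the statement.

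Next I would compute the product operator. Since $\rho$ is self-adjoint with $\rho^2\le 1$, the function $\sigma:=(1-\rho^2)^{1/2}$ is well defined and both $1-\rho$ and $\sigma$ lie in $C_0(X_\infty^\circ)$. Applying the Connes--Skandalis construction to the ungraded odd cycle $(\sE_\infty,\rho)$ and the graded degree-zero cycle $(L^2(M_\infty,S_E),F)$ yields, on the graded tensor product module, the self-adjoint operator
\[
T=\begin{pmatrix}\rho & \sigma F^1\sigma\\ \sigma F^0\sigma & -\rho\end{pmatrix}.
\]
The symmetric sandwich $\sigma F^j\sigma$, rather than $\sigma^2 F^j$, is chosen to guarantee literal self-adjointness; the two differ by a compact operator since $[F,\sigma]\in\bK$, which in turn follows from $\sigma\in C_0$ and pseudolocality of $\chi(D)$. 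The remaining KK-cycle axioms $T^*=T$, $T^2-1\in C\bar\phi$, and positivity of the graded commutator with $F$ modulo compacts are then immediate from $F^2-1\in\bK$, $[F,\rho]\in\bK$, and the identity $\rho^2+\sigma^2=1$.

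Third, $T$ must be matched with the pair $(T_\cW,T_\cV(s))$. For $s\in(0,1)$ the block expression for $T$ coincides with $T_\cV(s)$ on $L^2(M_\infty,S_{E,\cV})$ by inspection. At $s=0$, $\rho_0\equiv -1$ on $M_1$ and $\sigma_0$ is supported on the collar slab $N\times[1,2]$, so Lemma \ref{lem:cptsupp} gives $\sigma_0 F_\cV\sigma_0=V(\sigma_0 F_\cW\sigma_0\otimes_\phi 1)V^*$, meaning the $s=0$ limit of $T_\cV(s)$ equals $\bar\phi(T_\cW)$. This is exactly the compatibility that makes $(T_\cW,T_\cV(s))$ a real self-adjoint unitary in $\cM(C\bar\phi)/C\bar\phi$, whose boundary class is the stated element of $\KR_{-1}(C\bar\phi)$.

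The main obstacle lies in the bookkeeping for the first step: tracing how the clutching $\psi_Y\colon S\cV|_Y\to\cX|_Y$ and the passage from the suspension coordinate of $SC^*\Gamma$ to the cone coordinate of $C\phi$ combine, after interior tensoring with $L^2(M_\infty,S_E)$, into the mapping-cone module with precisely the composition $\bar\phi$ that appears in the statement. Once this identification is set up, the operator-level computation and its recognition as $(T_\cW,T_\cV(s))$ reduce to pseudolocality of $\chi(D)$ and the standard finite-propagation property of the Dirac operator.
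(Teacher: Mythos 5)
Your overall strategy is the same as the paper's: identify $\sE_\infty\otimes_{C_0(X^\circ)}L^2(M_\infty,S_E)$ with the mapping-cone module $L^2(M_\infty,S_\cE)$ whose compacts are $C\bar\phi$, and then exhibit the Kasparov product by a Connes--Skandalis-type formula with $\rho$ on the diagonal (the paper does this through its Lemma \ref{lem:Kas}, a Real version of \cite[Theorem 18.10.1]{MR1656031}). However, there is a genuine gap at the central step. The operator you write down in Step 2, with off-diagonal blocks $\sigma F^j\sigma$, is not defined: $F=\chi(D)$ acts on $L^2(M_\infty,S_E)$ and only pseudo-commutes with $C_0(M_\infty)$, so ``$1\otimes F$'' does not exist on the interior tensor product. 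The product formula requires choosing an $F$-connection $G$ on $L^2(M_\infty,S_\cE)$, and the operator it produces is built from $G$, not from $F$. Consequently your Step 3, where the product operator is matched with $(T_\cW,T_\cV(s))$ ``by inspection'' for $s\in(0,1)$, conceals exactly the substance of the proof: $F_\cV=\chi(D_\cV)$ and $F_\cW=\chi(D_\cW)$ act on different modules, and what must be shown is that $\sigma_sF_\cV\sigma_s$ (resp.\ $\sigma_0F_\cW\sigma_0$) is a $\sigma_sF\sigma_s$- (resp.\ $\sigma_0F\sigma_0$-) connection, so that by existence of connections \cite[Proposition 18.3.3]{MR1656031} and uniqueness modulo compacts \cite[Example 18.3.2(c)]{MR1656031} the pair $(T_\cW,T_\cV(s))$ is a compact perturbation of a legitimate representative of the product furnished by Lemma \ref{lem:Kas}; one must also check that lemma's hypothesis $[\pi(A),T]\subset\bK$. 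Without this connection argument the identification of $\mu^\MF_0([M,f,E])$ with $\partial[(T_\cW,T_\cV(s))]$ is not established.

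Two smaller points. The bookkeeping you single out as ``the main obstacle'' (the clutching producing $C\bar\phi$, and the compatibility $\sigma_0F_\cV\sigma_0=V(\sigma_0F_\cW\sigma_0\otimes_\phi1)V^*$ at $s=0$ via Lemma \ref{lem:cptsupp}) is comparatively routine and is already set up in the paper before the proposition; the real work is the connection step above. Also, to match the statement and the standard product formula you need $\sigma=(1-\rho^2)^{1/4}$, not $(1-\rho^2)^{1/2}$, since the sandwich in $T_\cV(s)$ uses the fourth root on each side.
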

\begin{proof}
According to \cite[Proposition 18.3.3]{MR1656031}, there always exists an $F$-connection 
\[ (G_\cW,G_\cV(s)) \in \cM(C\bar{\phi}) \cong \bB(L^2(M_\infty, S_{E , \cE } ) ). \]
Note that $G_{\cW} \in \bB(L^2(N_\infty , S_{E, \cW}))$ and each $G_{\cV}(s) \in \bB(L^2(M_\infty, S_{E,\cV}))$ are $ F $-connections. Recall that $\sigma _0 F_\cW \sigma _0$ (resp.\ $ \sigma _s F_\cV \sigma _s$) is a $\sigma_0F \sigma _0$- (resp.\ $\sigma _s F \sigma _s$-) connection. By \cite[Example 18.3.2(c)]{MR1656031}, we obtain that 
\begin{align*}
\sigma_0 F_\cW \sigma_0 - \sigma _0 G_\cW \sigma _0 & \in \bK(L^2(N_\infty, S_{E,\cW})), \\
\sigma _s F_\cV \sigma _s - \sigma _s G_\cV(s) \sigma _s & \in \bK(L^2(M_\infty , S_{E, \cV})).
\end{align*} 

Now, the operator
\[ \bigg( \begin{pmatrix}
\rho_0 & \sigma_0 G_\cW^1  \sigma_0 \\ \sigma_0G_\cW^0 \sigma_0 & -\rho_0
\end{pmatrix}_{\textstyle ,} \begin{pmatrix}
\rho_s & \sigma_sG_\cV (s)^1 \sigma_s\\ \sigma_sG_\cV (s)^0\sigma_s & -\rho_s
\end{pmatrix} \bigg) \]
is a compact perturbation of $(T_\cW, T_\cV(s))$ and hence is a Fredholm operator on $L^2(M_\infty ,S_{E,\cE})$. We apply Lemma \ref{lem:Kas} to complete the proof. 
\end{proof}

\section{Coincidence of the relative higher index maps}\label{section:3.5}
The goal of this section is to prove the following.
\begin{thm}\label{thm:equal}
The homomorphisms $\mu^\CWY_*$, $\mu_*^\DG$ and $\mu_*^\MF$ coincides.
\end{thm}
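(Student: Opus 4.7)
The plan is to prove Theorem \ref{thm:equal} through two separate comparisons, $\mu^\MF_* = \mu^\DG_*$ and $\mu^\MF_* = \mu^\CWY_*$, each exploiting the explicit representative of $\mu^\MF_0$ obtained in the preceding proposition as the boundary class $\partial[(T_\cW, T_\cV(s))] \in \KR_{-1}(C\bar\phi)$.

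For the first comparison, both classes live in $\KR_0(C^*(\Gamma,\Lambda)) \cong \KR_{-1}(C\bar\phi)$ and are built from the same twisted Dirac operators $D_\cV$ on $M_\infty$ and $\bD_\cW$ on the neck. I would organize the comparison via the Puppe exact sequence for the mapping cone $C\bar\phi$. Both $\partial[(T_\cW, T_\cV(s))]$ and $\psi_*(\ind_b(D_\cV,C)) + c(\bD_\cW, C)$ have the same image under $\theta_*$ (namely the Cayley-type boundary class of $\bD_\cW$ in $\KR_{-1}(C^*\Lambda)$), since both encode it through different but operator-homotopic functional calculi of $\bD_\cW$ cut off by $\rho_0$ and $\sigma_0$. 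The difference then lies in $\psi_*\KR_0(C^*\Gamma)$ and can be identified with $\ind_b(D_\cV, C)$ by the classical fact that the $b$-index of $D_\cV + \tilde C$ agrees with the coarse/Kasparov index of $D_\cV$ on $M_\infty$; this is essentially the Connes--Skandalis description of the Kasparov product of the symbol class with the fundamental class on a manifold with cylindrical end.

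For the second comparison, I would translate the CWY picture into the MF picture using the Morita-type isomorphisms $\zeta_{\tilde M_2} \colon C^*\Gamma \otimes \bK \cong C^*(\tilde M_2)^\Gamma$ and $\zeta_\phi \colon C\phi \otimes \bK \cong C\tilde\phi$ from Remark \ref{rmk:misc}(2). Under these identifications the lifted operator $\tilde F_t = \chi(tD)$ on $L^2(\tilde M_2)$ corresponds to the bounded transform of the twisted Dirac operator $D_\cV$ on the Hilbert $C^*\Gamma$-module $L^2(M_\infty, S_{E,\cV})$, because finite propagation on the Galois covering transfers under $\zeta$ to multiplication by $C^*\Gamma$-coefficients via parallel transport along the Mishchenko bundle (Remark \ref{rmk:misc}(4)). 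The asymptotic morphisms $\theta_\kappa$, $\tau_\kappa$, and $\chi$ then play the role of the partition-of-unity functions $\sigma_s$ and $\rho_s$ in the MF representative: $\theta_\kappa$ implements the lift to the Galois covering, $\tau_\kappa$ pushes collar operators from $N_2'$ into $N_2$, and $\chi_s$ glues the boundary with the interior inside the mapping cone. Assembling these identifications into a single commutative diagram, the CWY composition applied to $\partial[F_t^0]$ should recover the class $\partial[(T_\cW, T_\cV(s))]$.

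The main obstacle is the second comparison: the CWY construction lives in the localization/Roe algebra world with a moving propagation parameter $t$, whereas MF lives in the Kasparov world where compactness is checked at fixed scale. The bridge requires careful bookkeeping between pseudo-locality of $F_t$ on $\tilde M$ (Remark \ref{rmk:misc}(1) and (4)), the finite-propagation estimates from \cite{MR996446}, and the identification of compact operators on the relative Mishchenko field $\sE$ with $C\bar\phi$. I expect the technical core to consist of constructing a single asymptotic morphism that realizes the CWY composition on the level of C*-algebras, and verifying that evaluation at a suitable time parameter matches the Kasparov product with $\ell_{\Gamma,\Lambda}$ built from $\rho_s$ and $\sigma_s$; once this diagram is in place, the equality of the two classes on $\partial[F_t^0]$ reduces to the known agreement of the coarse and Mishchenko--Fomenko indices in the closed case.
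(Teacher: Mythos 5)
Your second comparison ($\mu^{\CWY}_0=\mu^{\MF}_0$) is, in outline, the route the paper takes: one transports the CWY data through the isomorphism $\bK(L^2(M_2,S_{E,\cV}))\cong C^*(\tilde M_2)^\Gamma$ (this is Lemma \ref{lem:completion}, built from the Mishchenko projection and the unitary $U$), identifies the lift $\chi(t\tilde D)$ with the bounded transform of $D_\cV$, and then checks that the remaining discrepancies are compact because the relevant operators are $0$-th order pseudodifferential with equal principal symbols (Remark \ref{rmk:misc}(4)); the cut-offs $\rho_s,\sigma_s$ are matched to the CWY quotient maps by an explicit interpolation $\bar\rho_{\kappa,s}$. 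So that half is fine as a sketch, modulo writing out the homotopy and the symbol comparison.

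The genuine gap is in your first comparison, $\mu^{\MF}_0=\mu^{\DG}_0$. Your Puppe-sequence argument only gets you this far: if $\theta_*$ of the two classes agree, then their difference is $\psi_*(z)$ for \emph{some} $z\in\KR_0(C^*\Gamma)$, and $z$ is determined only modulo $\ker\psi_*=\Im\phi_*$. Citing the ``classical'' agreement of the $b$-index of $D_\cV+\tilde C$ with the cylindrical-end Kasparov index identifies one possible lift of the difference, but it does not identify the difference itself inside $\KR_0(C^*(\Gamma,\Lambda))$; the whole content of the theorem is precisely this relative identification, which cannot be recovered from the image under $\theta_*$ together with a non-canonical lift. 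Moreover, even the premise that the two classes have equal $\theta_*$-image (``operator-homotopic functional calculi of $\bD_\cW$'') already requires an explicit homotopy of the same nature as the one you are trying to avoid. The paper circumvents the ambiguity differently: it first uses noncommutative spectral sections (Lemma \ref{lem:APS}) to \emph{choose} the perturbation $C$ so that $\ind_b(D_\cV,C)=0$, reducing $\mu^\DG_0$ to the single class $c(\bD_\cW,C)$; it then deforms the MF representative so that it is supported on the cylinder $N\times[0,\infty)$ (the rescaled cut-offs $\rho^{(1-s)^2}_\tau$ and the commutator estimate of Lemma \ref{lem:diffineq} are what make this legitimate), rewrites $c(\bD_\cW,C)$ as a concrete bounded Kasparov bimodule via the unbounded picture and \cite{MR715325}, and finally matches the two representatives by a finite-propagation wave-equation argument. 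Without either this normalization of $C$ or an equally explicit homotopy between mapping-cone representatives, your argument does not close; as it stands it proves equality only up to an undetermined element of $\psi_*(\Im\phi_*)^{\perp}$-type ambiguity, i.e.\ not equality at all.
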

Hereafter we only show Theorem \ref{thm:equal} for degree $0$ case, namely $\mu^\CWY_0 = \mu_0^\MF =\mu_0^\MF$. As is commented in Remark \ref{rmk:degree}, it is enough for the proof of the full statement of Theorem \ref{thm:equal}.

\subsection{The proof of $\mu^{\mathrm{CWY}}_*=\mu^{\mathrm{MF}}_*$}
As in Subsection \ref{section:2.1}, we fix a Borel isomorphism between $S_E^0$, $S_E^1$ and the trivial bundle $\underline{\bR}_{M_\infty}^N$ and identify the $\ast$-representations $L^2(M_\infty, S_E^0) \cong L^2(M_\infty)^{\oplus N} \cong L^2(M_\infty, S_E^1)$ of $C_0(M_\infty)$.
\subsubsection*{Step 1.}
Let $F_t$ be as in (\ref{form:Ft}). For $s \in [0,1]$, let $\tau_s :=(1-(2s-1)^2)^{1/4}$. The continuous function
\[ s \mapsto T'_t(s):=\begin{pmatrix} 2s-1 & \tau_s F^*_t \tau_s \\ \tau_s F_t \tau_s & 1-2s \end{pmatrix} \in \bM_{2N} (\cM(C^*_L(M_\infty )) )
\]
on $[0,1]$ satisfies that $T'_t(0)^2= T'_t(1)^2 = 1$. Hence $T'_t$ is a real self-adjoint element $T_t' \in \bM_{2N}(\cM(SC^*_L(M_\infty)))$ with $(T_t')^2-1 \in \bM_{2N}(SC^*_L(M_\infty))$, that is, it determines an $\KR$-class
\[ [T_t'] \in \KR_{0}(\cQ(SC^*_L(M_\infty)))\]
and hence $\partial [T_t'] \in \KR_{-1}(SC^*_L(M_\infty))$.
\begin{lem}\label{lem:step1}
We have $\beta \otimes \partial [F_t^0] = \partial [T_t'] \in \KR_{-1}(SC^*_L(M_\infty))$.
\end{lem}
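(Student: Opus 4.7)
The plan is to prove the equality by naturality of the boundary map with respect to the external Kasparov product with the Bott element $\beta \in \KKR_{-1}(\bR, S^{0,1})$, combined with an explicit identification of $T_t'$ as the Bott image of $F_t^0$.

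First, I invoke the commutative diagram
\[
\xymatrix@C=3em{
\KR_1(\cQ(C^*_L(M_\infty))) \ar[r]^-{\partial} \ar[d]_{\beta \hotimes \cdot} & \KR_0(C^*_L(M_\infty)) \ar[d]^{\beta \hotimes \cdot} \\
\KR_0(\cQ(SC^*_L(M_\infty))) \ar[r]^-{\partial} & \KR_{-1}(SC^*_L(M_\infty)),
}
\]
which encodes the naturality of the boundary map of the extension $0 \to C^*_L(M_\infty) \to \cM(C^*_L(M_\infty)) \to \cQ(C^*_L(M_\infty)) \to 0$ under tensoring by $S^{0,1}$. This reduces the lemma to the assertion $[T_t'] = \beta \hotimes [F_t^0]$ in $\KR_0(\cQ(SC^*_L(M_\infty)))$.

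Second, I rewrite $T_t'$ as
\[
T_t'(s) = (2s-1) \cdot \varepsilon + \tau_s^2 \cdot (F_t \otimes \sigma_1),
\]
where $\varepsilon = \mathrm{diag}(I_{2N}, -I_{2N})$ is the grading operator and $\sigma_1$ is the swap on the outer $2 \times 2$ factor, so that $\{ \varepsilon, \sigma_1 \} = 0$. Since $\tau_s$ is a scalar function of $s$ it commutes with $F_t$, and since $F_t^* = F_t$ with $F_t^2 \equiv 1$ modulo compacts, a direct computation gives $(T_t')^2 \equiv (2s-1)^2 + \tau_s^4 = 1$ modulo compacts. This is precisely the standard Kasparov-product formula $\phi(s) \varepsilon + \psi(s) F$ arising from the external product of the odd self-adjoint Kasparov bimodule representing $[F_t^0]$ (with operator $F = F_t$) and the Bott representative $(S^{0,1} \hotimes \mathrm{Cl}_1, 1, x)$ under the parameter substitution $(\phi, \psi) = (2s-1, \tau_s^2)$. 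Since any two choices of $(\phi, \psi)$ with $\phi^2 + \psi^2 \equiv 1$ and boundary values $\phi(0) = -1$, $\phi(1) = 1$, $\psi(0) = \psi(1) = 0$ yield homotopic Kasparov bimodules, the identification $[T_t'] = \beta \hotimes [F_t^0]$ follows.

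The main technical obstacle is the compatibility with Real structures. The Real involution on $S^{0,1}$ is $\bar f(x) = \overline{f(-x)}$, which under the identification $x = 2s-1$ becomes the reflection $s \mapsto 1-s$. One must verify that $\phi(s) = 2s-1$ is odd and $\psi(s) = \tau_s^2$ is even under this reflection, and that $F_t = \chi(tD)$ is a real self-adjoint operator, which follows because $\chi$ is a real odd function and $D$ is real self-adjoint on the Real Hilbert $C^*_L(M_\infty)$-module $L^2(M_\infty, S_E)$. These checks confirm the identification within the $\KKR$ category, completing the proof.
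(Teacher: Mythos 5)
Your argument is correct, and its computational core is the same as the paper's --- both proofs come down to recognizing $T_t'=\rho\varepsilon+\tau_s^2 F_t$ (with $\rho=2s-1$, $\varepsilon=\diag(1,-1)$) as the explicit Bott--Kasparov product formula applied to $F_t$ --- but the bookkeeping route differs. The paper first identifies $\partial[T_t']$ with the real self-adjoint Kasparov class $[SC^*_L(M_\infty)^{\oplus 2N},1,T_t']$ via Remark \ref{rmk:KR-1}, and then invokes its explicit product formula (Lemma \ref{lem:Kas}, with $G$ the trivial $F_t$-connection and $\tilde T_1=2s-1$) to read this class off as $\beta\hotimes\partial[F_t^0]$; everything happens after the boundary map, at the level of Kasparov bimodules. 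You instead stay before the boundary map, in the $\KR$-theory of the Calkin-type algebras, identify $[T_t']$ with the image of $\beta\hotimes[F_t^0]$ there, and then commute $\partial$ with the Bott product by naturality. What your route buys is that the only KK-theoretic input is the standard compatibility of the index map with external products; what the paper's route buys is that the homotopy-independence of the cutoff pair $(\phi,\psi)$, which you argue by hand, is absorbed into the general connection formalism of Lemma \ref{lem:Kas}. Note that the ``standard Kasparov-product formula'' you appeal to in your second step is precisely Lemma \ref{lem:Kas} of the appendix, so the two proofs ultimately lean on the same explicit formula.

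Two small points of precision. First, the external product $\beta\hotimes{\cdot}$ on $\KR_1(\cQ(C^*_L(M_\infty)))$ naturally lands in $\KR_0(S\cQ(C^*_L(M_\infty)))$, not in $\KR_0(\cQ(SC^*_L(M_\infty)))$; your diagram needs the map induced by $S\cM(C^*_L(M_\infty))\to\cM(SC^*_L(M_\infty))$ (identity on the ideal $SC^*_L(M_\infty)$), and the commutativity you use is the compatibility of the two boundary maps under this morphism of extensions together with the compatibility of $\partial$ with external products; this is standard but worth saying, since $T_t'$ itself is a genuine multiplier of the suspension (its endpoint values are $\mp\varepsilon$) rather than an element coming from $S\cM(C^*_L(M_\infty))^+$. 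Second, your reality check ($\phi$ odd and $\psi$ even under $s\mapsto 1-s$, $F_t$ real) is the right condition relative to the paper's conventions, since the paper's Bott generator $\beta=[S,1,2s-1]$ has exactly this odd operator component; spelled out this way it is consistent and complete.
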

\begin{proof}
According to Remark \ref{rmk:KR-1}, $\partial [T_t']$ corresponds to the Real self-adjoint $\KK$-class 
\[ [SC^*_L(M_\infty)^{\oplus 2N}, 1 , T_t' ] \in \KKR_{-1}(\bR, SC^*_L(M_\infty)). \]
By Lemma \ref{lem:Kas}, it coincides with the Kasparov product of the Bott generator $\beta :=[S, 1, 2s-1 ] \in \KKR_{-1}(\bR, S)$ with
\[ \bigg[ C^*_L(M_\infty )^{\oplus N} \oplus (C^*_L(M_\infty)^{\mathrm{op}})^{\oplus N} , 1 ,  \begin{pmatrix} 0 & F_t^1 \\ F_t^0 & 0 \end{pmatrix} \bigg] \in \KKR ( \bR,  C^*_L(M_\infty)), \]
which corresponds to $\partial [F_0]$ through the isomorphism $\KR_0(C^*_L(M_\infty)) \cong \KR_{-1}(\cQ(C^*_L(M_\infty ))) \cong \KKR(\bR, C^*_L(M_\infty)) $. This shows that $\partial[F_t^0]$ is mapped by the Bott isomorphism to $\partial [T_t'] $. 
\end{proof}

\subsubsection*{Step 2.}
Let $\rho_s \in C_b(M_\infty)$ be as in (\ref{form:rho}) and let $\sigma _s:=(1-\rho_s^2)^{1/4}$.
Note that we have $\sigma _0 F_t^0 \sigma_0 \in D_{\mathrm{alg}}(N_2')$ and $\sigma _s F_t^0 \sigma_s \in D_{\mathrm{alg}}(M_2) $. Hence, by Remark \ref{rmk:misc} (1), the function
\[ s \mapsto  T_t(s):=\begin{pmatrix} \rho_s & \sigma_s F^1_t \sigma_s \\ \sigma_s F_t^0 \sigma_s  & -\rho_s \end{pmatrix}  \]
on $[0,1]$ is a real self-adjoint element of $\bM_{2N}(\cM(Ch_L))$ satisfying $T_t(0)^2-1 \in C^*(N_2')$ and $T_t(1)^2=1$.

Let $h_L'$, $q_1$, $q_2$, $\pi$ and $j_1$ be as in Subsection \ref{section:2.1}. 
\begin{lem} \label{lem:step2}
Then we have
\[ ((q_2)*^{-1} \circ \beta \circ (j_1)_*^{-1} \circ \pi_*)(\partial [F_t^0])=\partial [T_t] \in \KR_{-1}(Ch_L).\] 
\end{lem}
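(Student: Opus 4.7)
The approach is to reduce to Step~1 (Lemma~\ref{lem:step1}) by carefully comparing the operators $T_t$ and $T_t'$. Both are built from the same data $F_t$, but $T_t'$ uses the affine scaling functions $(2s-1,\tau_s)$ globally on $M_\infty$, whereas $T_t$ uses the truncated data $(\rho_s,\sigma_s)$, which agrees with $(2s-1,\tau_s)$ on $M_1$ and reduces smoothly to the trivial matrix $\mathrm{diag}(1,-1)$ outside $M_2$. Consequently $T_t$ lives naturally in $\bM_{2N}(\cM(Ch_L))$ while $T_t'$ lives in $\bM_{2N}(\cM(SC^*_L(M_\infty)))$, and the chain of maps in the statement is precisely the machinery that converts between the two pictures.

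\noindent First, I would use Lemma~\ref{lem:step1} to replace $\beta\otimes\partial[F_t^0]$ by $\partial[T_t']\in\KR_{-1}(SC^*_L(M_\infty))$. Reordering the composition (using that the Bott map commutes with the quotient maps up to the natural suspension identification), it suffices to show that the image of $\partial[T_t']$ under $\pi_*$, restricted via $(j_1)_*^{-1}$ to the quotient $SC^*_L(M_2)/C^*_L(N'_2\subset M_2)$, lifts via $(q_2)_*^{-1}$ to $\partial[T_t]$. Second, I would observe that, modulo the ideal $C^*_L(N'_\infty\subset M_\infty)$, the functions $\rho_s$ and $2s-1$ are equivalent: they agree on $X_1$, and both stabilize to $\pm 1$ with difference supported on the collar $N'_\infty$; likewise for $\sigma_s$ versus $\tau_s$. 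Therefore $T_t'$ and the image of $T_t$ under the natural map $\cM(Ch_L')\to\cM(SC^*_L(M_2)/C^*_L(N'_2\subset M_2))$ represent the same class in the quotient, so $(q_2)_*^{-1}$ returns exactly $\partial[T_t]$.

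\noindent The main technical obstacle is verifying the naturality of the boundary map $\partial$ through the isomorphisms $(j_1)_*^{-1}$ and $(q_2)_*^{-1}$, namely, that $T_t$ is genuinely \emph{the} preimage of the projected $T_t'$ in $\KR_{-1}(Ch_L')$ rather than merely a homotopic representative. This reduces to a commutative diagram chase using the six-term exact sequences associated with the inclusion $C^*_L(N'_2\subset M_2)\hookrightarrow C^*_L(M_2)$ and with the mapping cone $Ch_L'$: one checks that $\partial$ intertwines the ideal inclusions, the Bott suspension, and the mapping cone quotient. A secondary subtlety is controlling the propagation of the commutators $[\rho_s,F_t]$ and $[\sigma_s,F_t]$ at large $t$, so that the operators genuinely lie in the claimed C*-algebraic envelopes; this follows from the asymptotic vanishing $\mathrm{Prop}(F_t)\leq t^{-1}$ noted after~\eqref{form:Ft}, together with Remark~\ref{rmk:misc}(1).
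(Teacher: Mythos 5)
Your proposal is correct, but it proves the lemma by a different mechanism than the paper. The paper's proof introduces the auxiliary mapping cone $Ck_L'$ of the ideal inclusion $k_L'\colon C^*_L(N'_\infty\subset M_\infty)\to C^*_L(M_\infty)$, which receives both $SC^*_L(M_\infty)$ (via $\iota$) and $Ch_L'$ (via $j_2$), and then connects $j_2(T_t)$ with $\iota(T_t')$ by the explicit linear homotopy $\bar{T}^\kappa_t$ built from $\bar\rho_{\kappa,s}=\kappa\rho_s+(1-\kappa)(2s-1)$; the identity $(j_2)_*\partial[T_t]=\iota_*\partial[T_t']$ plus a commuting diagram relating $q_1,q_2,q_3,j_1,j_2,\iota$ then yields the claim. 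You instead push everything into the common quotient $S\,C^*_L(M_\infty)/C^*_L(N'_\infty\subset M_\infty)$, where the images of $T_t$ and $T_t'$ agree on the nose: the differences $\rho_s-(2s-1)$ and $\sigma_s-\tau_s$ are supported on the collar $N'_\infty$, and multiplication by a collar-supported function is annihilated in $\cM$ of the quotient because $(\rho_s-(2s-1))\,a$ and $(\sigma_s-\tau_s)\,a$ lie in $C^*_L(N'_\infty\subset M_\infty)$ for every $a$ in the localization algebra (support within $\mathrm{Prop}(a_t)$ of $(N'_\infty)^2$, and $\mathrm{Prop}(a_t)\to 0$). Combined with Lemma \ref{lem:step1}, naturality of Bott, and naturality of $\partial$ under the multiplier extensions of the surjections $q_1\otimes\id_S$, $q_2$ and the isomorphism $j_1\otimes\id_S$, this gives $(q_2)_*\partial[T_t]=(j_1\otimes\id_S)_*^{-1}(q_1\otimes\id_S)_*\partial[T_t']$, which is the statement; this is a valid route. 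The trade-off: inside $Ck_L'$ the ideal component survives, so the paper cannot compare $T_t$ and $T_t'$ by an equality and a genuine homotopy is required, whereas your quotient comparison is an exact equality but needs the multiplier-level naturality of $\partial$ and the vanishing of collar-supported multiplications spelled out. One small imprecision: the propagation control you need is not about the commutators $[\rho_s,F_t]$, $[\sigma_s,F_t]$ (those only enter in checking, before the lemma, that $T_t$ is a multiplier of $Ch_L'$ with $T_t^2-1$ in it), but about the products of collar-supported functions with elements of $C^*_L(M_\infty)$ landing in the ideal; it follows from the same support-plus-propagation argument, so this is a matter of attribution rather than a gap.
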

\begin{proof}
Let $k_L' \colon C^*_L(N_\infty' \subset M_\infty) \to C^*_L(M_\infty)$ and $\iota \colon SC_L^*(M_\infty) \to Ck_L'$ denote the inclusions, let $q_3 \colon Ck_L \to S\frac{C^*_L(M_\infty)}{C^*(N_\infty' \subset M_\infty)}$ denote the quotient and let $j_2 \colon Ch_L' \to Ck_L'$ denote the inclusion induced from $M_2 \subset M_\infty$. Then, the diagram
\[\xymatrix@C=3em{
\KR_ {-1}(SC^*_L(M_\infty )) \ar[rd]_{(q_1 \otimes \id_S)_*} \ar[r]^{\iota_*} & \KR_ {-1}(Ck_L')    \ar[d]^{(q_3)_*} _\cong  &  \KR_ {-1}(Ch_L') \ar[l]_{(j_2)_*} \ar[d]^{(q_2)_*}_\cong \\
& \KR_ {-1}\Big( S\frac{C^*_L(M_\infty)}{C^*_L(N_\infty \subset M_\infty )} \Big)  &\ar[l]_{(j_1 \otimes \id_S)_*}^\cong  \KR_{-1}\Big( S\frac{C^*_L(M_2)}{C^*_L(N_2' \subset M_2)} \Big)
}
\]
commutes. 

Let $\bar{\rho}_{\kappa ,s}(r):= \kappa  \rho_s(r) + (1-\kappa )(2s-1)$ and let $\bar{\sigma}_{\kappa ,s}:= (1-\bar{\rho}_{\kappa ,s}^2)^{1/4}$. Then,
\[ \bar{T}_{t}^\kappa (s):=\begin{pmatrix} \bar{\rho}_{\kappa ,s} & \bar{\sigma}_{\kappa ,s} F^1_t \bar{\sigma}_{\kappa ,s} \\ \bar{\sigma}_{\kappa ,s} F_t^0 \bar{\sigma}_{\kappa ,s}  & -\bar{\rho}_{\kappa ,s} \end{pmatrix} \]
is a homotopy connecting $j_2(T_t)$ and $\iota(T'_t)$ in the set of real self-adjoint unitaries of $\cQ (Ck_L)$. That is, $(\tilde{j}_2)_*(\partial [T_t ])=\iota_*(\partial [T'_t])$ holds in $\KR_{-1}(Ck_L)$. Since $(j_2)_*$ is an isomorphism, we get
\begin{align*}
& ((q_2)_*^{-1} \circ \beta \circ (j_1)_*^{-1} \circ (q_1)_* )(\partial [F_t]) \\
= & ((q_2)_*^{-1} \circ (j_1 \otimes \id_S)_*^{-1} \circ (q_1 \otimes \id_S)_*)(\partial [T_t']) \\
= & ((j_2)_*^{-1} \circ \iota_*)( \partial [T'_t]) = \partial [T_t]. \qedhere
\end{align*}
\end{proof}

\subsubsection*{Step 3}
Let $t>0$ be such that 
\begin{itemize}
\item $T_t(0) \in \bM_{2N}(\cM(C^*(N_1)))$ lifts to $\breve{T} \in \bM_{2N}(\cM(C^*(\tilde{N}_2)^\Lambda))$ and
\item each $T_t(s) \in \bM_{2N}(\cM(C^*(M_1)))$ lifts to $\tilde{T}(s) \in \bM_{2N}(\cM(C^*(\tilde{M}_2)^\Gamma))$.
\end{itemize}
By Remark \ref{rmk:misc} (3), we have $\tilde{h}_L (\breve{T}_t)= \tilde{T}(0)$ and hence $(\breve{T}, \tilde{T}(s)) \in \bM_{2N}(\cM(C\tilde{h}_L))$. Consequently we obtain the following.
\begin{lem}\label{lem:step3}
We have
\[ \mu_0^\CWY ([M,\id, E])=\partial [(\breve{T}, \tilde{T}(s))] \in \KR_{-1}(C\tilde{\phi}) . \]
\end{lem}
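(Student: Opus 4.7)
The strategy is simply to evaluate the remaining pieces $(\zeta_\phi)_*^{-1} \circ \ev_0 \circ \chi \circ \tau$ of the formula defining $\mu_0^{\CWY}$ on a concrete representative of the class $\partial[T_t] \in \KR_{-1}(Ch_L')$ produced in Step 2, identifying the final result in $\KR_{-1}(C\tilde{\phi})$ with its image in $\KR_{-1}(C\phi)$ via the Morita equivalence $\zeta_\phi$ (Remark \ref{rmk:misc} (2)).

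The first and main step is to observe that $T_t$ admits a concrete lift for $t$ large. By \cite[Proposition 2.2]{MR996446}, $F_t = \chi(tD)$ has propagation at most $t^{-1}$, and this bound is inherited by $\sigma_s F_t \sigma_s$ and $\sigma_0 F_t \sigma_0$ since scalar multiplication by $\rho_s, \sigma_s$ does not increase propagation. After the asymptotic-morphism shift $\tau_\kappa \colon Ch_L' \to Ch_L$, the operator $T_{t+\kappa}(s)$ has propagation below the injectivity radius of $M_2$ for $\kappa$ large enough. Then the lifting $\theta_\kappa$ of Subsection \ref{section:2.1}, combined with Remark \ref{rmk:misc} (1), produces the unique $\Gamma$-invariant lift $\tilde{T}(s) \in \bM_{2N}(\cM(C^*(\tilde{M}_2)^\Gamma))$. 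Since $T_t(0)$ is supported near $N$ (because $\rho_0$ vanishes off a collar of $N$) and so lies in $\bM_{2N}(\cM(C^*(N_1)))$, the analogous construction on $N_2$ yields the unique $\Lambda$-invariant lift $\breve{T} \in \bM_{2N}(\cM(C^*(\tilde{N}_2)^\Lambda))$.

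Second, I would verify the compatibility $\tilde{h}_L(\breve{T}) = \tilde{T}(0)$, which is immediate from the commutative square in Remark \ref{rmk:misc} (3) intertwining $\theta_s^{N_2}$ and $\theta_s^{M_2}$. Consequently the pair $(\breve{T}, \tilde{T}(s))$ is a self-adjoint element of $\bM_{2N}(\cM(C\tilde{h}))$ whose square differs from $1$ by a compact element, and so represents a self-adjoint unitary in the Calkin algebra of $C\tilde{h}$; the image of $\chi \circ \tau$ actually lands in $C\tilde{\phi} \subset C\tilde{h}$ by construction of $\tilde{\phi}$. Evaluating at $t=0$ after the shift picks out precisely this representative, and naturality of the boundary map $\partial$ with respect to the induced K-theory maps of the asymptotic morphisms delivers the identification.

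The main delicacy is the first step: justifying that the asymptotic morphisms $\tau$ and $\chi$ act on $\KR$-theory representatives by the pointwise formulas used above. This is the standard principle that an asymptotic morphism $\{\phi_s \colon A \to B\}$ induces a K-theory map which, on a class represented by an element $x$ lying in the approximating family for all sufficiently large parameter, is computed by $[\phi_{s_0}(x)]$ for any large $s_0$; here the propagation bound on $F_t$ is exactly what puts $T_t$ into the required regime, so that the formal composition $\ev_0 \circ \chi \circ \tau$ evaluated on $\partial[T_t]$ reduces to the boundary of the explicit pair $(\breve{T}, \tilde{T}(s))$.
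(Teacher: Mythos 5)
Your proposal is correct and takes essentially the same route as the paper: after the shift $\tau$, the propagation bound on $F_t$ lets one lift $T_t$ through $\theta_\kappa$ to the pair $(\breve{T}, \tilde{T}(s))$, Remark \ref{rmk:misc} (3) gives the compatibility $\tilde{h}_L(\breve{T})=\tilde{T}(0)$, and the asymptotic morphisms act pointwise on such representatives, which is exactly how the paper justifies the lemma. One minor slip worth noting: it is $\sigma_0$ (hence the off-diagonal entries and $T_t(0)^2-1$), not $\rho_0$, that vanishes off the collar $N\times[1,2]$, since $\rho_0\equiv\pm1$ elsewhere; this does not affect the argument.
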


\subsubsection*{Step 4}
Finally here we prove $\mu^{\CWY}_0=\mu^{\MF}_0$. To this end, we recall the isomorphism $\bK(L^2(M_2, S_{E, \cV })) \cong C^*(\tilde{M}_2)^\Gamma$ following to the idea given in \cite[Lemma 2.3]{MR1909514}.  

\begin{lem}\label{lem:completion}
Let $\tilde{D}$ denote the Dirac operator on $\tilde{M}_\infty$. Then there is a $\ast$-isomorphism $\varphi \colon\bK(L^2(M_2, S_{E, \cV })) \to C^*(\tilde{M}_2)^\Gamma $ satisfying
\[ \varphi (\sigma _s F_\cV \sigma_s) = \sigma _s \chi (\tilde{D}) \sigma _s \in \cM(C^*(\tilde{M}_2)^\Gamma ). \]
\end{lem}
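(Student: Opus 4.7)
The plan is to use a concrete model for the Hilbert $C^*\Gamma$-module $L^2(M_2, S_{E,\cV})$ in terms of equivariant sections on the cover $\tilde{M}_2$, identify its compact operators with $C^*(\tilde{M}_2)^\Gamma$, and then transport the functional calculus of the Dirac operator via the Fourier representation of $\chi$.

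First, I would realize $L^2(M_2, S_{E,\cV})$ as the completion of the pre-Hilbert $C^*\Gamma$-module $C_c(\tilde{M}_2, \tilde{S}_{\tilde{E}})$ (where $\tilde{S}_{\tilde{E}}$ is the lift of $S_E$) equipped with the inner product
\[ \langle \xi, \eta \rangle = \sum_{\gamma \in \Gamma} \bigl( \xi, U_\gamma \eta \bigr)_{L^2(\tilde{M}_2, \tilde{S}_{\tilde{E}})} \, u_\gamma, \]
where $U_\gamma$ is the deck transformation action and the sum is finite when $\xi, \eta$ are compactly supported. The map $\xi \mapsto [x \mapsto [\tilde{x}, \xi(\tilde{x})]]$ extends to a unitary isomorphism of Hilbert $C^*\Gamma$-modules: this is the standard Mishchenko--Fomenko picture of the Mishchenko line bundle.

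Next, I would identify $\bK$ with $C^*(\tilde{M}_2)^\Gamma$. A rank-one operator $\theta_{\xi,\eta}$ with $\xi,\eta \in C_c(\tilde{M}_2, \tilde{S}_{\tilde{E}})$, acting on a compactly supported section $v$ by $\theta_{\xi,\eta}(v) = \xi \cdot \langle \eta, v\rangle$, is realized on $L^2(\tilde{M}_2, \tilde{S}_{\tilde{E}})$ as the integral operator with the $\Gamma$-invariant finite-propagation kernel $K(x,y) = \sum_{\gamma \in \Gamma} \xi(x) \otimes \overline{(U_\gamma \eta)(y)}$. The linear span of such operators coincides with a dense $\ast$-ideal of $\bC[\tilde{M}_2]^\Gamma$, and the norm it inherits as operators on the Hilbert $C^*\Gamma$-module agrees with the maximal C*-norm used in the definition of $C^*(\tilde{M}_2)^\Gamma$. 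Completing produces the desired $\ast$-isomorphism $\varphi$.

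Finally, under $\varphi$, the twisted Dirac operator $D_\cV$ on the Hilbert $C^*\Gamma$-module is by construction the closure of the $\Gamma$-equivariant Dirac operator $\tilde{D}$ acting on $C_c^\infty(\tilde{M}_2, \tilde{S}_{\tilde{E}})$, so $e^{iuD_\cV}$ and $e^{iu\tilde{D}}$ correspond as solutions of the same hyperbolic equation. Since $\hat{\chi}$ has compact support in $[-1,1]$, the Fourier representation
\[ \chi(D_\cV) = \frac{1}{2\pi} \int_{-1}^{1} \hat{\chi}(u) \, e^{iuD_\cV}\, du \]
combined with the unit propagation speed of the wave operator (cf.\ \cite[Proposition 2.2]{MR996446}) yields $\varphi(\chi(D_\cV)) = \chi(\tilde{D}) \in \cM(C^*(\tilde{M}_2)^\Gamma)$ with propagation at most $1$. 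Multiplying both sides by $\sigma_s$, viewed as a bounded function on $M_2$ lifted $\Gamma$-invariantly to $\tilde{M}_2$, gives the claimed identity. The main delicate point is verifying that the operator norm on the Hilbert $C^*_{\max}\Gamma$-module matches the maximal C*-norm on $\bC[\tilde{M}_2]^\Gamma$ rather than the weaker reduced norm; this is precisely where the use of maximal group C*-algebras throughout the paper is essential, as already exploited in \cite{MR1909514}.
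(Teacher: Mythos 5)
Your proposal is essentially sound and parallels the paper's overall strategy --- identify a dense subalgebra of $\bK(L^2(M_2,S_{E,\cV}))$ with a dense subalgebra of the algebraic equivariant Roe algebra, use maximality to extend, then match the two Dirac functional calculi on a common core --- but it implements both steps differently. For the isomorphism, you use the $C_c(\tilde{M}_2,\tilde{S}_{\tilde{E}})$-model of the Mishchenko module and averaged rank-one kernels, while the paper cuts down $\bK(L^2(M_2,S_E))\otimes_{\mathrm{alg}}\bR[\Gamma]\otimes\bM_I$ by the projection $P_\cV$ coming from a finite trivializing cover and implements $\varphi$ spatially as $\varphi(x)=U(x\otimes_\lambda 1)U^*$; the norm identification you call the ``main delicate point'' is then disposed of by the universality of both maximal completions (alternatively it follows from the isomorphism $\zeta_{\tilde{M}_2}\colon C^*\Gamma\otimes\bK(L^2(\Sigma))\to C^*(\tilde{M}_2)^\Gamma$ of Subsection \ref{section:2.1}, under which your module becomes the standard module $L^2(\Sigma,S_E)\otimes C^*\Gamma$ and your kernel picture becomes $\zeta_{\tilde{M}_2}$ itself). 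You leave exactly this point as an assertion; that is tolerable in an outline, but it is the real content of the lemma and should be argued, e.g.\ as just indicated. For the second step, you use unit propagation speed and uniqueness for the wave equation, whereas the paper deduces $U(\chi(D_\cV)\otimes_\lambda 1)U^*=\chi(\tilde{D})$ from the uniqueness of the self-adjoint extension of $\tilde{D}$ \cite{MR720933}; your route works, but it needs the self-adjointness inputs (\cite{MR3449594} for $D_\cV$ on the Hilbert module, \cite{MR720933} for $\tilde{D}$) to define and compare the two unitary groups on the core $C_c^\infty$, and it needs $\varphi$ extended to multiplier algebras even to state $\varphi(\sigma_s F_\cV\sigma_s)$ --- automatic in the paper's spatial picture, but worth a sentence in yours (check the identity against rank-one operators $\theta_{\xi,\eta}$ with smooth compactly supported $\xi,\eta$). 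Two slips to correct: the Dirac operators live over $M_\infty$ and $\tilde{M}_\infty$, not $M_2$ (only multiplication by $\sigma_s$ lands the statement in $\cM(C^*(\tilde{M}_2)^\Gamma)$); and the kernel of $\theta_{\xi,\eta}$ is the $\Gamma$-average $\sum_\gamma(U_\gamma\xi)(x)\otimes\overline{(U_\gamma\eta)(y)}$, while the formula you wrote, $\sum_\gamma\xi(x)\otimes\overline{(U_\gamma\eta)(y)}$, is neither $\Gamma$-invariant nor of finite propagation, so the span you describe is a dense $\ast$-subalgebra only after this correction.
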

\begin{proof}
First we construct the $\ast$-isomorphism $\varphi$.  
Let $\cU:= \{ U_\mu\}_{\mu \in I}$ be a finite open cover of $X$ such that the restriction of $\tilde{X}$ to each $U_\mu$ is a trivial bundle and let $\gamma _{\mu \nu}$ denote the transformation function. 
Let $\{ \eta_\mu \}_{\mu \in I}$ be a family of continuous functions such that $\mathrm{supp} (\eta_\mu) \subset U_\mu$, $0 \leq \eta_\mu(x) \leq 1$ and $\sum \eta_\mu ^2 =1$. We write $\bM_I$ for the matrix algebra on $\bC^I$ and let $\{ e_{\mu \nu}\}_{\mu , \nu \in I}$ denote the matrix unit. Then, 
\begin{align*}
P_\cV := \sum_{\mu , \nu \in I} \eta_\mu \eta_\nu \otimes u_{\gamma_{\mu \nu}} \otimes e_{\mu \nu} \in C(X) \otimes_{\mathrm{alg}} \bR [\Gamma] \otimes \bM_I \label{form:projMF}
\end{align*}
is a projection whose support is isomorphic to $\cV$ as Hilbert $C^*\Gamma$-module bundles on $X$. Now we define a dense $\ast$-subalgebra
\begin{align*}
\bK_{\mathrm{alg}}(L^2(M_2, S_{E, \cV })) &:= P_\cV (\bK(L^2(M_2, S_E)) \otimes_{\mathrm{alg}} \bR [\Gamma ] \otimes \bM_I)P_\cV 
\end{align*}
of $\bK(L^2(M_2, S_{E, \cV }))$. Let $\lambda$ denote the left regular representation of $\Gamma$. 
The bundle isomorphism $\cV \otimes_\lambda \ell^2(\Gamma) \cong \tilde{M}_r \times _{\lambda } \ell^2(\Gamma )$ induces a unitary isomorphism 
\[ U \colon L^2(M_r , S_{E, \cV }) \otimes_{\lambda }\ell^2(\Gamma) \to L^2(\tilde{M}_r , S_E) \] 
for any $r \in [0,\infty]$. Then,  $\varphi(x):= U(x \otimes _\lambda 1)U^*$ gives a bijection 
\[ \varphi \colon \bK_{\mathrm{alg}}(L^2(M_2, S_{E, \cV })) \to \bR [\tilde{M}_2]^\Gamma, \]
which extends to a $\ast$-isomorphism $\varphi \colon \bK(L^2(M_2, S_{E, \cV })) \to C^*(\tilde{M}_2)^\Gamma $ by the universality of both sides. 
It also induces the $\ast$-isomorphism between their multiplier algebras.

Since $U$ comes from an isomorphism of flat bundles of Hilbert spaces, we have $U(D_{\cV} \otimes _\lambda 1)U^*$ coincides with $\tilde{D}$ on $C_c^\infty(\tilde{M}_\infty , S_E)$. By the uniqueness of self-adjoint extension of $\tilde{D}$ shown in \cite[Theorem 1.17]{MR720933}, they coincide as closed self-adjoint operators. That is, 
\[ U(\chi(D_{\cV}) \otimes _\lambda 1)U^* = \chi(\tilde{D}) \in D^*_{\mathrm{alg}}(\tilde{M}_\infty)^\Gamma \]
holds. Now the lemma is proved by multiplying with $\sigma_s$ from both sides. 
\end{proof}

Now, $\tilde{T}(s)$ and 
\[ \varphi(T_\cV(s))= \begin{pmatrix}\rho_s & \sigma _s \chi (\tilde{D})^1 \sigma _s  \\ \sigma _s \chi (\tilde{D})^0 \sigma _s  & -\rho_s \end{pmatrix}\]
are $0$-th order pseudo-differential operators on $\tilde{M}$ with the same principal symbol by Remark \ref{rmk:misc} (4). Hence we get $\tilde{T}(s) - \varphi (T_\cV(s)) \in \bM_{2N}(C^*(\tilde{M})^\Gamma )$. Similarly we also get $\breve{T} - \varphi (T_\cW) \in \bM_{2N}(C^*(\tilde{N})^\Lambda )$. This concludes the proof of $\mu^{\CWY}_0=\mu^{\MF}_0$.

\subsection{The proof of $\mu^{\mathrm{DG}}_*=\mu^{\mathrm{MF}}_*$}
\subsubsection*{Step 1}
We apply the theory of noncommutative spectral section developed in \cite{MR1979016} to choose a useful perturbation of $\bD_\cV$.

\begin{lem}\label{lem:APS}
There exists a smoothing operator $C \in \Psi^{-\infty}(N, S_{E, \cV })$ such that $\bD_\cV + C$ is invertible and $\ind_b(D_\cV , C)=0$.
\end{lem}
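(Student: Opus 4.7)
The plan is to apply the noncommutative spectral section machinery of Wu \cite{MR1601842} and Leichtnam--Piazza \cite{MR1979016} to produce the desired perturbation in two stages: first secure invertibility, then eliminate the residual $b$-index via the torsor action of $\KR_0(C^*\Gamma)$ on the space of such perturbations.

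First, I would invoke \cite[Proposition 10]{MR1601842} (or the higher APS index theorem of \cite[Theorem 3]{MR1979016}) to obtain an initial noncommutative spectral section $P_0$ for $\bD_\cV$, with an associated smoothing perturbation $C_0 \in \Psi^{-\infty}(N, S_{E,\cV})$ making $\bD_\cV + C_0$ an invertible regular self-adjoint operator on the Hilbert $C^*\Gamma$-module $L^2(N, S^0_{E,\cV})$. Set
\[ x := \ind_b(D_\cV, C_0) \in \KR_0(C^*\Gamma). \]
The problem is now to modify $C_0$ within the class of invertible perturbations so as to shift the $b$-index by $-x$.

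For this I would use the fact from \cite{MR1979016} that the set of noncommutative spectral sections for $\bD_\cV$ carries a torsor structure over $\KR_0(C^*\Gamma)$: given $P_0$ and any prescribed element $\xi \in \KR_0(C^*\Gamma)$, there exists another spectral section $P_1$ with relative index $[P_1 - P_0] = \xi$, and the corresponding smoothing perturbation $C_1$ satisfies the relative index formula
\[ \ind_b(D_\cV, C_1) = \ind_b(D_\cV, C_0) + [P_1 - P_0]. \]
Applying this with $\xi = -x$ yields a smoothing perturbation $C := C_1$ with $\bD_\cV + C$ invertible and $\ind_b(D_\cV, C) = 0$, which is the desired conclusion.

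The main obstacle is to ensure that the torsor structure on spectral sections and the relative index formula apply verbatim in the present setting of the Dirac operator twisted by $E \otimes \cV$ on the cylindrical-end manifold $M_\infty$; however, this is precisely the generality handled in the noncommutative APS papers \cite{MR1601842, MR1979016}, so the argument reduces to citing those results once the identifications of bundles and symbol calculi are matched.
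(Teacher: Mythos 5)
Your strategy is the same as the paper's: start from a spectral section $\cP$ with its invertible perturbation $C_\cP$ (via \cite[Proposition 2.10]{MR1601842}), then use the identification $\ind_b(D_\cV,C_\cP)=\ind_\APS(D_\cV,\cP)$ and the relative-index/spectral-flow formula of \cite{MR1979016} to reduce the problem to producing a second spectral section whose difference class against $\cP$ cancels the $b$-index. Up to sign conventions, your relative index formula is exactly the combination of Theorems 5 and 6 of \cite{MR1979016} that the paper uses.

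The gap is in the step you dispose of by citing a ``torsor structure'': the existence, for an arbitrary prescribed class $\xi\in\KR_0(C^*\Gamma)$, of a noncommutative spectral section $\cQ$ with $[\cP-\cQ]=\xi$. In the noncommutative (Hilbert $C^*\Gamma$-module) setting this is not available as a stated theorem in the cited references, and it is precisely the point where the paper's proof does its real work. The paper argues as follows: from (the proof of) \cite[Theorem 2]{MR1979016}, both $\cP$ and $1-\cP$ are full projections in $\bB(L^2(N,S_{E,\cV}))$, so the given class can be realized as $[p]-[q]$ with compact projections $p\leq 1-\cP$ and $q\leq\cP$; one then forms the projection $\cQ':=\cP+p-q$, which differs from $\bD_\cV(1+\bD_\cV^2)^{-1/2}$ by a compact operator, and invokes an approximation statement extracted from the proof (not the statement) of \cite[Theorem 3]{MR1979016} to find a genuine spectral section $\cQ$ with $\|\cQ'-\cQ\|<1/2$, hence $[\cQ'-\cQ]=0$ and $[\cQ-\cP]=[p]-[q]$ as required. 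Without some argument of this kind (fullness plus approximation, or an explicit construction of spectral sections realizing a prescribed difference class, as in the Melrose--Piazza family case), your appeal to the torsor property begs the main question, so you should either supply this argument or locate a reference where the noncommutative torsor statement is actually proved.
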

Before the proof, we recall the $\K$-theory class determined by a difference of infinite projections. For a pair of projections $(P,Q) \in \bB(L^2(N, S_{E, \cV}))$ such that $P-Q$ is compact, we define the difference class $[P-Q] \in \K_*(C^*\Gamma ) $ as in \cite[Remark 1]{MR733641}. Note that, if $Q=P-p+q$ by compact projections $p \leq P$ and $q \leq 1-P$, then $[P-Q]=[p]-[q]$. 
\begin{proof}
Let $\cP$ be a noncommutative spectral section of $\bD_\cV$ (in the sense of \cite[Definition 3]{MR1979016}). By \cite[Proposition 2.10]{MR1601842}, there is a self-adjoint smoothing operator $C_\cP \in \Psi^\infty (N, S_{E , \cV})$ such that $\bD_\cV + C_\cP$ is invertible and its positive spectral projection is $\cP$. 
By the spectral flow formula
\[ \ind _{\APS} (D_\cV , \cQ) - \ind_{\APS}(D_\cV , \cP )= [\cP - \cQ ]  \]
shown in \cite[Theorem 5]{MR1979016} and 
\[\ind_b(D_\cV, C_\cP)= \ind_\APS (D_\cV , \cP) \]
shown in \cite[Theorem 6]{MR1979016}, it suffices to show that there is another spectral section $\cQ$ such that $[\cP - \cQ]=-\ind_{\APS}(D_\cV , \cP)$. Indeed, $C_\cQ$ is the desired smoothing operator.

It is shown in (the proof of) \cite[Theorem 2]{MR1979016} that $\cP$ and $1-\cP$ are full projections in $\bB(L^2(N, S_{E , \cV}))$. 
Hence there are compact projections $p \leq 1-\cP$ and $q \leq \cP$ such that $[p]-[q]= \ind _b (D_\cV ,C) $. 
Let $\cQ':=\cP + p -q$. 
The proof of \cite[Theorem 3]{MR1979016} actually claims that, for any projection $\cQ'$ such that $\cQ'  - \bD_\cV(1+\bD_\cV^2)^{-1/2}$ is compact, there is a noncommutative spectral section $\cQ$ of $\bD_\cV$ such that $\| \cQ'- \cQ \| < 1/2$ and hence $[\cQ' - \cQ ]=0$. Consequently we get $[\cQ - \cP ] = [\cQ' - \cP] = [p]-[q]=\ind_{\APS }(D, \cP)$.
\end{proof}

\subsubsection*{Step 2}
In this step we provide a slightly tricky replacement of a representative of $\mu_0^{\mathrm{MF}}([M,\id ,E])$ in the way that it is supported on the cylinder $N \times [0,\infty)$. 
For a continuous function $f \in C_b(M_\infty)$ and $s\in [0,1)$, let $f^s$ denote the function given by $f^s|_X=f|_X$ and $f(y,r)=f(y,sr)$.

Let $C \in \Psi^{-\infty}(N, S_{E , \cV})$ be a smoothing operator as in Lemma \ref{lem:APS} and let $\tilde{C} := (1-\eta)c(v)C  \in \bB(L^2(M_\infty , S_{E, \cV}))$ as in Subsection \ref{section:2.2}. Then, there is an odd self-adjoint smoothing operator $A \in \Psi^{-\infty}_c(M_\infty , S_{E, \cV})$ with compact support such that $(D+\tilde{C}+ A)^2 \geq \lambda \cdot 1$ for some $\lambda >0$. 

Let $\chi$ be as in (\ref{form:Ft}) and let $\chi_s(t):=\chi((1-s)^{-1}t)$. Set \begin{align*}
\tilde{F}_{\cV, C}(s, \lambda ):=\chi _s(D_\cV + s\tilde{C} + \lambda A) \in \bB(L^2(M_\infty , S_{E, \cV})).
\end{align*}
Then,  $\tilde{F}_\cV(0, 0) = F_{\cV}$ and $\| \tilde{F}_\cV(s, 1)^2 - 1\| \to 0 $ as $s \to 1$ hold.

\begin{lem}\label{lem:diffineq}
We have 
\[ \| [\rho^{(1-s)^2}_\tau, F_{\cV , C}(s, \lambda ) ] \| \to 0 \text{ as $s \to 1$} \]
uniformly on $(\tau, \lambda) \in [0,1] \times [0,1]$.
\end{lem}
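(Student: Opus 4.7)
The plan is to use the Fourier representation of the functional calculus, exploiting both the finite propagation of $\chi_s(D')$ and the smallness of the gradient of $\rho^{(1-s)^2}_\tau$. Since $\hat{\chi}$ is supported in $[-1,1]$, a substitution gives $\hat{\chi}_s(u) = (1-s)\hat{\chi}((1-s)u)$, so
\[ \chi_s(D') = \frac{1}{2\pi} \int \hat{\chi}_s(u)\, e^{iuD'}\,du, \qquad D':= D_\cV + s\tilde{C}+ \lambda A. \]
Whenever $[\rho^{(1-s)^2}_\tau, D']$ is bounded, the Duhamel formula $[\rho, e^{iuD'}] = i\int_0^u e^{i(u-t)D'}[\rho,D']e^{itD'}\,dt$ yields $\|[\rho^{(1-s)^2}_\tau,e^{iuD'}]\| \le |u|\cdot\|[\rho^{(1-s)^2}_\tau,D']\|$. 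Substituting and changing variables $v=(1-s)u$ gives
\[ \bigl\| [\rho^{(1-s)^2}_\tau, \chi_s(D')] \bigr\| \;\le\; \frac{\|[\rho^{(1-s)^2}_\tau, D']\|}{2\pi(1-s)}\int |\hat{\chi}(v)|\,|v|\,dv. \]
Thus it suffices to prove the Lipschitz-type bound $\|[\rho^{(1-s)^2}_\tau, D']\| = O((1-s)^2)$ uniformly in $(\tau, \lambda)\in[0,1]^2$.

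I would then expand $[\rho^{(1-s)^2}_\tau,D'] = [\rho^{(1-s)^2}_\tau, D_\cV] + s[\rho^{(1-s)^2}_\tau,\tilde{C}] + \lambda[\rho^{(1-s)^2}_\tau, A]$ and treat each term separately. On the cylindrical end $N\times[0,\infty)$ the product metric gives $D_\cV = c(v)(\partial_r + \bD_\cV)$, and since $\rho^{(1-s)^2}_\tau$ is constant on $X$ and a function of $r$ alone on $Y\times[0,\infty)$, we get $[\rho^{(1-s)^2}_\tau, D_\cV] = c(v)\,\partial_r \rho^{(1-s)^2}_\tau$. The explicit formula $\rho_\tau(r) = \min\{1, 2\tau+2r-3\}$ on $Y_2'$ gives $\|\partial_r \rho_\tau\|_\infty \le 2$, so the chain rule yields $\|\partial_r \rho^{(1-s)^2}_\tau\|_\infty \le 2(1-s)^2$, uniformly in $\tau$. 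For the perturbation $\tilde{C} = (1-\eta)c(v)C$ with $C$ a smoothing operator on $N$, the operator acts fiberwise with respect to the $r$-coordinate on the cylinder and is supported there; hence it commutes exactly with multiplication by any function of $r$ alone, giving $[\rho^{(1-s)^2}_\tau,\tilde{C}]=0$. Finally, since $A$ is compactly supported, there is $R>0$ with $\supp(A)\subset M\cup (N\times[0,R])$, and as soon as $(1-s)^2 R \le 1$, the rescaled function $\rho^{(1-s)^2}_\tau$ is identically equal to $2\tau-1$ on $\supp(A)$, so $[\rho^{(1-s)^2}_\tau,A]=0$. Combining these three estimates shows $\|[\rho^{(1-s)^2}_\tau,D']\|\le 2(1-s)^2$ for all $s$ sufficiently close to $1$, uniformly in $(\tau,\lambda)$.

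Plugging into the displayed inequality produces a bound of order $(1-s)\to 0$, which proves the lemma. The main obstacle is not any single hard estimate but rather the bookkeeping needed to be confident that the off-diagonal contributions vanish: one must verify that $\tilde{C}$ really acts fiberwise on the cylinder (so the commutator is exactly zero, not merely small, which is essential because the Fourier weight $\int|\hat\chi_s(u)||u|\,du$ blows up like $(1-s)^{-1}$ and only leaves one power of $(1-s)$ to play with), and that the non-constant region of $\rho^{(1-s)^2}_\tau$ migrates out to $r\sim (1-s)^{-2}$ fast enough to eventually miss the fixed compact support of $A$. Both facts are consequences of the product structure of the cylindrical end built into the definition of $\rho_s$ and the fact that the rescaling in $\rho^{(1-s)^2}_\tau$ dilates this end by the factor $(1-s)^{-2}$.
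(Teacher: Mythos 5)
Your argument is correct and is essentially the paper's own proof: the same Fourier--Duhamel commutator estimate (costing a factor $(1-s)^{-1}$) combined with the decomposition of $[\rho^{(1-s)^2}_\tau, D_\cV+s\tilde{C}+\lambda A]$ into the Clifford term of size $O((1-s)^2)$ and the $\tilde{C}$- and $A$-terms, which vanish because $\tilde C$ acts fiberwise on the cylinder and $\rho^{(1-s)^2}_\tau$ is constant on the fixed compact support of $A$ once $s$ is close to $1$. Your phrasing of the $A$-term (constancy of $\rho^{(1-s)^2}_\tau$ equal to $2\tau-1$ on $\supp A$) is in fact a cleaner statement of what the paper writes as $A(\rho^{(1-s)^2}_\tau+1)=(\rho^{(1-s)^2}_\tau+1)A=0$.
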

\begin{proof}
By definition of $\tilde{C}$, it commutes with $\rho_\tau^{(1-s)^2}$. Hence the commutator
\[  [D_\cV + s\tilde{C} + \lambda A , \rho _\tau^{(1-s)^2}] = c(d\rho_\tau ^{(1-s)^2}) + s[ \kappa A , \rho_\tau^{(1-s)^2}] \]
(where $c(d\rho_\tau^{(1-s)^2})$ is the Clifford multiplication of the $1$-form $d(\rho_\tau^{(1-s)^2})$ on the spinor bundle $S$) has the norm less than $\| d(\rho^{(1-s)^2}_\tau) \| =(1-s)^2$ for sufficiently large $s \in [0,1)$ such that $A (\rho_\tau ^{(1-s)^2} +1)=(\rho_\tau^{(1-s)^2} +1)A =0$. By the formula $[A, e^B]= \int_0^1 e^{\tau B}[A,B]e^{(1-\tau )B}d\tau $, we get
\begin{align*}
&\| [e^{iu(1-s)^{-1}(D_\cV + s\tilde{C} + \lambda A)} , \rho^{(1-s)^2}_\tau] \| \\
 \leq& (1-s)^{-1}\int _0^1 \| [iu(D_\cV + s\tilde{C} + \lambda A) , \rho_\tau ^{(1-s)^2}] \| d\tau \\
 \leq& |u| (1-s).
\end{align*}
Therefore we obtain that
\begin{align*}
\| [\rho^{(1-s)^2}_\tau, F_{\cV , C}(s, \lambda) ]\| & \leq  \frac{1}{2\pi}\int_{u \in \bR} \hat{\chi}(u)\| [e^{iu(1-s)^{-1}(D_\cV + s\tilde{C} + \lambda A)} , \rho^{(1-s)^2}_\tau] \| du \\
& \leq (1-s)\frac{1}{2\pi}\int_{t \in \bR } |\hat{\chi}(u)||u| du \to 0 \text{ as $s \to 1$}. \qedhere
\end{align*}
\end{proof}

For $s \in [0,1)$ and $\kappa \in [0,1]$, let
\begin{align*}
\tilde{T}^1_\kappa (s)&:= 
\begin{pmatrix}
\rho_s^{(1-s\kappa)^2 } &  \sigma_s^{(1-s\kappa)^2} \tilde{F} _\cV(\kappa s,\kappa s)^1 \sigma_s ^{(1-s\kappa)^2} \\
\sigma _s ^{(1-s\kappa)^2} \tilde{F}_{\cV }(\kappa s, \kappa s)^0 \sigma _s ^{(1-s\kappa)^2 } & -\rho _s ^{(1-s\kappa)^2} 
\end{pmatrix}_{\textstyle ,} \\
\tilde{T}^2_\kappa (s)&:= 
\begin{pmatrix}
\rho_{s\kappa }^{(1-s)^2} &  \sigma_ {s\kappa } ^{(1-s)^2} \tilde{F}_\cV(s,s)^1 \sigma_ {s\kappa }^{(1-s)^2} \\
\sigma ^{(1-s)^2} _ {s\kappa }  \tilde{F}_{\cV }(s,s)^0 \sigma ^{(1-s)^2} _ {s\kappa } & -\rho ^{(1-s)^2} _ {s\kappa } 
\end{pmatrix}_{\textstyle ,} \\
\tilde{T}^3_\kappa (s)&:=
\begin{pmatrix}
\rho_{0}^{(1-s)^2} &  \sigma_ {0} ^{(1-s)^2} \tilde{F}_\cV(s, s\kappa  )^1 \sigma_ {0}^{(1-s)^2} \\
\sigma ^{(1-s)^2} _ {0}  \tilde{F}_{\cV }(s, s\kappa  )^0 \sigma ^{(1-s)^2} _ {0} & -\rho ^{(1-s)^2} _ {0} 
\end{pmatrix} _{\textstyle .}
\end{align*}
Then, they satisfy 
\begin{enumerate}
\item $\tilde{T}^i_\kappa (0)=T_\cV(0)$, 
\item $(\tilde{T}^i_\kappa (s))^2-1 \in \bK(L^2(M_\infty , S_{E, \cV}))$ for any $s\in [0,1)$ and
\item $\| (\tilde{T}^i_\kappa (s))^2-1 \| \to 0$ as $s \to 1$,
\end{enumerate}
for any $\kappa  \in [0,1]$ and $i=1,2,3$. Indeed, (3) for $\tilde{T}^2_\kappa$ and $\tilde{T}^3_\kappa$ follows from Lemma \ref{lem:diffineq}, $\| \tilde{F}_{\cV}(s, 1)^2 -1 \| \to 0$ and 
\[ \| \sigma^{(1-s)^2}_0 (F(s, \lambda)^0 - F(s,1)^0) \sigma^{(1-s)^2}_0 \| \to 0 \text{ as $s \to 1$ }\]
for any $\lambda \in [0,1]$, which follows from $F(s, \lambda) - F(s,1) \in \bK(L^2(M_\infty , S_{E , \cV}))$.

Now, (1), (2) and (3) means that $(T_\cW, \tilde{T}_\kappa ^i(s)) \in \cM(C\phi)$ satisfies 
\[ (T_\cW, \tilde{T}_\kappa ^i(s))^2 -1 \in C\phi\]
for any $\kappa  \in [0,1]$ and $i=1,2,3$. Since $\tilde{T}^1_1 = \tilde{T}^2_1$ and $\tilde{T}^2_0 = \tilde{T}^3_1$, we get 
\[ \mu^{\MF}_0([M, \id, E])= [(T_\cW , \tilde{T}_0^1(s))] = [(T_\cW, \tilde{T}^3_0(s))] \in \KR_0 (\cQ(L^2(M_\infty, S_{E, \cE} ))). \]  
Note that 
\begin{align} \tilde{T}_0^3(s)= \begin{pmatrix}\rho_0^{(1-s)^2} & \sigma_0^{(1-s)^2} \chi_s(D_{\cV}+sC) ^* \sigma_0^{(1-s)^2} \\ \sigma_0^{(1-s)^2} \chi^s(D_{\cV}+sC) \sigma_0^{(1-s)^2} & -\rho_0^{(1-s)^2} \end{pmatrix}_{\textstyle .} \label{form:operator1} \end{align}

\subsubsection*{Step 3}
Next we use unbounded $\KK$-theory \cite{MR715325} in order to give another presentation of the element $c(\bD_\cW , C) = \mu^\DG_0([M,\id, E])$.
Let $S_{E, \cX}$ denote the Hilbert $C\phi$-module bundle $S_E \otimes \cX$ on $N$. Consider the regular self-adjoint operators
\begin{align*}
\bD_{\cX, C} &:= (\bD_\cW, (1-s)^{-1}(\bD_{\cV}+sC)), \\
D_{\cX , C} &:= (D_\cW , (1-s)^{-1}(D_{\cV , \infty} + sc(v)C) ) \\
&= c(v)\Big( (1-s)^{-1}\frac{d}{dr} + \bD_{\cX ,C} \Big),
\end{align*}
acting on $L^2(N , S_{E, \cX})$ and $L^2(N \times \bR , S_{E , \cX})$ respectively. They have compact resolvent and $[L^2(N,S_{E, \cX}), 1 , \bD_{\cX, C}]$ determines an real self-adjoint unbounded Kasparov $\bR$-$C\phi$ bimodule representing $c(\bD_\cW, C)$. Let $\alpha$ denote the Bott generator $[L^2(\bR), m , i\frac{d}{dt}] \in \KKR _1 (C_0(\bR), \bR)$ (here $m$ denote the multiplication representation of $C_0(\bR)$). By \cite[Th\'{e}or\`{e}me 3.2]{MR715325}, we have
\begin{align*} 
\alpha \otimes c(\bD_\cW , C) &= \bigg[ L^2(N \times \bR , S_{E, \cX}), m, \begin{pmatrix} 0 & - \frac{d}{dr} + \bD_{\cX, C} \\ \frac{d}{dr} + \bD_{\cX, C} & 0\end{pmatrix}
\bigg]\\
&= \bigg[ L^2(N \times \bR , S_{E , \cX}), m_{1-s}, \begin{pmatrix} 0 & - \frac{d}{dr} + \bD_{\cX, C} \\ \frac{d}{dr} + \bD_{\cX, C} & 0\end{pmatrix}
\bigg] \\
&= \bigg[ L^2(N \times \bR , S_{E , \cX}), m_{ (1-s)^2 }, \begin{pmatrix} 0 & D_{\cX, C}^* \\ D_{\cX, C} & 0\end{pmatrix}
\bigg]_{\textstyle , }
\end{align*}
where $m_s(f)$ denote multiplication by $f^s$. Here, the second equality is given by a continuous path $ \{ m_{1-\kappa s} \}_{\kappa \in [0,1]}$ of $\ast$-homomorphisms and the third equality is given by the adjoint with respect to the unitary $U_s(f)(x):=sf((1-s)^{-1}x)$ on $L^2(\bR)$.

The corresponding bounded Kasparov bimodule is
\[  \bigg[ L^2(N \times \bR , S_{E, \cX} ), m_{(1-s)^2} ,  \begin{pmatrix} 0 & \chi(D_{\cX , C}) \\ \chi(D_{\cX, C}) & 0 \end{pmatrix}
 \bigg]_{\textstyle .} \]
Recall that $\chi(D_{\cX,C}) = (\chi(D_\cW), \chi_s(D_{\cV,\infty}+sC))$. By Lemma \ref{lem:Kas}, we get
\begin{align*}
c(\bD_\cW, C)&=\beta \otimes_{C_0(\bR)} \alpha \otimes c(\bD_\cW, C)\\
&= [ L^2(N \times \bR , S_{E, \cX} ) , 1 , (T_\cW, T_\cV'(s)) ], 
\end{align*}
where \footnotesize
\begin{align}  T_\cV'(s)=  \begin{pmatrix} \rho_0^{(1-s)^2} & \sigma_0^{(1-s)^2} \chi_s(D_{\cV , \infty} +sc(v)C ) ^* \sigma_0^{(1-s)^2} \\ \sigma_0^{(1-s)^2} \chi_s(D_{\cV , \infty} +sc(v)C) \sigma_0^{(1-s)^2} & -\rho_0^{(1-s)^2} \end{pmatrix}_{\textstyle .} \label{form:operator2} \end{align} \normalsize

\subsubsection*{Step 4}
Finally we show the representative of in Step 2 is canonically identified with the representative of $c(\bD_\cW , C)$ given in Step 3.
More precisely, let $V_1 \colon  L^2(N_\infty , S_{E, \cX}) \to L^2(N\times \bR , S_{E, \cX})$ and $V_2 \colon L^2(N_\infty , S_{E, \cX}) \to L^2(M_\infty, S_{E, \cE})$ denote the canonical isometries. Then $\Ad (V_1)$ and $\Ad (V_2)$ induce isomorphisms of $\K$-theory as
\begin{align*} 
\begin{split}
\xymatrix@R=0.5em{
&\KR_0(\cQ( L^2(N\times \bR , S_{E, \cX}))) \\
\KR_0(\cQ (L^2(N_\infty  , S_{E, \cX}))) \ar[ru]^{\Ad (V_1)} \ar[rd]_{\Ad (V_2)} & \\
& \KR_0 (\cQ (L^2(M_\infty, S_{E, \cE}))). 
}
\end{split}
\label{form:identK}\end{align*}
\begin{lem}
We have 
\begin{enumerate}
\item $(T_\cW , T_\cV'(s))  = V_1V_1^* (T_\cW , T_\cV'(s))  V_1V_1^* + (1-V_1V_1^*) \big( \begin{smallmatrix} 1 & 0 \\ 0 & -1 \end{smallmatrix} \big)$, 
\item $ ( T_\cW , \tilde{T}_0^3(s)) = V_2V_2^* ( T_\cW , \tilde{T}_0^3(s))V_2V_2^* + (1-V_2V_2^*)\big( \begin{smallmatrix} 1 & 0 \\ 0 & -1 \end{smallmatrix} \big)$, and
\item $V^*_1(T_\cW , T_\cV'(s)) V_1=  V_2^* ( T_\cW , \tilde{T}_0^3(s))V_2$. 
\end{enumerate}
\end{lem}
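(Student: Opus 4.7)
The plan is to deduce all three statements from the support properties of $\sigma_0^{(1-s)^2}$ combined with the finite-propagation estimate for $\chi_s$. Concretely, I aim to show that $(T_\cW, T_\cV'(s))$ and $(T_\cW, \tilde{T}_0^3(s))$ both decompose as the direct sum of a common operator on the cylindrical submodule $L^2(N_\infty, S_{E,\cX})$ and the trivial diagonal matrix $\bigl(\begin{smallmatrix} 1 & 0 \\ 0 & -1 \end{smallmatrix}\bigr)$ on the respective complements, after which (3) reduces to identifying the two common blocks.

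For (1), I would first note that $\sigma_0^{(1-s)^2}$ is supported on $N \times [(1-s)^{-2}, 2(1-s)^{-2}]$ and therefore inside $V_1 L^2(N_\infty, S_{E,\cX})$ for every $s \in [0,1)$. This makes the off-diagonal entries of $T_\cV'(s)$ vanish on the complement $(1 - V_1V_1^*)L^2(N \times \bR, S_{E,\cX})$, while the diagonal entries $\pm \rho_0^{(1-s)^2}$ there reduce to constants $\mp 1$ by the convention extending $\rho_0 \equiv -1$ for $r \leq 1$ (in particular for $r < 0$). Invariance of $V_1 L^2(N_\infty)$ itself uses the propagation bound $\mathrm{Prop}(\chi_s) \leq (1-s)^{-1}$ from the Fourier identity (\ref{form:Ft}): vectors exiting the first $\sigma_0^{(1-s)^2}$ sit in $\{r \geq (1-s)^{-2}\}$, spread under $\chi_s$ only down to $\{r \geq s/(1-s)^2\} \subset \{r \geq 0\}$, and the outer $\sigma_0^{(1-s)^2}$ restricts them back. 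Claim (2) is proved by the identical argument with $M \subset M_\infty$ replacing $N \times (-\infty,0)$; the extra ingredient is that $\tilde{C} \equiv 0$ on $M$ since $\eta \equiv 1$ there.

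Claim (3) is the heart of the matter. I plan to compute both $V_1^*(T_\cW, T_\cV'(s))V_1$ and $V_2^*(T_\cW, \tilde{T}_0^3(s))V_2$ explicitly on $L^2(N_\infty, S_{E,\cX})$: the $T_\cW$ summand and the diagonal entries $\pm \rho_0^{(1-s)^2}|_{N_\infty}$ match tautologically, so the task reduces to equality of the off-diagonal entries $\sigma_0^{(1-s)^2} \chi_s(D_{\mathrm{op}}) \sigma_0^{(1-s)^2}$ in the two cases. Both $D_\cV + s\tilde{C}$ and $D_{\cV,\infty} + sc(v)C$ have the cylindrical form $c(v)(\tfrac{d}{dr}+\bD_\cV) + s(\text{perturbation})$ on $N_\infty$, and the two perturbations coincide on $\{r \geq \delta\}$ for any $\delta$ beyond $\mathrm{supp}(\eta)$. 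A Fourier-transform argument parallel to Lemma \ref{lem:cptsupp} then forces the sandwiched expression to depend only on the operator restricted to the $(1-s)^{-1}$-neighborhood of the $\sigma_0^{(1-s)^2}$-support, giving equality whenever $s/(1-s)^2 \geq \delta$.

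The main technical obstacle is that for $s$ close to $0$ this inequality fails, so the propagation ball can reach into $\{r < \delta\}$ where the two perturbations differ by $s\eta(r)c(v)C$. I would handle this by exploiting the flexibility in the choice of $\eta$: since the only constraints are $\eta \equiv 1$ on $M$ and $\mathrm{supp}(\eta) \subset M_1^\circ$, the support of $\eta$ can be shrunk arbitrarily close to $N$. At $s=0$ both perturbations vanish identically and the cylindrical Dirac operators already agree on $N_\infty$, handling the critical endpoint. A homotopy of $\eta$ (which does not change the Kasparov class represented by $\tilde{C}$) then allows the inequality $s/(1-s)^2 \geq \delta$ to hold in the relevant range, completing the identification and concluding the proof.
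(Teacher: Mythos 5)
Your claims (1)--(2) and the core of (3) follow the same route as the paper: (1)--(2) are immediate because both outer factors $\sigma_0^{(1-s)^2}$ (and the support of $1-|\rho_0^{(1-s)^2}|$) live in $N\times[0,\infty)$ resp.\ away from $M$ and $N\times(-\infty,0)$ -- note that no propagation estimate is needed there, since the off--diagonal entries are killed by the cutoffs on \emph{both} sides -- and (3) is a locality statement: on $N_\infty$ both operators have the cylindrical form $c(v)\bigl(\tfrac{d}{dr}+\bD_\cV+sC\bigr)$ away from $\supp\eta$, and the compression by $\sigma_0^{(1-s)^2}$ only sees the operator within the propagation radius $(1-s)^{-1}$ coming from the Fourier formula (\ref{form:Ft}). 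The paper implements exactly this by writing down the cylinder wave solution explicitly (splitting into the $\pm i$--eigenspaces of $c(v)$ and conjugating translations by $e^{\mp it(\bD_\cV+sC)}$), so up to this point you and the paper agree.

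The gap is in your final paragraph. Your quantitative observation is correct (and is a subtlety the paper's own support bookkeeping also glosses over: its estimate only keeps the wave in $N\times[0,\infty)$, not in the region where $\eta$ vanishes): the cone of radius $(1-s)^{-1}$ issuing from $\supp\sigma_0^{(1-s)^2}\subset N\times[(1-s)^{-2},2(1-s)^{-2}]$ reaches down to $r=s(1-s)^{-2}$, which for small $s$ meets the collar where $\eta\neq 0$ and where $s\tilde C$ and $sc(v)C$ differ. But your repair does not close this: for any \emph{fixed} transition width $\delta>0$ of $\eta$, the inequality $s/(1-s)^2\geq\delta$ fails for all sufficiently small $s>0$, so no single shrinking or homotopy of $\eta$ makes the propagation argument work on the whole parameter interval; you would need $\eta=\eta_s$ with width $\delta(s)\to 0$ as $s\to 0$, and then you are no longer proving the stated identity for the operators fixed in Steps~2--3 but a statement about a modified family, which needs a further argument (norm continuity in $(s,\kappa)$, the uniform almost--unitarity conditions as $s\to1$ along the modification, and compatibility with the representatives of Step~2) that you do not supply. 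The remark that a homotopy of $\eta$ ``does not change the Kasparov class represented by $\tilde C$'' is not the relevant statement, since the lemma is an exact operator identity between two concrete representatives. If you want a class-level repair instead, the cleaner observation is that by the same propagation bookkeeping the discrepancy between the two compressions is built from $s\eta c(v)C$ with $C\in\Psi^{-\infty}$, hence is a norm-continuous family of compact operators which vanishes at $s=0$ and identically once $s/(1-s)^2\geq 1$; such a family lies in the mapping-cone ideal and does not change the class -- but that argument, too, would have to be made explicitly rather than by appeal to flexibility in $\eta$.
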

This shows 
\[ [(T_\cW, T_\cV')]=[(T_\cW , \tilde{T}_0^3)]\]
which completes the proof of $\mu_0^\MF ([M,\id, E]) = \mu_0^\DG ([M,\id , E])$ by Step 2 and 3.
\begin{proof}
The claim (1) and (2) are obvious from the fact that support of $\rho_0^{(1-s)^2}$ lies in $N \times [0,\infty)$. Hereafter we show (3).

Let us compare (\ref{form:operator1}) and (\ref{form:operator2}).
Since the function $\sigma_0^{(1-s)^2}$ is supported on $N \times [(1-s)^{-1}, \infty)$, it is enough to see that
\begin{align*}  \chi_s(D_{\cV} + s\tilde{C})V_1 \xi = \chi_s(D_{\cV ,\infty} + sc(v)C)V_2 \xi  \end{align*}
holds for a smooth section $\xi \in C_c^\infty (N_\infty , S_{E , \cX})$ supported on $N \times [(1-s)^{-1} , \infty)$. 

Let $S_{E,\cX}=S_{E, \cX}^{+} \oplus S_{E, \cX}^{-}$ denote the eigenspace of $c(v)$ with the eigenvalue $\pm i$.
We decompose $\xi \in C_c^\infty (N \times \bR , S_{E , \cX})$ as $\xi=\xi^+ + \xi^-$ along this bundle decomposition and define $\xi_t \in C_c^\infty (N \times \bR , S_{E , \cX})$ as
\[ \xi_t(y,r):=e^{-it(\bD_{\cV} + sC)} \xi^+ (y ,r-t ) + e^{it(\bD_{\cV} + sC)} \xi^- (y ,r+t ). \]
Then it satisfies $\supp (\xi_t) \subset N \times [(1-s)^{-1}-t , \infty)$ and 
\begin{align*}
\frac{d}{dt}\Big|_{t=t_0} V_1 \xi_t &= ic(v) \Big( \frac{d}{dr} + \bD_{\cV} + sC \Big) \xi_{t_0} = i(D_{\cV} + s \tilde{C}) V_1  \xi_{t_0}, \\
\frac{d}{dt}\Big|_{t=t_0}  V_2 \xi_t &= i c(v) \Big(\frac{d}{dr} + \bD_{\cV} + sC \Big) \xi_{t_0} =  i(D_{\cV, \infty}+ sc(v)C) V_2 \xi_{t_0}. \\
\end{align*}
That is, $e^{it (D_{\cV} + s\tilde{C})}V_1 \xi = V_1 \xi_t$ and $e^{it (D_{\cV , \infty}+ sc(v)C)} V_2\xi = V_2 \xi_t$ holds. This completes the proof by the Fourier transform presentation (\ref{form:Ft}).
\end{proof}

\section{Partitioning and the K-theoretic van Kampen theorem}\label{section:9}
In this section, we relate the relative higher index with the higher index of the amalgamated free product group. 
Let $\phi_1 \colon \Lambda \to \Gamma _1 $ and $\phi_2 \colon \Lambda \to \Gamma _2$ be two homomorphisms of groups. Here we assume that $\phi_1$ and $\phi_2$ are injectve. Let $f_1 \colon (X^1, Y) \to (B\Gamma_1, B\Lambda )$ and $f_2 \colon (X^2,Y) \to (B\Gamma _2 , B\Lambda)$ be continuous maps such that $f_1|_Y=f_2|_Y$. Set $\Gamma := \Gamma _1 \ast_\Lambda \Gamma _2$ and $\hat{X}:= X_2^1 \sqcup X_2^2 / \sim$, where the relation $\sim$ identifies $(y,1+r) \in Y_2' \subset X^1_2$ with $(y, 2-r) \in Y_2' \subset X^2_2$ for $r \in [0,1]$. Then $f_1$ and $f_2$ gives rise to a continuous map 
\[ f \colon X \to B\Gamma_1 \sqcup_{B\Lambda}B\Gamma_2 = B\Gamma  \] 
and hence we have assembly maps
\begin{align*}
\mu _*^{\Gamma _i, \Lambda} & \colon \K_*(X^i ,Y) \to \K_*(C^*(\Gamma_i , \Lambda)),
 \\
\mu_*^\Gamma & \colon \K_*(\hat{X}) \to \K_*(C^*(\Gamma )). 
\end{align*}
Now, we have the Mayer--Vietoris exact sequence 
\[\cdots \to \K_*(Y) \to \K_*(\hat{X}) \to \K_*(X^1,Y) \oplus \K_*(X^2, Y) \to \K _{*-1}(Y) \to \cdots .\]

There is a corresponding exact sequence of group C*-algebras. This follows from the KK-equivalence
\begin{align*}
C^*(\Gamma _1 \ast_\Lambda \Gamma _2) \sim_{\KK}&  SC(\phi _1 \oplus \phi_2) \\
=& C^*(\Gamma _1 ,\Lambda) \oplus _{SC^*(\Lambda)}C^*(\Gamma _2 , \Lambda ), 
\end{align*}
which is first proved implicitly by Pimsner \cite{MR860685}. It is pointed out in \cite{MR1426836} that this $\KK$-equivalence is given by the inclusion $C(\phi_1 \oplus \phi_2 ) \to C^*(\Gamma)(-1,1)$ mapping $(a_1,a_2) \in C(\phi_1 \oplus \phi_2)$ to
\[ a(s) := \left\{ \begin{array}{ll}a_1(s) & s \in [0,1), \\ a_2(-s) & s \in (-1,0]. \end{array} \right. \]
Then, we have $\ast$-homomorphisms
\[ \psi_i \colon S^{0,1}C(\phi _1 \oplus \phi_2 ) \to S^{0,1}C\phi _i , \ \ \psi_i(f_1,f_2)=f_i, \]
for $i=1,2$. 

\begin{prp}\label{prp:cut}
The diagram
\[\xymatrix@C=1em{
\cdots \ar[r] &  \KO_*(Y) \ar[r]^{i_*} \ar[d]^{\mu^\Lambda_*} &  \KO_*(\hat{X}) \ar[d]^{\mu^\Gamma_*} \ar[rr] ^{(j_1)_* \oplus -(j_2)_* \ \ \  } && {\begin{array}{c} \K_*(X^1,Y)\\ \oplus\\ \KO_*(X^2, Y)\end{array}} \ar[rr]^{\partial_1 \oplus \partial_2 } \ar[d]^{\mu^{\Gamma _1,\Lambda }_* \oplus \mu^{\Gamma _2, \Lambda}_*} && \K O_{*-1}(Y) \ar[r] \ar[d]^{\mu_{*-1}^\Lambda} & \cdots \\
\cdots \ar[r] & \KR_*(C^*(\Lambda)) \ar[r]^{\phi_*} & \KR_*(C^*(\Gamma)) \ar[rr]^{(\psi_1)_* \oplus (\psi_2)_* \ \ } && {\begin{array}{c}\KR_*(C^*(\Gamma_1, \Lambda)) \\ \oplus \\ \KR_*(C^*(\Gamma _2, \Lambda)) \end{array}} \ar[rr] ^{(\theta _1)_* \oplus (\theta_2)_*} && \KR_{*-1}(C^*(\Lambda) ) \ar[r] & \cdots 
}\]
commutes.
\end{prp}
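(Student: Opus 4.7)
The plan is to apply Theorem \ref{thm:equal} so that every vertical arrow is rewritten as the Kasparov product with an appropriate Mishchenko line bundle, and then verify commutativity of each of the three squares separately.

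For the leftmost square, the reference map $\hat{X} \to B\Gamma$ restricts on $Y$ to the composition $Y \to B\Lambda \xrightarrow{B\phi} B\Gamma$, so the pullback $i^*(\ell_\Gamma)$ is canonically identified with $\phi_*(\ell_\Lambda)$ in $\KKR(\bR, C(Y) \otimes C^*\Gamma)$. Associativity of the Kasparov product then yields
\[ \mu^\Gamma(i_*(y)) = i^*(\ell_\Gamma) \hotimes_{C(Y)} y = \phi_*(\ell_\Lambda) \hotimes_{C(Y)} y = \phi_*(\mu^\Lambda(y)). \]

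For the middle square, the idea is to build a global Hilbert module bundle $\hat{\cE}$ over $\hat{X}$ with coefficients in $SC(\phi_1 \oplus \phi_2)$ by gluing the relative constructions $(\sE, \rho)$ from \eqref{form:E} for $(X^1, Y)$ and $(X^2, Y)$ across the cylindrical identification $(y, 1+r) \sim (y, 2-r)$. The resulting class $\hat{\ell} = [\hat{\cE}, 1, \hat{\rho}] \in \KKR_{-1}(\bR, C(\hat{X}) \otimes SC(\phi_1 \oplus \phi_2))$ should enjoy two key features. First, the composition with the projection $\psi_i \colon SC(\phi_1 \oplus \phi_2) \to C^*(\Gamma_i, \Lambda)$ followed by restriction to $X^i \subset \hat{X}$ recovers the relative Mishchenko bundle $\ell_{\Gamma_i, \Lambda}$, the sign in $(j_1)_* \oplus -(j_2)_*$ being accounted for by the parameter reversal on $X^2$. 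Second, via the $\KK$-equivalence $C^*\Gamma \sim_{\KK} SC(\phi_1 \oplus \phi_2)$ induced by the Pimsner-type inclusion $C(\phi_1 \oplus \phi_2) \hookrightarrow C^*\Gamma(-1,1)$, the class $\hat{\ell}$ corresponds to the ordinary Mishchenko bundle $\ell_\Gamma$. Once both identifications are in hand, associativity of the Kasparov product yields the commutativity of the middle square.

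The right square is handled analogously. Both boundary operations---the topological one for the pair $(\hat{X}, Y)$ and the algebraic $\theta_i$ from the Puppe sequence for $\phi_i$---arise by evaluation of the respective cylinder parameters at their endpoints. Restricting $\hat{\cE}$ to the collar neighborhood of $Y$ exhibits both boundaries simultaneously and matches them up with $\mu^\Lambda_{*-1}$ via $\ell_\Lambda$.

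The main obstacle is the middle square: tracking the Pimsner $\KK$-equivalence explicitly enough at the level of Kasparov bimodules to identify $\ell_\Gamma$ with $\hat{\ell}$. The bookkeeping of the $s$-variable of $SC\phi_i$ together with the $(-1,1)$-parameter of Pimsner's embedding, compounded by the orientation asymmetry between $X^1$ and $X^2$, is the delicate point that will demand care.
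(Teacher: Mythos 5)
Your proposal is correct and follows essentially the same route as the paper: the paper likewise reduces everything to the middle square and proves it by gluing the two relative Mishchenko modules into a class $\ell_{\Gamma_1,\Lambda,\Gamma_2}=[\hat{\sE},1,\hat{\rho}]$ over $\hat{X}$ with coefficients in $C(\phi_1\oplus\phi_2)$, whose images under $\psi_1$, $\psi_2$ (with the sign from the parameter reversal on $X^2$) and under the Pimsner inclusion $C(\phi_1\oplus\phi_2)\to C^*\Gamma(-1,1)$ are exactly $\ell_{\Gamma_1,\Lambda}$, $-\ell_{\Gamma_2,\Lambda}$ and $\ell_\Gamma\otimes\beta$, so associativity of the Kasparov product finishes the argument. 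The delicate point you flag is handled in the paper by the explicit piecewise formula for $\hat{\rho}$, which makes these identifications hold essentially by inspection.
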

\begin{proof}
It suffices to show that the middle square commutes. Let $\sE^1_2$ and $\sE^2_2$ denote the Hilbert $C_0((X_2^i)^\circ) \otimes C\phi_i$-module as in (\ref{form:E}) for $X^1$ and $X^2$ respectively and set
\[ \hat{\sE}:= \{ (\xi_1, \xi_2) \in \sE^1_2 \oplus \sE_2^2 \mid \xi_1(y,1+r,0) =\xi_2(y,2-r,0) \text{ for any $r \in [0,1]$} \}, \]
which is canonically regarded as a Hilbert $C(\hat{X}) \otimes C(\phi_1 \oplus \phi_2)$-module. For $i=1,2$, we write $r_i$ for continuous functions on $\hat{X}$ extending $r$ on $X_2^i$ as $r_i \equiv 2$ on $\hat{X} \setminus X_2^i$.  We define $\hat{\rho} \in C(\hat{X})(-1,1)$ as
\[ \hat{\rho} (x,  s) = \left\{ \begin{array}{ll}\rho(r_1(x) ,s) & s \in [0,1) \\ - \rho (r_2(x),-s) & s \in (-1,0]. \end{array} \right. \]
\begin{figure}[t]
\begin{tikzpicture}[scale=0.4]
\shade [top color = black, bottom color = black, middle color = white, shading angle = 135] (-3,-4) rectangle (9,4);
\shade [top color = black!35!white, bottom color = black!35!white, middle color = white] (-3,0) rectangle (0,4);
\shade [top color = black!35!white, bottom color = black!35!white, middle color = white] (6,0) rectangle (9,-4);
\fill [color = black!35!white] (6,0) -- (6,4) -- (0,4) -- (6,0);
\fill [color = black!35!white] (6,-4) -- (0,-4) -- (0,0) -- (6,-4);
\fill [color = white] (-3.1,-4.1) rectangle (0,0);
\fill [color = white] (9.1,4.1) rectangle (6,0);
\draw [->] (-4.5,-4.3) -- (-4.5,4.5);
\draw [->] (9,-6) -- (-2,-6);
\draw [->] (-3,6) -- (8,6);
\draw [thick,dashed] (-3,4) -- (6,4);
\draw [thick, dashed] (6,4) -- (6,0);
\draw [thick, dashed] (6,0) -- (9,0);
\draw [thick,dashed] (0,0) -- (-3,0);
\draw [thick,dashed] (0,0) -- (0,-4);
\draw [thick,dashed] (0,-4) -- (9,-4);
\node at (-1.4,-5.6) {$r_2$};
\node at (7.4,5.6) {$r_1$};
\node at (-3.9,4.3) {$s$};
\node at (-1.5,5) {$X_1$};
\node at (3,5) {$Y_2'$};
\node at (7.5,-5) {$X_1$};
\node at (3,-5) {$Y_2'$};
\fill (6,-6) coordinate (O) circle[radius=2pt] node[below=2pt]  {$1$};
\fill (0,-6) coordinate (O) circle[radius=2pt] node[below=2pt]  {$2$};
\fill (6,6) coordinate (O) circle[radius=2pt] node[above=2pt]  {$2$};
\fill (0,6) coordinate (O) circle[radius=2pt] node[above=2pt]  {$1$};
\fill (-4.5,4) coordinate (O) circle[radius=1pt] node[left=2pt]  {$1$};
\fill (-4.5,0) coordinate (O) circle[radius=1pt] node[left=2pt]  {$0$};
\fill (-4.5,-4) coordinate (O) circle[radius=1pt] node[left=2pt]  {$-1$};
\end{tikzpicture}
\caption{The shading shows the value of $| \tilde{\rho}(r,s) |$.}
\end{figure}
Then the triplet $(\hat{\sE}, 1 , \hat{\rho})$ determines a real self-adjoint Kasparov bimodule 
\[\ell_{\Gamma_1, \Lambda , \Gamma _2} := [ \hat{\sE},1, \hat\rho ] \in \KKR_1(\bC, C(X) \otimes C(\phi_1 \oplus \phi_2)).\]

By definition we have $\ell_{\Gamma _1, \Lambda , \Gamma _2} \otimes [\psi_1] = \ell_{\Gamma , \Lambda }$ and $\ell_{\Gamma _1, \Lambda , \Gamma _2 } \otimes [\psi_2] = -\beta_{\Gamma _2, \Lambda }$. Moreover
\[\ell_{\Gamma_1, \Lambda , \Gamma_2 } \otimes [\iota] = [C_0(\hat{X} , \hat{\cV})(-1,1), 1 , 2s-1 ]= \ell_\Gamma \otimes \beta, \]
where $\hat{\cV}$ is the Mishchenko line bundle on $\hat{X}$ with respect to the universal covering (with the fiber $\pi_1(\hat{X}) \cong \Gamma$).
That is, the Kasparov product with $\ell_{\Gamma _1 ,\Lambda , \Gamma _2}$ is equal to the higher index $\mu^\Gamma_*$. Now the proof is completed.
\end{proof}

\begin{thm}\label{thm:cut}
Let $M^1$ and $M^2$ be two spin manifolds with the same boundary $N$ such that $\Lambda := \pi_1(N) \to \pi_1(M^i) =:\Gamma _i$ are injective. Let $M:= M^1 \sqcup _N M^2$ and $\Gamma:=\Gamma _1 \ast _\Lambda \Gamma _2$. Then, 
\[(\psi_i)_* \circ \mu ^{\Gamma }_*([M])=\mu_*^{\Gamma _i, \Lambda}([M^i, N]). \]
In particular, the non-vanishing of  $\mu _*^{\Gamma _i, \Lambda }([M^i, N])$ for one of $i=1,2$ implies $\mu_*^\Gamma ([M]) \neq 0$.
\end{thm}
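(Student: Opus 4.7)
The plan is to reduce the theorem to Proposition~\ref{prp:cut} via the observation that the $\KO$-homology class of $M$ in $\hat{X}$ restricts, under the two Mayer--Vietoris maps $(j_1)_*$ and $-(j_2)_*$, to the relative Baum--Douglas classes $[M^1,N]$ and $[M^2,N]$. The injectivity hypothesis on $\phi_i$ together with the van~Kampen theorem gives $\pi_1(M)\cong\Gamma_1*_{\Lambda}\Gamma_2=\Gamma$, so we can choose classifying maps $f_i\colon(M^i,N)\to(X^i,Y)$ with $f_1|_N=f_2|_N$. These glue to a continuous map $f\colon M\to\hat{X}$ classifying the universal $\Gamma$-covering; the class denoted $[M]$ in the theorem is $f_*[M,\mathrm{id},\underline{\bR}]\in\KO_*(\hat{X})$ in the Baum--Douglas model.

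Next, I would identify $(j_i)_*[M]\in\KO_*(X^i,Y)$ geometrically. In the geometric model of relative $\KO$-homology, a cycle for $(X^i,Y)$ is a spin manifold with boundary together with a map of pairs. The collapse $j_i\colon\hat{X}\to X^i/Y$ underlying the Mayer--Vietoris sequence crushes $X_2^{3-i}$ to the basepoint, and composing with $f$ yields precisely the cycle $(M^i,f_i,\underline{\bR})$ on one side and a degenerate (collar-only) cycle on the other, which is a geometric boundary. Thus, up to the sign dictated by the Mayer--Vietoris orientation convention that already appears as $(j_1)_*\oplus -(j_2)_*$ in Proposition~\ref{prp:cut}, we have $(j_1)_*[M]=[M^1,N]$ and $(j_2)_*[M]=-[M^2,N]$, so both signs cancel consistently with the signs on the C*-algebra side.

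Finally, applying the commutativity of the middle square of Proposition~\ref{prp:cut} to $[M]\in\KO_*(\hat{X})$ yields, for each $i=1,2$,
\[
(\psi_i)_*\bigl(\mu_*^{\Gamma}([M])\bigr) \;=\; \mu_*^{\Gamma_i,\Lambda}\bigl([M^i,N]\bigr),
\]
which is the displayed identity. The non-vanishing conclusion is then immediate: if $\mu_*^{\Gamma_i,\Lambda}([M^i,N])\neq 0$ for some $i$, the equation forces $(\psi_i)_*\mu_*^{\Gamma}([M])\neq 0$, hence a fortiori $\mu_*^{\Gamma}([M])\neq 0$. The main obstacle is the second step, the geometric identification of $(j_i)_*[M]$ with $[M^i,N]$ in the Baum--Douglas picture with correct orientation bookkeeping; the rest is a direct pullback of Proposition~\ref{prp:cut}.
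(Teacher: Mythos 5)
Your proposal is correct and follows essentially the same route as the paper: the paper's proof is exactly "apply Proposition \ref{prp:cut} together with the identification $(j_i)_*([M])=[M^i,N]$," and your argument is an expanded version of this, with the sign bookkeeping absorbed into the Mayer--Vietoris convention $(j_1)_*\oplus -(j_2)_*$ just as in the proposition. The geometric identification of $(j_i)_*[M]$ with the relative Baum--Douglas class of $(M^i,N)$, which you flag as the main obstacle, is asserted without further elaboration in the paper as well.
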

\begin{proof}
It follows from Proposition \ref{prp:cut} and $(j_i)_*([M])=[M^i,N]$.
\end{proof}

\begin{cor}\label{cor:hyper}
Let $M$ be a closed spin manifold partitioned to $M=M^1 \sqcup_N M^2$ by an oriented hypersurface $N$. Let $\Lambda :=\pi_1(N)$ and $\Gamma:=\pi_1(M)$. We assume that $\Lambda \to \Gamma $ is injective. Then $\mu_{*-1}^\Lambda([N]) \neq 0$ implies the nonvanishing of $\mu_{*}^\Gamma([M])$. 
\end{cor}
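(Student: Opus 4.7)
The plan is to reduce the corollary to Theorem \ref{thm:cut}, using property (\ref{cond:long}) to supply the nonvanishing hypothesis that Theorem \ref{thm:cut} requires.

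First I would verify that the framework of Theorem \ref{thm:cut} applies to the given decomposition. Setting $\Gamma_i := \pi_1(M^i)$, van Kampen's theorem (applied component-wise if necessary) identifies $\Gamma$ with the amalgamated free product $\Gamma_1 \ast_\Lambda \Gamma_2$. The injectivity hypothesis on $\Lambda \to \Gamma$, combined with the factorization $\Lambda \to \Gamma_i \to \Gamma$, forces $\Lambda \to \Gamma_i$ to be injective for each $i = 1, 2$, which is precisely the standing assumption of Theorem \ref{thm:cut}.

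Next, for each $i$, I would apply the long-exact-sequence compatibility (\ref{cond:long}) to the class $[M^i, N] \in \KO_*(X^i, N)$, pushed forward along a classifying map to $(B\Gamma_i, B\Lambda)$. Since $M^i$ is a spin manifold with boundary $N$, the Baum--Douglas boundary map sends $[M^i, N]$ to $\pm [N]$, so the commutativity condition (\ref{cond:long}) yields
\[ \theta_* \, \mu_*^{\Gamma_i, \Lambda}([M^i, N]) \;=\; \mu^\Lambda_{*-1}(\partial [M^i, N]) \;=\; \pm\, \mu^\Lambda_{*-1}([N]). \]
By hypothesis the right-hand side is nonzero, hence $\mu_*^{\Gamma_i, \Lambda}([M^i, N]) \neq 0$. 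Applying Theorem \ref{thm:cut} for either $i$ then delivers $\mu_*^\Gamma([M]) \neq 0$, which is the desired conclusion.

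There is no serious obstacle in this plan: the substantive work has already been carried out in Theorem \ref{thm:cut} (itself resting on Proposition \ref{prp:cut} and Pimsner's $\KK$-equivalence). The only minor point of care concerns connectedness and base-points needed to invoke van Kampen cleanly; if either $M^1$ or $M^2$ is disconnected, one restricts attention to the component meeting $N$ and handles remaining components separately, using that the K-homology class of a disjoint union splits accordingly.
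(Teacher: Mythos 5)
Your argument is correct and is essentially the paper's own proof: the paper likewise deduces the corollary from Theorem \ref{thm:cut} together with the commutativity (\ref{cond:long}), noting that injectivity of $\Lambda \to \Gamma$ gives injectivity of $\Lambda \to \pi_1(M^i)$. Your write-up merely spells out the intermediate step $\theta_*\,\mu_*^{\Gamma_i,\Lambda}([M^i,N]) = \mu_{*-1}^\Lambda([N]) \neq 0$, which the paper leaves implicit.
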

\begin{proof}
It follows from Theorem \ref{thm:cut} and the commutative diagram (\ref{cond:long}). Note that $\Lambda \to \Gamma$ is injective if and only if both $\Lambda \to \pi_1(M^1)$ and $\Lambda \to \pi_1(M^2)$ are injective.
\end{proof}
This is analogous to the partitioned manifold index theorem proved in a completely different way to existing approaches such as \cite{MR1817560,MR2670972}. In particular, we can apply this corollary to get the non-vanishing of higher indices for manifolds partitioned by an enlargeable manifold.

Theorem \ref{thm:cut} is also applied to invariance of non-vanishing of the higher index under cutting-and-pasting. This is different from the invariance of higher indices under cutting-and-pasting of Galois coverings studied in \cite{MR1905836} in the sense that the fundamental group can be changed.
\begin{cor}
Let $M$ be a closed spin manifold partitioned by an oriented hypersurface $N$. Assume that $\mu_*^{\Gamma}([M])$ is not in $\Im \phi_* \subset \K_*(C^*(\Gamma))$. Let $\psi$ be a diffeomorphism of $N$ preserving the spin structure and let $\hat{M}:=M^1 \sqcup_{\psi} M^2$ be the spin manifold obtained by cutting-and-pasting. Then $\mu_*^{\hat{\Gamma}}([\hat{M}]) \in \KR_*(C^*(\hat{\Gamma}))$ does not vanish, where $\hat{\Gamma}:=\pi_1(\hat{M})$. 
\end{cor}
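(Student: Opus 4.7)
The plan is to derive the corollary from Theorem \ref{thm:cut} applied to both $M$ and $\hat M$, using the exact sequence of Proposition \ref{prp:cut} to exploit the hypothesis.

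First I would invoke exactness of the lower row in Proposition \ref{prp:cut}: $\Im \phi_* = \ker((\psi_1)_* \oplus (\psi_2)_*)$. Therefore the assumption $\mu_*^\Gamma([M]) \notin \Im \phi_*$ is equivalent to $(\psi_i)_* \mu_*^\Gamma([M]) \neq 0$ for some $i \in \{1,2\}$, and by Theorem \ref{thm:cut} this reads $\mu_*^{\Gamma_i, \Lambda}([M^i, N]) \neq 0$. The problem is symmetric in $M^1$ and $M^2$---swapping the two pieces replaces $\psi$ by $\psi^{-1}$ and produces the same manifold $\hat M$ via the obvious diffeomorphism---so I may assume $\mu_*^{\Gamma_1, \Lambda}([M^1, N]) \neq 0$.

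Next I would set up conventions for $\hat M$ that localize the twisting on the $M^2$ side. Fixing diffeomorphisms $\iota_i \colon N \to \partial M^i$, write $M = M^1 \sqcup_{\iota_2 \iota_1^{-1}} M^2$ and $\hat M = M^1 \sqcup_{\iota_2 \psi \iota_1^{-1}} M^2$. With this labeling the inclusion $N \hookrightarrow M^1$, and hence the induced homomorphism $\hat\phi_1 \colon \Lambda \to \hat\Gamma_1 = \Gamma_1$, agree with their counterparts for $M$. Consequently the cycle $(M^1, N)$ together with its intrinsic reference map to $(B\Gamma_1, B\Lambda)$ is literally the same whether $M^1$ is viewed inside $M$ or inside $\hat M$, and
\[ \mu_*^{\hat\Gamma_1, \Lambda}([M^1, N]) = \mu_*^{\Gamma_1, \Lambda}([M^1, N]) \neq 0. \]

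Finally, applying Theorem \ref{thm:cut} to $\hat M$ with $i=1$ yields $(\hat\psi_1)_* \mu_*^{\hat\Gamma}([\hat M]) = \mu_*^{\hat\Gamma_1, \Lambda}([M^1, N]) \neq 0$, so $\mu_*^{\hat\Gamma}([\hat M]) \neq 0$. The only subtle point is the bookkeeping in the second step: the twisting by $\psi$ affects only the $\Gamma_2$-side inclusion $\Lambda \to \hat\Gamma_2$, so the relative higher index on the $\Gamma_1$ side is unaffected and can be compared directly between the two gluings. The symmetry reduction in the first step is exactly what permits us to restrict attention to this unaffected side; without it one would have to show that $\mu_*^{\Gamma_2, \Lambda, \phi_2}([M^2, N])$ and $\mu_*^{\Gamma_2, \Lambda, \phi_2 \circ \psi_*}([M^2, N])$ correspond under the canonical Real $\ast$-isomorphism $C^*(\Gamma_2, \Lambda, \phi_2 \circ \psi_*) \cong C^*(\Gamma_2, \Lambda, \phi_2)$ induced by $\psi_*$.
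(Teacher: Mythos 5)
Your argument is correct and is precisely the derivation the paper intends: the corollary is stated as a direct consequence of Proposition \ref{prp:cut} and Theorem \ref{thm:cut}, namely exactness of the lower row turns the hypothesis $\mu_*^{\Gamma}([M])\notin\Im\phi_*$ into nonvanishing of some $\mu_*^{\Gamma_i,\Lambda}([M^i,N])$, which is intrinsic to the pair $(M^i,N)$ (hence unchanged by the regluing) and is detected again by Theorem \ref{thm:cut} applied to $\hat{M}$. Your symmetry reduction placing the twist on the other side, together with the implicit use of the section's standing injectivity assumption (which transfers to the decomposition of $\hat{M}$ since $\psi_*$ is an isomorphism), is exactly the bookkeeping the paper leaves unstated.
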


Finally we go back to the index theory of invertible doubles. As is pointed out at the beginning of this section, a spin manifold with boundary has finite relative K-area if and only if its invertible double has finite K-area. The corresponding result in higher index is the following.
\begin{cor}
Let $M$ be a compact spin manifold with the boundary $N$ and let $\Gamma :=\pi_1(M)$ and $\Lambda :=\pi_1(N)$. Assume that $\phi \colon \Lambda \to \Gamma$ is injective. Then $\mu^{\Gamma, \Lambda}_*([M,N])=0$ if and only if $\mu_*^{\Gamma \ast _\Lambda \Gamma }([\hat{M}])=0$.
\end{cor}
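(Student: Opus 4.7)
The plan is to specialise Theorem~\ref{thm:cut} and Proposition~\ref{prp:cut} to the case $\Gamma_1=\Gamma_2=\Gamma$, $M^1=M^2=M$, so that $\hat M = M\sqcup_N M$ and $\hat\Gamma = \Gamma\ast_\Lambda\Gamma$. With this choice, the two inclusions $\Lambda\hookrightarrow\Gamma_i\hookrightarrow\hat\Gamma$ both yield the same homomorphism~$\phi$, and the two copies of $(M,N)$ in Theorem~\ref{thm:cut} coincide.

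The ``if'' direction is immediate. Theorem~\ref{thm:cut} gives
\[
(\psi_i)_*\mu_*^{\hat\Gamma}([\hat M]) \;=\; \mu_*^{\Gamma,\Lambda}([M,N])
\qquad(i=1,2),
\]
so vanishing of the left-hand side trivially forces vanishing of the right.

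For the ``only if'' direction, assume $\mu_*^{\Gamma,\Lambda}([M,N])=0$. Applying the above for both $i=1,2$ shows that $\mu_*^{\hat\Gamma}([\hat M])$ is killed by both $(\psi_1)_*$ and $(\psi_2)_*$. Exactness of the bottom row of the Mayer--Vietoris diagram of Proposition~\ref{prp:cut} then forces
\[
\mu_*^{\hat\Gamma}([\hat M]) \;\in\; \mathrm{Im}\bigl(\phi_*\colon \KR_*(C^*\Lambda)\to \KR_*(C^*\hat\Gamma)\bigr),
\]
so we can write $\mu_*^{\hat\Gamma}([\hat M])=\phi_*(\xi)$ for some $\xi$. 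To upgrade this to the exact vanishing $\phi_*(\xi)=0$, I would exploit the reflection symmetry $\sigma\colon \hat M\to\hat M$ exchanging the two copies of $M$ together with the corresponding involution $\tau$ of $\hat\Gamma=\Gamma\ast_\Lambda\Gamma$; since $\tau|_\Lambda=\mathrm{id}$ we have $\tau_*\circ\phi_*=\phi_*$, so the element $\phi_*(\xi)$ is $\tau_*$-invariant. On the other hand, naturality gives $\tau_*\mu_*^{\hat\Gamma}([\hat M])=\mu_*^{\hat\Gamma}(\sigma_*[\hat M])$, which is controlled by the geometric behaviour of $\sigma$ on the fundamental class of the invertible double. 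Matching these two identifications should pin down $\xi$ (in fact as a class coming from $\mu_*^\Lambda([N])$, which itself vanishes by the compatibility $\mu_*^\Lambda\circ\partial=\theta_*\circ\mu_*^{\Gamma,\Lambda}$ from condition~(\ref{cond:long})).

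The main obstacle is the final step. The symmetry argument sketched above only exploits an involution fixing the image of $\phi_*$, and depending on signs it may yield only $2\mu_*^{\hat\Gamma}([\hat M])=0$. To get exact vanishing one likely has to combine the $\tau$-equivariance with the fold retraction $q\colon\hat\Gamma\to\Gamma$ (satisfying $q\circ\iota_i=\mathrm{id}_\Gamma$) and its geometric counterpart, the branched cover $\hat M\to M$, in order to identify $\xi$ precisely with $\mu_*^\Lambda([N])$ (or with a chosen lift in $\KR_*(C^*\Lambda)$ whose image in $\KR_*(C^*\hat\Gamma)$ can be shown to vanish on the nose). Once this identification is made, the conclusion $\mu_*^{\hat\Gamma}([\hat M])=\phi_*(\mu_*^\Lambda([N]))=0$ follows.
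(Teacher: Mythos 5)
Your ``if'' direction is fine, but the ``only if'' direction has a genuine gap, which you yourself flag: after deducing from Theorem~\ref{thm:cut} and the exactness of the bottom row in Proposition~\ref{prp:cut} that $\mu_*^{\hat\Gamma}([\hat M])\in\mathrm{Im}\,\phi_*$, you have no mechanism for concluding that this element actually vanishes. Membership in $\mathrm{Im}\,\phi_*$ is far from vanishing, since $\phi_*\colon\KR_*(C^*\Lambda)\to\KR_*(C^*(\Gamma\ast_\Lambda\Gamma))$ need not be zero, and even identifying your $\xi$ with $\mu_*^\Lambda([N])$ (which is not forced, as $\phi_*$ may have kernel) would only tell you that $\mu_*^{\hat\Gamma}([\hat M])$ is the image of a class that is itself zero \emph{if} you already knew $\mu^{\Gamma,\Lambda}_*([M,N])=0$ implies the right lift is chosen --- a circularity. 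The reflection-symmetry argument, as you note, at best yields $2\mu_*^{\hat\Gamma}([\hat M])=0$, and an argument via the fold map $\hat\Gamma\to\Gamma$ alone cannot detect the relative summand.

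The paper closes this gap with two inputs you are missing. First, the diagonal $\ast$-homomorphism $\chi\colon C\phi\to C(\phi\oplus\phi)$, $\chi(a,b_s)=(a,b_s\oplus b_s)$, satisfies $\psi_i\circ\chi=\id_{C\phi}$; this splitting upgrades the Mayer--Vietoris sequence of Proposition~\ref{prp:cut} to a direct sum decomposition
\[
\KR_*(C^*(\Gamma\ast_\Lambda\Gamma))\cong\KR_*(C^*(\Gamma,\Lambda))\oplus\KR_*(C^*\Gamma),
\]
under which $\mu_*^{\Gamma\ast_\Lambda\Gamma}([\hat M])$ corresponds to $\mu_*^{\Gamma,\Lambda}([M,N])\oplus\mu_*^{\Gamma}([\hat M])$ (the first component by Theorem~\ref{thm:cut}). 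Second, the complementary component is killed by an external theorem: the higher index of the invertible double vanishes, $\mu_*^{\Gamma}([\hat M])=0$ (\cite[Theorem 5.1]{MR3122162}). With these two facts the class $\mu_*^{\Gamma\ast_\Lambda\Gamma}([\hat M])$ is pinned down exactly as $\mu_*^{\Gamma,\Lambda}([M,N])$ in the first summand, giving both implications at once; no symmetry or covering argument is needed, and without some substitute for these two ingredients your sketch cannot be completed.
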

\begin{proof}
Let $\chi$ denote the $\ast$-homomorphism $C\phi \to C(\phi \oplus \phi)$ given by $\chi(a,b_s)  = (a,b_s \oplus b_s)$. Then, $\psi_i \circ \chi _* =\id_{C\phi}$ and hence 
\[ \KR_*(C^*(\Gamma \ast_\Lambda \Gamma )) \cong \KR_*(C^*(\Gamma , \Lambda)) \oplus \KR_*(C^*\Gamma ). \] 
Through this isomorphism, $\mu^{\Gamma \ast_\Lambda \Gamma}([\hat{M}])$ is identified with $\mu^{\Gamma , \Lambda}_* ([M,N]) \oplus \mu_* ^{\Gamma}([\hat{M}])$. Since the higher index of the invertible double $\mu ^{\Gamma}_*([\hat{M}])$ vanishes (\cite[Theorem 5.1]{MR3122162}), we complete the proof.
\end{proof}
\begin{cor}
Let $M$ be a compact spin manifold with the boundary $N$. Assume that $\phi \colon \Lambda \to \Gamma$ is injective. If its invertible double $\hat{M}$ is enlargeable, then $\mu^{\Gamma ,\Lambda}_*([M,N])$ does not vanish. In particular, for any spin-oriented open embedding of $M^\circ$ to a closed spin manifold $\bar{M}$ such that $\Lambda \to \pi_1(\bar{M})$ is injective, $\bar{M}$ does not have any metric with positive scalar curvature.
\end{cor}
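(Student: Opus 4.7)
My plan has two parts that dovetail via the equivalence established in the preceding corollary.

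For the non-vanishing of $\mu^{\Gamma,\Lambda}_*([M,N])$ under the enlargeability hypothesis on $\hat M$, I will invoke the Hanke--Schick theorem: an enlargeable closed spin manifold has non-vanishing maximal higher index in $\KR_*(C^*_{\max}(\pi_1(\hat M)))$. The hypothesis that $\phi\colon\Lambda\to\Gamma$ is injective together with Seifert--van Kampen identifies $\pi_1(\hat M)\cong \Gamma\ast_\Lambda\Gamma$, so Hanke--Schick gives $\mu_*^{\Gamma\ast_\Lambda\Gamma}([\hat M])\neq 0$. The previous corollary then instantly forces $\mu^{\Gamma,\Lambda}_*([M,N])\neq 0$, which already establishes the first assertion and in particular rules out a collared metric of positive scalar curvature on $M$ by \cite[Theorem 2.18]{mathKT150603859}.

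For the ``in particular'' part, I argue by contradiction: assume that $\bar M$ admits a psc metric, and set $\bar\Gamma := \pi_1(\bar M)$. By the Lichnerowicz--Rosenberg principle, $\mu_*^{\bar\Gamma}([\bar M]) = 0$. Since $M^\circ\hookrightarrow \bar M$ is an open embedding, a collar-neighborhood argument extends it to a smooth embedding $M\hookrightarrow \bar M$ with $N$ appearing as a smooth separating hypersurface, giving the decomposition $\bar M = M\cup_N \bar M'$ where $\bar M':=\overline{\bar M\setminus M^\circ}$ is a compact spin manifold with boundary $N$. Seifert--van Kampen then yields $\bar\Gamma \cong \Gamma\ast_\Lambda \bar\Gamma_2$ with $\bar\Gamma_2 := \pi_1(\bar M')$, and the assumption that $\Lambda\to\bar\Gamma$ is injective forces $\Lambda\to\bar\Gamma_2$ to be injective, since $\Lambda\to\bar\Gamma$ factors through $\bar\Gamma_2$. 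Now Theorem~\ref{thm:cut} applied to this decomposition gives $(\psi_1)_*\mu_*^{\bar\Gamma}([\bar M]) = \mu_*^{\Gamma,\Lambda}([M,N])$, hence $\mu_*^{\Gamma,\Lambda}([M,N]) = 0$, contradicting the first part.

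The main obstacle is purely topological: one must justify that an abstract spin-oriented open embedding of $M^\circ$ into the closed manifold $\bar M$ yields a bona fide decomposition $\bar M = M\cup_N \bar M'$ to which Theorem~\ref{thm:cut} applies, i.e.\ that $N$ sits inside $\bar M$ as a smooth hypersurface and that $\bar M'$ is a compact spin manifold with boundary $N$. This should follow from the collar neighborhood theorem applied at $N\subset M$ pushed forward by the smooth embedding, though mild additional regularity may have to be imposed on the embedding. Apart from this, both halves of the argument rest only on the previous corollary, the cutting theorem (Theorem~\ref{thm:cut}), and the standard vanishing/non-vanishing input for closed spin manifolds.
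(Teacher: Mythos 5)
Your proposal is correct and is essentially the argument the paper intends (the paper states this corollary without an explicit proof): the Hanke--Schick non-vanishing theorem for the maximal higher index of enlargeable manifolds combined with the preceding invertible-double corollary gives the first claim, and Theorem \ref{thm:cut} applied to the decomposition $\bar M = M \cup_N \overline{\bar M \setminus M^\circ}$, together with the vanishing of the maximal higher index in the presence of positive scalar curvature, gives the second. The collar-neighborhood point you flag is standard and poses no real difficulty.
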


We close this section by introducing an alternative proof of Theorem \ref{thm:cut} using the Deeley--Goffeng description $\mu_*^{\DG}$. Let  $\cV_1$ and $\cV_2$ denote the Mishchenko line bundle on $M^1$ and $M^2$ respectively, let $D_i$ denote the Dirac operator on $M_{\infty}^i$ and let $C_i$ denote the smoothing operator as in Subsection \ref{section:2.2}.
\begin{prp}
Let $M^i$, $N$, $M$, $\Gamma _i$, $\Lambda$ and $\Gamma$ be as in Theorem \ref{thm:cut}. Then 
\[ \mu^\Gamma _* ([M])= (\psi_1)_* \ind _b (D_{\cV_1}, C_1) - (\psi_2)_* \ind_b(D_{\cV_2},C_2)+c(\bD_{\cW} , C_1, C_2). \]
Here, $c(\bD_\cW, C_1,C_2):=[(U, V_s^1 \oplus V_s^2)] \in \KR_1(C(\phi_1 \oplus \phi _2))$, where $(U , V_s^1)$ and $(U,V_s^2)$ are as in Subsection \ref{section:2.2} for $(M^1,N)$ and $(M^2,N)$ respectively.
\end{prp}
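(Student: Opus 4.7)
The plan is to carry out the Deeley--Goffeng decomposition of the higher index (Subsection~\ref{section:2.2}) in the context of the partitioned closed manifold $M = M^1 \sqcup_N M^2$, using the Pimsner $\KK$-equivalence $C^*\Gamma \sim_{\KK} SC(\phi_1 \oplus \phi_2)$ recalled at the start of this section to split the target group. By Theorem~\ref{thm:equal}, $\mu^\Gamma_*([M])$ is representable in the Mishchenko--Fomenko form as a Kasparov product with the Mishchenko line bundle $\cV = \tilde{M} \times_\Gamma C^*\Gamma$ on $M$. The explicit Pimsner embedding $C(\phi_1 \oplus \phi_2) \hookrightarrow C^*\Gamma(-1,1)$ sending $(a_1, a_2)$ to $s \mapsto a_1(s)$ for $s \geq 0$ and $s \mapsto a_2(-s)$ for $s \leq 0$ decomposes elements of $\KR_*(C^*\Gamma)$ into compatible data on either side of $N$; here the parameter $s$ corresponds geometrically to the collar coordinate near $N$, with positive $s$ reading into $M^1$ and negative $s$ reading into $M^2$ with the collar orientation reversed. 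This orientation reversal is exactly the source of the minus sign on the $M^2$ contribution in the claimed formula, mirroring the structure of $\hat{\rho}$ in the proof of Proposition~\ref{prp:cut}.

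With this setup, I would imitate Steps~2--3 of Subsection~\ref{section:3.5}: fix smoothing perturbations $C_i$ from Lemma~\ref{lem:APS} on $N$ (using the canonical identifications $\cV_i|_N \otimes_{\phi_i} C^*\Gamma \cong \cV|_N$, so that either $C_i$ extends to an odd bounded operator on $L^2(M_\infty, S_{E,\cV})$ localized near $N$), and then deform the Mishchenko--Fomenko operator through a chain of homotopies analogous to $\tilde{T}^1_\kappa, \tilde{T}^2_\kappa, \tilde{T}^3_\kappa$. Away from a collar of $N$, the contributions from the two sides identify, under the Pimsner embedding, with the $b$-indices $\ind_b(D_{\cV_1}, C_1)$ and $\ind_b(D_{\cV_2}, C_2)$ pushed forward into the relevant suspension piece of $C(\phi_1 \oplus \phi_2)$, which is the meaning of $(\psi_i)_*$ in the formula.

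The main obstacle is matching the residual contribution coming from the collar of $N$ with the class $c(\bD_\cW, C_1, C_2) = [(U, V_s^1 \oplus V_s^2)]$. I would use the unbounded $\KK$-theory argument of Step~3 of Subsection~\ref{section:3.5}: work on the cylinder $N \times \bR$ with the combined operator $\bD_{\cX, C_1, C_2}$ built from the pair of perturbations, apply Kucerovsky's theorem to compute $\alpha \otimes c(\bD_\cW, C_1, C_2)$ explicitly, and then compare via the partial isometries $V_i$ (as in Step~4 of Subsection~\ref{section:3.5}) to the operator extracted from the deformed Mishchenko--Fomenko representative. The Cayley transforms $V_s^i = \Cay(\bD_{\cV_i, C_i}(s))$ naturally assemble into a transpose-invariant unitary of $(C(\phi_1 \oplus \phi_2))^+$ precisely because $\bD_\cW \otimes_{\phi_i} 1 = \bD_{\cV_i}$ holds on both sides.

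The delicate part of this argument is ensuring that the collar piece contributes \emph{only} the term $c(\bD_\cW, C_1, C_2)$ and no spurious cross-terms, which reduces to verifying compatibility of the Pimsner embedding with the bi-collar product structure near $N$ in $M$. Once this is checked, combining the three pieces and transporting them back to $\KR_*(C^*\Gamma)$ via the Pimsner $\KK$-equivalence yields the claimed formula for $\mu^\Gamma_*([M])$.
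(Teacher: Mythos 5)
Your overall strategy—push the Mishchenko--Fomenko representative of $\mu^\Gamma_*([M])$ through the Pimsner equivalence and split it into two bulk terms and a cylinder term—is reasonable, but the proposal stops exactly where the content of the proposition begins. The statement that the two bulk pieces are precisely $(\psi_i)_*\ind_b(D_{\cV_i},C_i)$ (with the sign) and that the collar piece is precisely $c(\bD_\cW,C_1,C_2)$, with no further correction, \emph{is} the gluing theorem being asserted; you flag it as ``the delicate part'' and defer it (``once this is checked''), so the heart of the argument is missing rather than reduced to something known. Moreover, the reduction you propose is not well posed as stated: in the Pimsner inclusion $C(\phi_1\oplus\phi_2)\to C^*\Gamma(-1,1)$ the variable $s$ is the suspension/cone variable, not the collar coordinate of $N$ in $M$ (compare Proposition \ref{prp:cut}, where $\hat\rho$ depends on both $r_i$ and $s$ and the sign comes from the flip $s\mapsto -s$), so ``compatibility of the Pimsner embedding with the bi-collar product structure'' does not by itself identify the collar contribution. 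There are also smaller slips: $M$ is closed here, so the perturbations $C_i$ should be implanted in a bicollar of $N$ inside $M$ (there is no $M_\infty$ for the glued manifold), and the unbounded-product step in Subsection \ref{section:3.5} is via Baaj--Julg \cite{MR715325} rather than Kucerovsky.

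The paper takes a much shorter route that you could adopt to close the gap. Since the Pimsner inclusion is a $\KK$-equivalence, it suffices to compare images in $\K_1(SC^*\Gamma)\cong\K_0(C^*\Gamma)$. There the image of $c(\bD_\cW,C_1,C_2)$ is the difference class $[\cP_1-\cP_2]$ of the positive spectral projections of $\bD_{\cV_1}$ and $\bD_{\cV_2}$; one has $\ind_b(D_{\cV_i},C_i)=\ind_{\APS}(D_{\cV_i},\cP_i)$ by \cite[Theorem 5]{MR1979016}; and the desired identity becomes exactly the Leichtnam--Piazza gluing formula $\ind(D_{\hat{\cV}})=\ind_{\APS}(D_{\cV_1},\cP_1)-\ind_{\APS}(D_{\cV_2},\cP_2)+[\cP_1-\cP_2]$ of \cite[Theorem 8]{MR1979016}, the left side being $\mu_*^\Gamma([M])$ by Theorem \ref{thm:equal}. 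If you insist on your route instead, you would in effect be reproving this gluing formula by hand via the deformations of Subsection \ref{section:3.5} adapted to two cones at once; that is possible in principle but is precisely the unexecuted step of your proposal, not a routine verification.
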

\begin{proof}
Let $\cP_1 \in \bB(L^2(N, S_{E, \cV_1}))$ and $\cP_2 \in \bB(L^2(N, S_{E, \cV_1}))$ denote the positive spectral subspaces of $\bD_{\cV_1}$ and $\bD_{\cV_2}$ respectively. Then, the image of $c(\bD_\cW, C_1 , C_2)$ in $SC^*\Gamma$ determines an element of $\K_1(SC^*\Gamma)$ with the image $[\cP_1 - \cP_2]$. Now the equality follows from $\ind_{\mathrm{APS}}(D_\cV , \cP)=\ind_b(D_\bV , C)$ (\cite[Theorem 5]{MR1979016}) and the gluing formula 
\[ \ind (D_{\hat{\cV}})= \ind_{\mathrm{APS}}(D_{\cV_1} , \cP_1) - \ind_{\mathrm{APS}}(D_{\cV_2} , C_2) + [\cP_1 - \cP_2]   \]
shown in \cite[Theorem 8]{MR1979016}.
\end{proof}

\section{Rational injectivity and related results}\label{section:4}
In this section, we investigate the rational surjectivity of the universal envelope
\[ \beta _{\Gamma, \Lambda } \colon \K^0(C^*(\Gamma, \Lambda )) \to \K^0 (B\Gamma, B \Lambda ) \]
of the dual relative higher index maps in Definition \ref{defn:dBC}, which will be related with almost flat vector bundles in the second paper \cite{Kubota2}. This follows from the non-degeneracy of the pairing of the relative $\K$-group $\K^*(B\Gamma, B\Lambda )$ and $\K$-homology group of $C^*(\Gamma, \Lambda)$. To this end, we also study the the rational injectivity of the relative assembly map.  

The key ingredient of the proof is the Dirac--dual Dirac method~\cite{MR918241}. 
It is convenient to work in the categorical framework of the $\KK$-theory and the Baum--Connes assembly map introduced in \cite{MR2193334}. 
Let $\Csep^\Gamma $ denote the category of separable $\Gamma$-C*-algebras and equivariant $\ast$-homomorphisms and let $\Kas^\Gamma$ denote the equivariant Kasparov category (the category whose objects are separable $\Gamma$-C*-algebras, morphisms are equivariant $\KK$-group and the composition is given by the Kasparov product). 

The functor $\KK^\Gamma \colon \Csep^\Gamma \to \Kas^\Gamma$ has the universal property \cite{MR899916,MR1803228} that any stable, homotopy invariant and split exact additive functor $F$ from $\Csep^\Gamma$ to an additive category factors through $\KK^\Gamma$. That is, an element $\xi \in \KK^\Gamma(A,B)$ induces a morphism $F(\xi) \colon F(A) \to F(B)$. For example, the composition of the maximal crossed product functor  
\[ j_\Gamma \colon \Csep^\Gamma \to \Csep, \ \ j_{\Gamma}(A) := A \rtimes \Gamma \] 
with $\KK$ factors through an additive functor $j_\Gamma \colon \Kas^\Gamma \to \Kas$. Indeed, this is the same thing as the descent functor in \cite[Theorem 3.11]{MR918241}. 

A homomorphism $\phi \colon \Lambda \to \Gamma $ induces the pull-back functor 
\[ \phi^* \colon \Csep^\Gamma \to \Csep ^\Lambda. \]
For a $\Gamma$-C*-algebra $A$, the $\ast$-homomorphism
\[ \Phi_A\Big( \sum _{\gamma} a_\gamma u_{\gamma} \Big) = \sum_\gamma a_{\gamma} u_{\phi(\gamma)}\]
between algebraic crossed products is extended to a well-defined $\ast$-homomorphism $\Phi _A \colon A \rtimes \Lambda \to A \rtimes \Gamma$. 
Note that $\Phi_\bC $ is the same thing as $\phi \colon C^*\Lambda \to C^*\Gamma$. The family $\{ \Phi_A\}_{A \in \Csep^\Gamma }$ determines a natural transform from $j_\Lambda \circ \phi ^*$ to $j_\Gamma$. Let $j_\phi \colon \Csep^\Gamma \to \Csep$ denote its mapping cone functor. More explicitly, let $j_\phi(A):= C\Phi_A$ and $j_\phi(\varphi) \colon C\Phi_A \to C\Phi_B$ given by
\[ j_\phi(\varphi)(a,b_s)=(((j_\Lambda \circ \phi^*)(\varphi))(a), (j_\Gamma (\varphi))(b_s)). \]
By the universality, we obtain the functor 
\[j_\phi \colon \Kas ^\Gamma \to \Kas  \]
between the corresponding Kasparov categories.

We say that a countable discrete group $G$ has the $\gamma$-element if there is a proper $G$-C*-algebra $A_G$ in the sense of \cite[Definition 1.6]{MR1836047} (or, more generally, $A_G$ in the subcategory $\langle \mathcal{CI} \rangle$ of $\mathfrak{KK}^G $ in the sense of \cite[Definition 4.1]{MR2193334}) and 
\[ \mathsf{D}_G \in \KK^G (A_G, \bC ), \ \  \eta_G \in \KK^G (\bC, A_G ), \]
called the Dirac and dual Dirac elements respectively, such that
\[ \mathsf{D} _G \otimes \eta _G = \id _{A_G}, \ \ \Res _G^K (\eta_G \otimes _{A_G}\mathsf{D}_G)=\id _\bC, \]
for any finite subgroup $K$ of $G$. Here $\gamma _G := \eta _G \otimes _{A_G} \mathsf{D}_G \in \KK^G(\bC, \bC )$ is called the $\gamma $-element of $G$.

Let $\phi \colon \Lambda \to \Gamma$ be a homomorphism between countable discrete groups. In this section we work under the following assumptions:
\begin{itemize}
\item[\eqnum \label{cond:BC1}] The group $\Gamma$ has the $\gamma$-element. \setcounter{copy}{\value{equation}} 
\item[\eqnum \label{cond:BC2}] For any finite subgroup $K \subset \Gamma$, the subgroup $\phi^{-1}(K) \leq \Lambda$ satisfies $\gamma=1$. 
\item[\eqnum \label{cond:BC3}] The subgroup $\ker \phi$ is torsion-free. 
\end{itemize}
For example, the condition (\ref{cond:BC1}) is satisfied if $\Gamma$ is coarsely embeddable into a separable Hilbert space \cite{MR1905840} and the condition (\ref{cond:BC2}) is satisfied if $\ker \phi$ has the Haagerup property \cite{MR1821144}. 

By (\ref{cond:BC1}), we get a homomorphism
\[  j_{\Gamma }(\eta_\Gamma ) \circ \mu^\Gamma_* \colon \K_*^\Gamma (\underline{E}\Gamma ) \to \K_*(A_\Gamma \rtimes \Gamma).  \]
This is nothing but the Baum--Connes assembly map with coefficient $A_\Gamma$ and hence is an isomorphism by \cite[Corollary 3.7]{MR1966758}. 
Moreover, the composition
\[ (j_{\Lambda}\circ \phi^*)(\eta_\Gamma ) \circ \mu^\Lambda_* \colon \K _*^\Lambda (\underline{E}\Lambda ) \to \K (A_\Gamma \rtimes \Lambda )\]
is actually an isomorphism by the permanence property of the Baum--Connes isomorphism \cite[Corollary 3.4]{MR1836047}. 

By the universality of $\underline{E}\Gamma$, there is a $\Gamma$-equivariant map $f_\Gamma \colon E\Gamma \to \underline{E}\Gamma$. Under the isomorphism $\K_*(B\Gamma) \cong \K_*^\Gamma (E\Gamma)$, the map $\K_*(B\Lambda) \to \K_*(B\Gamma)$ is identified with the composition of the isomorphism $\K_*^\Lambda(E\Lambda) \cong \K_*^\Gamma (E\Lambda \times _{\Lambda} \Gamma)$ with
\[E\phi_* \colon \K_*^\Gamma (E\Lambda \times _{\Lambda} \Gamma) \to \K_*^\Gamma (E\Gamma ), \]
where $E\phi \colon E\Lambda \times _{\Lambda} \Gamma \to E\Gamma$ is the $\Gamma$-map induced from the universality of $E\Gamma$. Similarly, let $\underline{E}\phi \colon \underline{E}\Lambda \times_{\Lambda} \Gamma \to \underline{E}\Gamma$ be the $\Gamma$-equivariant map induced from the universality of $\underline{E}\Gamma$. 
\begin{lem}\label{lem:Chern}
Assume (\ref{cond:BC3}). Then, there are splittings $s_\Gamma \colon \K^\Gamma_*(\underline{E}\Gamma) \to \K^\Gamma_*(E\Gamma)$ and $s_\Lambda \colon \K^\Gamma_*(\underline{E}\Lambda ) \to \K^\Gamma_*(E\Lambda )$ such that the diagram 
\[\xymatrix{
\K_*^\Lambda(E\Lambda )_\bQ \ar[r]^{E\phi_*} \ar@<1ex>[d]^{(f_\Lambda)_*}  & \K_*^\Gamma(E\Gamma )_\bQ \ar@<1ex>[d]^{(f_\Gamma)_*} \\ 
\K_*^\Lambda(\underline{E}\Lambda)_\bQ \ar[r]^{\underline{E}\phi_*} \ar@<1ex>[u]^{s_\Lambda}  & \K_*^\Gamma(\underline{E}\Gamma )_\bQ \ar@<1ex>[u]^{s_\Gamma}
}\] 
commutes. 
\end{lem}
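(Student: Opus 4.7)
The plan is to realize $s_\Gamma$ and $s_\Lambda$ as projections onto the identity-conjugacy-class summand of the delocalized rational Chern character, and then to exploit the torsion-freeness of $\ker\phi$ in (\ref{cond:BC3}) to ensure that $\phi$ preserves this summand. Writing $\mathrm{FC}(G)$ for the set of conjugacy classes of elements of finite order in a group $G$, I would first invoke the equivariant rational Chern character of L\"uck, which provides a natural isomorphism
\[
\ch^\Gamma \colon \K_*^\Gamma(\underline{E}\Gamma)_\bQ \xrightarrow{\cong} \bigoplus_{(g) \in \mathrm{FC}(\Gamma)} M_g(\Gamma),
\]
where each summand $M_g(\Gamma)$ is functorially built from the centralizer $Z_\Gamma(g)$ and the $g = e$ piece $M_e(\Gamma)$ is canonically identified, via the classical Chern character of $B\Gamma = \Gamma \backslash E\Gamma$, with $\K_*^\Gamma(E\Gamma)_\bQ$. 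Naturality of $\ch^\Gamma$ along the $\Gamma$-equivariant map $f_\Gamma \colon E\Gamma \to \underline{E}\Gamma$ identifies $(f_\Gamma)_*$ rationally with the inclusion of this $(e)$-summand. Defining $s_\Gamma$ to be the projection onto the $(e)$-summand followed by the inverse identification therefore yields $s_\Gamma \circ (f_\Gamma)_* = \id$ by construction. I would build $s_\Lambda$ in the same way, using the induction isomorphism $\K_*^\Gamma(\underline{E}\Lambda \times_\Lambda \Gamma)_\bQ \cong \K_*^\Lambda(\underline{E}\Lambda)_\bQ$ and the analogous decomposition indexed by $\mathrm{FC}(\Lambda)$.

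Next I would verify the up square $s_\Gamma \circ \underline{E}\phi_* = E\phi_* \circ s_\Lambda$. By naturality of the equivariant Chern character, $\underline{E}\phi_*$ respects the decompositions: the summand at $(h) \in \mathrm{FC}(\Lambda)$ is sent into the summand at $(\phi(h)) \in \mathrm{FC}(\Gamma)$, and the induced map $M_h(\Lambda) \to M_{\phi(h)}(\Gamma)$ is the pullback along $\phi|_{Z_\Lambda(h)} \colon Z_\Lambda(h) \to Z_\Gamma(\phi(h))$. Here is where (\ref{cond:BC3}) enters essentially: since $\ker\phi$ is torsion-free, $\phi$ is injective on every finite subgroup of $\Lambda$, so any nonidentity torsion element $h$ of $\Lambda$ satisfies $\phi(h) \neq e$ in $\Gamma$. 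Hence every non-$(e)$ summand of $\K_*^\Lambda(\underline{E}\Lambda)_\bQ$ lands in the non-$(e)$ part of $\K_*^\Gamma(\underline{E}\Gamma)_\bQ$ and is annihilated by $s_\Gamma$; on the $(e)$-summand the induced map reduces to $B\phi_* = E\phi_*$. The down square $(f_\Gamma)_* \circ E\phi_* = \underline{E}\phi_* \circ (f_\Lambda)_*$ is automatic from the universal characterizations of $E\Gamma$ and $\underline{E}\Gamma$, which determine $f_\Gamma \circ E\phi$ and $\underline{E}\phi \circ (f_\Lambda \times_\Lambda \id)$ up to $\Gamma$-equivariant homotopy.

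The main obstacle I expect is the careful bookkeeping around L\"uck's equivariant Chern character, in particular checking that $\underline{E}\phi_*$ really does send the $(h)$-summand to the $(\phi(h))$-summand with the claimed restriction map on centralizer data. Once this functoriality is in place, (\ref{cond:BC3}) does exactly the work one needs; without it, torsion in $\ker\phi$ could pump non-identity summands on the $\Lambda$ side into the identity summand on the $\Gamma$ side, forcing $s_\Gamma \circ \underline{E}\phi_* \neq E\phi_* \circ s_\Lambda$ and making the projection-based construction untenable.
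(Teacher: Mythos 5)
Your proposal is correct and follows essentially the same route as the paper: the paper uses the Baum--Connes equivariant Chern character $\K_*^\Gamma(\underline{E}\Gamma)_\bC \cong H_*(\Gamma; F\Gamma)$, with $F\Gamma$ the conjugation module spanned by finite-order elements, and defines $s_\Gamma$, $s_\Lambda$ via the coefficient projections $F\Gamma \to \bC\cdot e$, $F\Lambda \to \bC\cdot e$ --- the same thing as your projection onto the identity-conjugacy-class summand in L\"uck's decomposition, since $H_*(\Gamma;F\Gamma)$ splits over conjugacy classes of torsion elements. Condition (\ref{cond:BC3}) is used in exactly the way you describe: torsion-freeness of $\ker\phi$ guarantees that $\phi$ sends the nonidentity torsion part $F_0\Lambda$ into $F_0\Gamma$, so the projections are compatible and the required commutativity holds.
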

\begin{proof}
For a discrete group $\Gamma$, we write $F\Gamma$ for the $\bC$-vector space generated by finite order elements of $\Gamma $ on which $\Gamma$ acts by the conjugation. It is proved in \cite[Section 15]{MR928402} (see also \cite[Section 7]{MR1292018}) that the equivariant Chern character 
\[ \ch_\Gamma \colon \K_*^\Gamma (\underline{E}\Gamma ) \to H_*^{\Gamma}(\underline{E}\Gamma , \bC) \cong H_* (\Gamma ; F\Gamma) \]
gives an isomorphism.  By the functoriality of $\ch_\Gamma$, the outer and the inner left and right squares of the diagram
\[ 
\xymatrix{
&&&\\
\K_*(B\Lambda )_\bC \ar[r]_\cong ^{\ch _\Lambda } \ar[d]^{(f_\Lambda)_*} \ar@/^2.5em/[rrr]^{E\phi_*} &H_{*}(\Lambda ; \bC) \ar[r]^{\phi_*} \ar[d]^{(f_\Lambda)_*} & H_{*}(\Gamma ; \bC ) \ar[d]^{(f_\Gamma)_*} & \K_*(B\Gamma)_\bC \ar[l]^\cong _{\ch _\Gamma }  \ar[d]^{(f_\Gamma)_*} \\
\K_*^\Lambda (\underline{E}\Lambda )_\bC \ar[r]_\cong ^{\ch _\Lambda } \ar@/_2.5em/[rrr]^{\underline{E}\phi_*} &H_{*}(\Lambda ; F\Lambda ) \ar[r]^{\phi_*}  & H_{*} (\Gamma ; F\Gamma ) & \K_*^\Gamma( \underline{E}\Gamma )_\bC \ar[l]^\cong _{\ch _\Gamma } \\ &&&
}
\]
commute. Consequently so does the inner middle square.

Now, let $F_0\Gamma $ denote the complement of $\bC \cdot e$ in $F\Gamma$. Then assumption (\ref{cond:BC3}) implies that $\phi$ maps $F_0\Lambda$ to $F_0\Gamma$. Therefore, the projections $F\Lambda \to \bC \cdot e$ and $F \Gamma \to \bC \cdot e$ induce $s_\Lambda \colon H_*(\Lambda; F\Lambda) \to H_*(\Lambda; \bC)$ and $s_\Gamma \colon H_*(\Gamma ; F\Gamma) \to H_*(\Gamma ; \bC)$, which satisfy the desired commutativity.
\end{proof}

\begin{thm}\label{thm:BC}
Let $\phi \colon \Lambda \to \Gamma $ be a homomorphism of groups. Assume (\ref{cond:BC1}), (\ref{cond:BC2}) and (\ref{cond:BC3}). Then the following hold: 
\begin{enumerate}
\item The relative assembly map $\mu_*^{\Gamma, \Lambda}$ is rationally injective.
\item If both $\Lambda $ and $\Gamma$ are torsion-free, then $\mu_*^{\Gamma, \Lambda }$ is split injective.
\item If moreover $\gamma _\Gamma =1$ holds (e.g.\ both $\Gamma $ and $\ker \phi $ have the Haagerup property), then $\mu _*^{\Gamma , \Lambda}$ is an isomorphism.
\end{enumerate}
\end{thm}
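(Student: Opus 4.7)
My plan is to prove all three parts in parallel using the Dirac--dual Dirac method applied to the mapping cone functor $j_\phi \colon \Kas^\Gamma \to \Kas$, coupled with the Five Lemma on the Puppe diagram (\ref{cond:long}). By the universal property of the equivariant Kasparov category, the dual Dirac element $\eta_\Gamma \in \KK^\Gamma(\bC, A_\Gamma)$ descends to three intertwined classes $j_\Gamma(\eta_\Gamma)$, $j_\Lambda(\phi^*\eta_\Gamma)$, and $j_\phi(\eta_\Gamma) \in \KK(C\phi, C\Phi_{A_\Gamma})$ that assemble, by functoriality of $j_\phi$, to a natural transformation of Puppe six-term sequences. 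Under hypotheses (\ref{cond:BC1}) and (\ref{cond:BC2}), the Baum--Connes assembly maps with proper coefficients $A_\Gamma$ and $\phi^*A_\Gamma$ are isomorphisms by \cite{MR1966758} and the permanence result \cite[Corollary 3.4]{MR1836047}.

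For part (3), the condition $\gamma_\Gamma = 1$ makes $\eta_\Gamma$ a $\KK^\Gamma$-equivalence with inverse $D_\Gamma$. The Haagerup assumption on $\ker\phi$ together with the same permanence argument forces $\mu^\Lambda$ to be an isomorphism as well. Both $\mu^\Gamma$ and $\mu^\Lambda$ being isomorphisms, the Five Lemma applied directly to (\ref{cond:long}) immediately yields that $\mu^{\Gamma,\Lambda}_*$ is an isomorphism.

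For part (2), torsion-freeness gives $\underline{E}\Gamma = E\Gamma$ and $\underline{E}\Lambda = E\Lambda$, and the $\gamma$-element acts as the identity on the Kasparov subcategory generated by $E\Gamma$. Consequently the composition
\[
r_\Gamma \colon \K_*(C^*\Gamma) \xrightarrow{j_\Gamma(\eta_\Gamma) \otimes} \K_*(A_\Gamma \rtimes \Gamma) \xleftarrow[\cong]{\mu^\Gamma_{A_\Gamma}} \K^\Gamma_*(E\Gamma, A_\Gamma) \xrightarrow{D_\Gamma \otimes} \K_*(B\Gamma)
\]
is a left inverse to $\mu^\Gamma$, and the analogous $r_\Lambda$ is a left inverse to $\mu^\Lambda$. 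Because both are built from the single $\Gamma$-equivariant datum $\eta_\Gamma$, they are natural with respect to $\phi_*$ and $B\phi_*$; applying the Five Lemma to the resulting morphism of Puppe sequences then produces a left inverse to $\mu^{\Gamma,\Lambda}_*$.

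For part (1) I would drop the torsion-free assumption. Rationally, the equivariant Chern character identifies $\K^\Gamma_*(\underline{E}\Gamma)_\bQ$ with $H_*(\Gamma; F\Gamma)_\bQ$, and (\ref{cond:BC3}) ensures that $\phi$ carries $F_0\Lambda$ into $F_0\Gamma$. Lemma \ref{lem:Chern} thereby provides natural splittings $s_\Gamma$, $s_\Lambda$ of the canonical maps $\K_*(B\Gamma)_\bQ \to \K^\Gamma_*(\underline{E}\Gamma)_\bQ$ and $\K_*(B\Lambda)_\bQ \to \K^\Lambda_*(\underline{E}\Lambda)_\bQ$ that are compatible with $\phi$. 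Inserting these sections into the argument of part (2) yields a rational left inverse to $\mu^{\Gamma,\Lambda}_\bQ$. The main technical obstacle will be checking that the sections and the Puppe connecting maps commute at the level of $\KK$-morphisms, so that the Five Lemma really produces a splitting of the relative assembly rather than merely splittings on the side pieces.
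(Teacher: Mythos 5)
Your overall strategy is the paper's own: descend the dual Dirac element through the mapping cone functor $j_\phi$, use the permanence theorems for proper coefficients, apply the five lemma, and use the delocalized Chern character for the rational statement. Part (3) is essentially correct, but note that the isomorphism of $\mu^\Lambda_*$ is not a consequence of a ``Haagerup assumption on $\ker\phi$'' --- Haagerup only appears in the parenthetical example, not in the hypotheses. What you actually have is (\ref{cond:BC2}) (so that, by \cite{MR1836047}, the assembly map for $\Lambda$ with coefficients in $A_\Gamma$ is an isomorphism) together with $\gamma_\Gamma=1$, which makes $\eta_\Gamma$ a $\KK^\Gamma$-equivalence and hence $\phi^*\eta_\Gamma$ a $\KK^\Lambda$-equivalence; naturality of assembly in the coefficients then transfers the $A_\Gamma$-coefficient isomorphism to trivial coefficients and gives that $\mu^\Lambda_*$ and $\mu^\Gamma_*$ are isomorphisms, after which your five lemma applied to (\ref{cond:long}) is fine. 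If instead you meant to invoke Baum--Connes for $\Lambda$ itself (maximal completion, trivial coefficients) via extension permanence and $K$-amenability of $\Lambda$, that is a substantially heavier statement which is neither among the hypotheses nor cited, and it is exactly what the paper's change-of-coefficients trick is designed to avoid.

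The genuine gap is in part (2), and it propagates into your part (1): the five lemma cannot ``produce'' a left inverse of the middle map from the left inverses $r_\Gamma$, $r_\Lambda$ of the outer maps. The arrows $r_\Gamma$, $r_\Lambda$ by themselves do not constitute a morphism of Puppe sequences (there is no middle arrow, and no formal procedure supplies one), and even if a compatible middle arrow $r$ existed, nothing forces $r\circ\mu^{\Gamma,\Lambda}_*=\id$. The correct deduction --- the paper's --- uses as vertical maps the composites $j_\Lambda(\eta_\Gamma)\circ\mu^\Lambda_*$, $j_\Gamma(\eta_\Gamma)\circ\mu^\Gamma_*$ and $j_\phi(\eta_\Gamma)\circ\mu^{\Gamma,\Lambda}_*$, the middle one being available precisely because of the natural transformation of mapping cone sequences you set up in your preamble; under torsion-freeness the outer composites are the $A_\Gamma$-coefficient assembly maps (isomorphisms by \cite{MR1966758} and \cite{MR1836047}), the five lemma gives that $j_\phi(\eta_\Gamma)\circ\mu^{\Gamma,\Lambda}_*$ is an isomorphism, and $\mu^{\Gamma,\Lambda}_*$ is then split injective simply because a map admitting a post-compositional inverse is split injective (with splitting $(j_\phi(\eta_\Gamma)\circ\mu^{\Gamma,\Lambda}_*)^{-1}\circ j_\phi(\eta_\Gamma)_*$, and $j_\phi(\mathsf{D}_\Gamma)$ inverting $j_\phi(\eta_\Gamma)$ when $\gamma_\Gamma=1$). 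For (1), do not aim at a rational splitting at all: the statement only claims rational injectivity, and the sections $s_\Gamma$, $s_\Lambda$ of Lemma \ref{lem:Chern} cannot be expected to intertwine the boundary maps, which is the obstacle you yourself flag. Instead extend scalars to $\bC$ and run the injectivity half of the five lemma by hand on the ladder of composites, using $s_\Lambda$, $s_\Gamma$ only to lift classes from $\K^\Lambda_*(\underline{E}\Lambda)_\bC$ and $\K^\Gamma_*(\underline{E}\Gamma)_\bC$ back to $\K_*(B\Lambda)_\bC$ and $\K_*(B\Gamma)_\bC$ during the chase; the commutative square of Lemma \ref{lem:Chern} is exactly the compatibility this requires.
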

\begin{proof}
For (2) and (3), apply the five lemma for the following commutative diagram of exact sequences
\[ 
\xymatrix@C=1em{
\K_*(B\Lambda) \ar[r]^{i_*} \ar[d]^{j_\Lambda(\eta_\Gamma) \circ \mu^\Lambda_* } & \K_*(B\Lambda) \ar[r]^{j_*} \ar[d]^{j_\Gamma(\eta_\Gamma) \circ \mu^\Gamma_* }  & \K_*(B\Gamma, B\Lambda) \ar[r]^{\partial } \ar[d]^{j_\phi(\eta_\Gamma) \circ \mu^{\Gamma, \Lambda}_* }  & \K_{*-1}(B\Lambda ) \ar[r]^{i_*} \ar[d]^{j_\Lambda(\eta_\Gamma) \circ \mu^\Lambda_* } & \K_{*-1}(B\Gamma ) \ar[d]^{j_\Gamma(\eta_\Gamma) \circ \mu^\Gamma_* } \\ 
\K_*(A_\Gamma \rtimes \Lambda ) \ar[r]^{\Phi_{A_\Gamma}} & \K_*(A_\Gamma \rtimes \Gamma ) \ar[r] & \K_*(C\Phi_{A_\Gamma} ) \ar[r] & \K_{*-1}(A_\Gamma \rtimes \Lambda ) \ar[r]^{\Phi_{A_\Gamma}} &\K_*(A_\Gamma \rtimes \Gamma )
}
\]
to see that $j_\phi(\eta_\Gamma) \circ \mu^{\Gamma, \Lambda}_* $ is an isomorphism. Since $j_\phi(\eta_\Gamma)$ has a right inverse $j_\phi(\mathsf{D}_\Gamma )$, we obtain that $\mu^{\Gamma, \Lambda}_*=j_\phi(\mathsf{D}_\Gamma ) \circ j_\phi(\eta_\Gamma) \circ \mu^{\Gamma, \Lambda}_* $ is split injective and isomorphic if $\gamma_\Gamma =1 $.

For (1), we may replace the coefficient field $\bQ$ with $\bC$.  By a diagram chasing similar to the proof of the injectivity part of the five lemma using the sections $s_\Lambda$ and $s_\Gamma$ in Lemma \ref{lem:Chern}, it is checked that the middle map is injective.  
\end{proof}

\begin{thm}\label{thm:nondeg}
Let $\Gamma$ and $\Lambda$ be discrete groups with (\ref{cond:BC1}), (\ref{cond:BC2}) and (\ref{cond:BC3}). Moreover, assume that the pair $(B\Gamma, B\Lambda)$ has the homotopy type of a finite CW-complex. Then the map 
\[ \beta _{\Gamma , \Lambda , \bQ} \colon \K^*(C^*(\Gamma , \Lambda))_\bQ \to \K^*(B\Gamma ,B\Lambda )_\bQ \]
is surjective.
\end{thm}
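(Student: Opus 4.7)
The plan is to deduce rational surjectivity of $\beta_{\Gamma,\Lambda}$ from the rational injectivity of $\alpha_{\Gamma,\Lambda}$ established in Theorem~\ref{thm:BC}(1), by dualizing through the adjunction $\langle \alpha_{\Gamma,\Lambda}(x),\xi\rangle = \langle x,\beta_{\Gamma,\Lambda}(\xi)\rangle$ of the Proposition preceding this section, together with the non-degeneracy of the K-theoretic Kronecker pairing on the finite CW pair $(B\Gamma,B\Lambda)$.

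First, the finite CW hypothesis ensures that $\K^*(B\Gamma,B\Lambda)_\bQ$ and $\K_*(B\Gamma,B\Lambda)_\bQ$ are finite-dimensional $\bQ$-vector spaces paired non-degenerately via the Chern character and classical Kronecker duality. Consequently, rational surjectivity of $\beta_{\Gamma,\Lambda,\bQ}$ is equivalent to the assertion that for every $0\neq x\in\K_*(B\Gamma,B\Lambda)_\bQ$ there exists $\xi\in\K^*(C^*(\Gamma,\Lambda))_\bQ$ with $\langle\alpha_{\Gamma,\Lambda}(x),\xi\rangle\neq 0$.

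Fix such an $x$. The diagram chase in the proof of Theorem~\ref{thm:BC}(1) actually establishes more than injectivity of $\alpha_{\Gamma,\Lambda,\bQ}$: it shows the descended composition $\alpha_{\Gamma,\Lambda}(\cdot)\otimes j_\phi(\eta_\Gamma)$ is rationally injective, so $y := \alpha_{\Gamma,\Lambda}(x)\otimes j_\phi(\eta_\Gamma)$ is non-zero in $\K_*(j_\phi(A_\Gamma))_\bQ$. By associativity of the Kasparov product, any $\xi'\in\K^*(j_\phi(A_\Gamma))_\bQ$ yields $\xi := j_\phi(\eta_\Gamma)\otimes\xi' \in\K^*(C^*(\Gamma,\Lambda))_\bQ$ satisfying $\langle\alpha_{\Gamma,\Lambda}(x),\xi\rangle = \langle y,\xi'\rangle$, so it suffices to find $\xi'$ pairing non-trivially with $y$.

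To produce such a $\xi'$, I would invoke Baum--Connes with proper coefficient $A_\Gamma$: the assembly maps $\K_*^{G}(\underline{E}G;A_\Gamma)\to\K_*(A_\Gamma\rtimes G)$ for $G\in\{\Gamma,\Lambda\}$ are isomorphisms (as already used in the proof of Theorem~\ref{thm:BC}), and a five-lemma argument on Puppe sequences extends this to an isomorphism at the mapping-cone level computing $\K_*(j_\phi(A_\Gamma))$. Via the equivariant Chern character and the splittings of Lemma~\ref{lem:Chern}, both $\K_*(j_\phi(A_\Gamma))_\bC$ and $\K^*(j_\phi(A_\Gamma))_\bC$ split off a ``trivial conjugacy class'' summand canonically isomorphic to $H_*(B\Gamma,B\Lambda;\bC)$ and $H^*(B\Gamma,B\Lambda;\bC)$ respectively; assumption~(\ref{cond:BC3}) that $\ker\phi$ is torsion-free is what guarantees that these summands on the $\Gamma$- and $\Lambda$-sides glue compatibly across $\phi$. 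The class $y$ lies in this summand by the same chase, and the non-degeneracy of the Kronecker pairing on $(B\Gamma,B\Lambda)$ then supplies the required $\xi'$.

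The main obstacle will be executing this last step rigorously: one must construct a Chern-character splitting on the K-cohomology of $j_\phi(A_\Gamma)$ (not just on the K-homology, as in Lemma~\ref{lem:Chern}) and verify that a cohomology class on $(B\Gamma,B\Lambda)$ detecting $y$ genuinely lifts to a $\KK$-class in $\K^*(j_\phi(A_\Gamma))$. In spirit this is the ``dual'' of the K-homological chase of Theorem~\ref{thm:BC}(1), obtained by dualizing Lemma~\ref{lem:Chern} and transporting classes through the Dirac element $\mathsf{D}_\Gamma$ rather than $\eta_\Gamma$; assumption~(\ref{cond:BC3}) continues to play the role of isolating the ``trivial conjugacy'' summand so the dualized splitting is well defined.
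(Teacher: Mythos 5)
Your first two steps are sound and are essentially the paper's own reduction: since $(B\Gamma,B\Lambda)$ is a finite CW pair, $\K^*(B\Gamma,B\Lambda)_\bQ \cong \Hom(\K_*(B\Gamma,B\Lambda),\bQ)$, so surjectivity of $\beta_{\Gamma,\Lambda,\bQ}$ reduces (via the adjunction $\langle\alpha_{\Gamma,\Lambda}(x),\xi\rangle=\langle x,\beta_{\Gamma,\Lambda}(\xi)\rangle$) to detecting each nonzero $x\in\K_*(B\Gamma,B\Lambda)_\bQ$ by some $\xi\in\K^*(C^*(\Gamma,\Lambda))_\bQ$; and the proof of Theorem \ref{thm:BC}(1) does show that $y:=\alpha_{\Gamma,\Lambda}(x)\hotimes j_\phi(\eta_\Gamma)$ is nonzero in $\K_*(C\Phi_{A_\Gamma})_\bQ$, so it suffices to find $\xi'\in\K^*(C\Phi_{A_\Gamma})$ pairing nontrivially with $y$ and to set $\xi=j_\phi(\eta_\Gamma)\hotimes\xi'$.

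The gap is exactly the step you flag as ``the main obstacle'': you never actually produce $\xi'$. Your proposed route --- an equivariant-Chern-character splitting on the K-\emph{cohomology} of $j_\phi(A_\Gamma)$, isolation of a trivial-conjugacy-class summand, and a lift of an ordinary cohomology class of $(B\Gamma,B\Lambda)$ to a $\KK$-class --- is not established anywhere (Lemma \ref{lem:Chern} only gives splittings on the homological, $\underline{E}$-side), and even if the summand decomposition of $\K_*(C\Phi_{A_\Gamma})_\bC$ existed, knowing that the index pairing is nondegenerate on that summand is precisely the kind of statement you are trying to prove, so the detour is circular rather than a simplification. The missing ingredient is much more elementary: since $A_\Gamma$ is a proper $\Gamma$-C*-algebra, $A_\Gamma\rtimes\Gamma$ and $A_\Gamma\rtimes\Lambda$ satisfy the universal coefficient theorem, hence so does the mapping cone $C\Phi_{A_\Gamma}=A_\Gamma\rtimes(\Gamma,\Lambda)$; rationally the UCT map $\K^*(C\Phi_{A_\Gamma})_\bQ\to\Hom(\K_*(C\Phi_{A_\Gamma}),\bQ)$ is then an isomorphism, so every nonzero rational class $y$ is detected by some $\xi'$. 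This is how the paper closes the argument (it phrases the whole proof as a commutative diagram whose outer vertical arrows are the UCT maps for $A_\Gamma\rtimes(\Gamma,\Lambda)$ and for $C_0(B\Gamma\setminus B\Lambda)$, with the bottom row surjective by the rational injectivity from Theorem \ref{thm:BC}); note also that no UCT assumption on $C^*(\Gamma,\Lambda)$ itself is needed, since the middle vertical arrow is never inverted. If you replace your Chern-character construction by this UCT argument, your proof is complete and coincides with the paper's.
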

\begin{proof}
Consider the diagram
\[ \xymatrix{
\K^*(A_\Gamma \rtimes (\Gamma ,\Lambda))_\bQ \ar[r]^{j_\phi (\eta_\Gamma ) } \ar[d] & \K^*(C^*(\Gamma ,\Lambda ) )_\bQ \ar[r]^{\beta_{\Gamma, \Lambda}} \ar[d] & \K^*(B\Gamma , B\Lambda )_\bQ \ar[d] \\
\Hom(\K_*(A_\Gamma \rtimes (\Gamma , \Lambda)),\bQ) \ar[r]^{(j_\phi (\eta_\Gamma ) )^*} & \Hom (\K_*(C^*(\Gamma, \Lambda)), \bQ) \ar[r]^{(\alpha_{\Gamma, \Lambda }) ^*} &  \Hom(\K_*(B\Gamma, B\Lambda), \bQ), 
}\]
where the vertical arrows are the UCT maps. This diagram commutes by the associativity of the Kasparov product and (the proof of) Theorem \ref{thm:BC} implies that the second row is surjective. Moreover, the first and the third vertical morphisms are isomorphic by the universal coefficient theorem for $C_0(B\Gamma \setminus B\Lambda)$ and $A_\Gamma \rtimes (\Gamma, \Lambda)$. This completes the proof of the surjectivity of the first row, and hecne $\beta _{\Gamma, \Lambda}$.
\if0
By the universal coefficient theorem, we obtain an isomorphism
\[ \K^*(B\Gamma, B\Lambda)_\bQ  \cong \Hom (\K_*(B\Gamma , B\Lambda ) , \bQ ).\]
Hence $\beta _{\Gamma , \Lambda , \bQ}$ is surjective if and only if $\Im (\beta _{\Gamma , \Lambda , \bQ})^{\perp } \subset \K_*(B\Gamma, B\Lambda)$ is zero. That is, it suffices to show that for any $x \in \K_*(B\Gamma, B\Lambda)_\bQ$ there is $y \in \K^*(C^*(\Gamma, \Lambda))$ such that 
\[ \alpha_{\Gamma , \Lambda }(x) \hotimes_{C^*(\Gamma ,\Lambda )}  y =x \hotimes _{C_0(B\Gamma^\circ)} \beta_{X,Y}(y) \neq 0 \in \KK(\bC,\bC)_\bQ \cong \bQ.\] 

Let $x \in \K_*(B\Gamma, B\Lambda)_\bQ$. As is shown in the proof of Theorem \ref{thm:BC}, the Kasparov product $\alpha_{\Gamma, \Lambda}(x) \hotimes_{C^*(\Gamma, \Lambda)} j_\phi(\eta_\Gamma)$ is non-zero. 
By assumption (\ref{cond:BC1}) and (\ref{cond:BC2}), both $A_\Gamma \rtimes \Gamma$ and $A_\Gamma \rtimes \Lambda$ are in the UCT class and hence so is $C\Phi_A $. That is, there is $z \in \K^*(C\Phi_A)$ such that 
\[  \alpha_{\Gamma, \Lambda} (x) \hotimes _{C^*(\Gamma , \Lambda )}  j_\phi(\eta_\Gamma ) \hotimes _{C\Phi_{A_\Gamma}} z \neq 0.\]
Now $y := j_{\phi}(\eta_\Gamma) \hotimes _{C\Phi_{A_\Gamma}} z \in \K^*(C^*(\Gamma, \Lambda))$ is the desired element. 
\fi
\end{proof}
\begin{rmk}\label{rmk:nondeg}
We remark that in the proof of Theorem \ref{thm:nondeg} actually shows that, more strongly, the restriction of $\beta_{\Gamma , \Lambda, \bQ}$ on the subgroup $\Im (j_\phi(\eta_\Gamma ) \hotimes {\cdot }) = \Im (j_\phi(\gamma_\Gamma ) \hotimes {\cdot })$ is surjective. 
\end{rmk}
The surjectivity as in Theorem \ref{thm:nondeg} holds even if $(B\Gamma, B\Lambda)$ does not have homotopy type of finite CW-complexes.
\begin{rmk}\label{rmk:nonfinite}
Let us consider the case that $(B\Gamma, B\Lambda)$ does not have the homotopy type of a pair of finite CW-complexes. In this case, the K-group $\K ^*(B\Gamma , B\Lambda )$ is defined as the $\K$-group of the $\sigma$-C*-algebra of continuous function on $B\Gamma$ whose restriction to $B\Lambda$ is zero. The Kasparov product also works in the category of $\sigma$-C*-algebras (for the $\K$-theory and $\KK$-theory of $\sigma$-C*-algebras, see \cite{MR1050490,Bonkat,AranoKubota}), and hence we can define the universal envelope $\beta_{\Gamma, \Lambda }  \colon \K^*(C^*(\Gamma, \Lambda)) \to  \K^*(B\Gamma, B\Lambda)$. 

We have the Milnor $\limone$-sequence \cite[Theorem 3.2]{MR1219733} as
\[ 0 \to {\limone _{(X,Y)}}  \K^{*+1}(X,Y)_\bQ \to \K^* (B\Gamma, B\Lambda)_\bQ \to \varprojlim _{(X,Y)} \K^*(X,Y)_\bQ \to 0, \]
where $(X,Y)$ runs over all finite subcomplexes of $(B\Gamma, B\Lambda)$. Since the groups $\K^*(X,Y)_\bQ$ are finite rank $\bQ$-vector spaces, the projective system $\{ \K^*(X,Y)_\bQ \}_{(X,Y)}$ automatically satisfies the the Mittag-Leffler condition, and hence its $\lim ^1$-term vanishes. That is, we have an isomorphism 
\begin{align*} \K^*(B\Gamma , B\Lambda)_\bQ \cong &  \varprojlim _{(X,Y)} \K^*(X,Y)_\bQ \\  \cong & \Hom (\varinjlim_{(X,Y)} \K_*(X,Y) , \bQ) \\ \cong & \Hom (\K^*(B\Gamma, B\Lambda) , \bQ). \end{align*}
Then the same proof as Theorem \ref{thm:nondeg} works. 
\end{rmk}

For the rest of this section, we study the case that $(\Gamma, \Lambda)$ satisfies (\ref{cond:BC1}), (\ref{cond:BC3}) in page \pageref{cond:BC1} and   
\begin{itemize}
\item[{(\copynum[\label{cond:BC2'}])}] The subgroup $\ker \phi$ of $\Lambda$ is amenable.
\end{itemize}
In this case, $\phi$ induces $\ast$-homomorphisms between reduced group C*-algebras $\phi_r \colon C^*_r\Lambda \to C^*_r\Gamma$ and reduced crossed products $\Phi_{A,r} \colon A \rtimes _r \Lambda \to A \rtimes _r \Gamma$. Let $\epsilon ^{\Gamma, \Lambda } \colon C\phi \to C\phi_r$ denote the quotient. In the same way as the above argument using the universality of $\Kas ^\Gamma$, we get the reduced descent functors $j_{\Gamma, r} $, $j_{\Lambda , r}$ and $j_{\phi , r}$.

\begin{lem}\label{lem:dBCsurj}
Let $\Gamma$ and $\Lambda$ be discrete groups and let $\phi \colon \Lambda \to \Gamma$ be a homomorphism. Assume (\ref{cond:BC1}) and (\ref{cond:BC2'}). Then, the image of the Kasparov product $j_{\phi}(\gamma _\Gamma) $ on the K-homology group $\K^*(C^*(\Gamma, \Lambda))$ is included to the image of $(\epsilon^{\Gamma, \Lambda})^*$.
\end{lem}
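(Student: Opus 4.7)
The strategy is to show that, under the hypotheses, the quotient map $\epsilon^{\Gamma,\Lambda}_{A_\Gamma} \colon j_\phi(A_\Gamma) = C\Phi_{A_\Gamma} \to C\Phi_{A_\Gamma,r} = j_{\phi,r}(A_\Gamma)$ is a $\KK$-equivalence, and then to exploit naturality of the descent to factor $j_\phi(\eta_\Gamma)$ through $\epsilon^{\Gamma,\Lambda}$. Once this is done, the desired factorization of $j_\phi(\gamma_\Gamma) \hotimes x$ through $[\epsilon^{\Gamma,\Lambda}]$ follows from the very definition $\gamma_\Gamma = \eta_\Gamma \hotimes_{A_\Gamma} \mathsf{D}_\Gamma$.

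For the first step, observe that since $A_\Gamma$ is a proper $\Gamma$-C*-algebra (or lies in $\langle \mathcal{CI} \rangle$), the quotient $A_\Gamma \rtimes \Gamma \to A_\Gamma \rtimes_r \Gamma$ is a $\KK$-equivalence: this is standard for proper actions and then extends to $\langle\mathcal{CI}\rangle$ since the class of objects for which it holds is closed under exact triangles. For the action of $\Lambda$ on $A_\Gamma$ obtained by pullback along $\phi$, the action is amenable in the sense of Anantharaman-Delaroche: the image $\phi(\Lambda) \leq \Gamma$ acts (amenably) via restriction of a proper action, and pulling back along the quotient $\Lambda \to \Lambda/\ker\phi = \phi(\Lambda)$ with amenable kernel preserves amenability of the action. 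Hence $A_\Gamma \rtimes \Lambda \to A_\Gamma \rtimes_r \Lambda$ is also a $\KK$-equivalence (in fact an isomorphism). Passing to the mapping cone of the two horizontal arrows in the commuting square
\[
\xymatrix{
A_\Gamma \rtimes \Lambda \ar[r]^{\Phi_{A_\Gamma}} \ar[d] & A_\Gamma \rtimes \Gamma \ar[d] \\
A_\Gamma \rtimes_r \Lambda \ar[r]^{\Phi_{A_\Gamma,r}} & A_\Gamma \rtimes_r \Gamma
}
\]
and applying the five lemma for $\KK$, we conclude that $\epsilon^{\Gamma,\Lambda}_{A_\Gamma}$ is a $\KK$-equivalence.

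For the second step, the quotient $\ast$-homomorphisms $A\rtimes \Lambda \to A\rtimes_r \Lambda$ and $A\rtimes \Gamma \to A\rtimes_r \Gamma$ are natural in $A \in \Csep^\Gamma$, so they induce a natural transformation $\epsilon \colon j_\phi \Rightarrow j_{\phi,r}$ between the two mapping cone functors $\Kas^\Gamma \to \Kas$. Applied to the class $\eta_\Gamma \in \KK^\Gamma(\bC, A_\Gamma)$, this yields the commutative square in $\Kas$
\[
[\epsilon^{\Gamma,\Lambda}] \hotimes_{C\phi_r} j_{\phi,r}(\eta_\Gamma) = j_\phi(\eta_\Gamma) \hotimes_{C\Phi_{A_\Gamma}} [\epsilon^{\Gamma,\Lambda}_{A_\Gamma}].
\]
Inverting $[\epsilon^{\Gamma,\Lambda}_{A_\Gamma}]$, we obtain
\[
j_\phi(\eta_\Gamma) = [\epsilon^{\Gamma,\Lambda}] \hotimes_{C\phi_r} j_{\phi,r}(\eta_\Gamma) \hotimes_{C\Phi_{A_\Gamma,r}} [\epsilon^{\Gamma,\Lambda}_{A_\Gamma}]^{-1}.
\]
Consequently, for any $x \in \K^*(C^*(\Gamma, \Lambda)) = \KK(C\phi, \bC)$,
\[
j_\phi(\gamma_\Gamma) \hotimes x = j_\phi(\eta_\Gamma) \hotimes j_\phi(\mathsf{D}_\Gamma) \hotimes x = [\epsilon^{\Gamma,\Lambda}] \hotimes \big(j_{\phi,r}(\eta_\Gamma) \hotimes [\epsilon^{\Gamma,\Lambda}_{A_\Gamma}]^{-1} \hotimes j_\phi(\mathsf{D}_\Gamma) \hotimes x\big),
\]
which exhibits $j_\phi(\gamma_\Gamma)\hotimes x$ as the image under $(\epsilon^{\Gamma,\Lambda})^*$ of the bracketed element of $\K^*(C\phi_r)$.

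The main obstacle is the input that the pullback $\Lambda$-action on $A_\Gamma$ is amenable, i.e.\ that assumption (\ref{cond:BC2'}) suffices to force $A_\Gamma \rtimes \Lambda = A_\Gamma \rtimes_r \Lambda$; everything else is formal manipulation of the descent functor and its naturality under the full-to-reduced quotient.
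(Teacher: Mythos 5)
Your proof is correct and follows essentially the same route as the paper: both arguments identify $C\Phi_{A_\Gamma}$ with $C\Phi_{A_\Gamma,r}$ (the paper notes they are even isomorphic, since properness of the $\Gamma$-action plus amenability of $\ker\phi$ makes full and reduced crossed products agree on both levels, while you settle for a $\KK$-equivalence via the five lemma), and then use naturality of the full-to-reduced quotient applied to $\eta_\Gamma$ together with $\Im(j_\phi(\gamma_\Gamma)\hotimes{\cdot})=\Im(j_\phi(\eta_\Gamma)\hotimes{\cdot})$ to factor through $(\epsilon^{\Gamma,\Lambda})^*$. The point you flag as the main obstacle (amenability of the pulled-back $\Lambda$-action on $A_\Gamma$) is exactly what the paper uses implicitly, so nothing is missing.
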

\begin{proof}
Since $\Gamma$ acts on $A_\Gamma$ properly, $C\Phi_{A_\Gamma}$ and $C\Phi_{A_\Gamma ,r} $ are isomorphic. (We remark that they are $\KK$-equivalent even if we only assume that $A_\Gamma \in \langle \mathcal{CI} \rangle $). The lemma follows from the commutativity of the diagram
\[\xymatrix@R=1em{
&\K^0(C^*_r(\Gamma, \Lambda ) ) \ar[dd]^{(\epsilon^{\Gamma, \Lambda})^*} \\
\K^0(C\Phi_{A_\Gamma }) \ar[ru]^{j_{\phi ,r }(\eta_\Gamma ) \hotimes {\cdot } } \ar[rd]^{j_{\phi}(\eta_\Gamma ) \hotimes {\cdot }}& \\
&\K^0(C^*(\Gamma, \Lambda))
}  \]
since $\Im (j_\phi (\eta_\Gamma ) \hotimes {\cdot }) = \Im (j_\phi (\gamma _\Gamma) \hotimes {\cdot})$.
\end{proof}

\begin{prp}
Suppose that $(\Gamma, \Lambda)$ satisfies (\ref{cond:BC1}), (\ref{cond:BC2'}) and (\ref{cond:BC3}). Then the composition $\beta_{\Gamma, \Lambda} \circ \epsilon _{\Gamma, \Lambda}$ is rationally surjective.
\end{prp}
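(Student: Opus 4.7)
The approach is to combine Remark \ref{rmk:nondeg} with Lemma \ref{lem:dBCsurj}. The main task is to observe that the hypotheses of this proposition are strong enough to invoke Theorem \ref{thm:nondeg}, and then to translate the range of that surjectivity statement into the image of $(\epsilon^{\Gamma,\Lambda})^*$.

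First I would verify that (\ref{cond:BC2'}) implies (\ref{cond:BC2}). For any finite subgroup $K \leq \Gamma$, the preimage $\phi^{-1}(K)$ fits in the extension
\[ 1 \to \ker \phi \to \phi^{-1}(K) \to \phi^{-1}(K)/\ker \phi \to 1, \]
whose quotient embeds in $K$ and is therefore finite. Since $\ker \phi$ is amenable, so is the extension $\phi^{-1}(K)$, and amenable groups satisfy $\gamma = 1$ by Higson--Kasparov. Thus the hypotheses of Theorem \ref{thm:BC} and of Theorem \ref{thm:nondeg} are met; if $(B\Gamma, B\Lambda)$ fails to have the homotopy type of a finite CW pair, I would use Remark \ref{rmk:nonfinite} to extend the conclusion.

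By Remark \ref{rmk:nondeg}, the restriction of $\beta_{\Gamma, \Lambda, \bQ}$ to the subgroup $\Im (j_\phi(\gamma_\Gamma) \hotimes {\cdot}) \subset \K^*(C^*(\Gamma, \Lambda))_\bQ$ is already surjective onto $\K^*(B\Gamma, B\Lambda)_\bQ$. By Lemma \ref{lem:dBCsurj}, this same subgroup is contained in the image of $(\epsilon^{\Gamma, \Lambda})^* \colon \K^*(C^*_r(\Gamma, \Lambda)) \to \K^*(C^*(\Gamma, \Lambda))$. Composing these two inclusions yields the rational surjectivity of $\beta_{\Gamma, \Lambda} \circ (\epsilon^{\Gamma, \Lambda})^*$.

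The potentially delicate step --- controlling the image of the Dirac--dual Dirac construction inside the full relative C*-algebra's K-homology in terms of the reduced one --- has already been absorbed into Lemma \ref{lem:dBCsurj}, where it is handled by the properness of the $\Gamma$-action on $A_\Gamma$ (which identifies $C\Phi_{A_\Gamma}$ with $C\Phi_{A_\Gamma, r}$). Consequently there is essentially no additional work beyond assembling the two cited statements; the present proposition is a direct synthesis of Remark \ref{rmk:nondeg} and Lemma \ref{lem:dBCsurj}.
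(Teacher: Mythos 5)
Your proposal is correct and follows essentially the same route as the paper, whose proof is exactly the combination of Remark \ref{rmk:nondeg} and Lemma \ref{lem:dBCsurj}. Your extra verification that (\ref{cond:BC2'}) implies (\ref{cond:BC2}) (amenable-by-finite is amenable, hence $\gamma=1$ by Higson--Kasparov) is a worthwhile detail the paper leaves implicit, but it does not change the argument.
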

\begin{proof}
It follows from Remark \ref{rmk:nondeg} and Lemma \ref{lem:dBCsurj}.
\end{proof}

\appendix
\section{A note on Real Kasparov theory}
In this section, we summarize some calculations of Real KK-theory (in particular the $\KKR_{-1}$-theory) used in this paper. 

\begin{lem}
For a pair $A$, $B$ of $\sigma$-unital Real C*-algebras, the group $\KKR_{-1}(A,B)$ is the set of homotopy classes (in the sense of \cite{MR1656031}) of real self-adjoint Kasparov $A$-$B$ bimodules. 
\end{lem}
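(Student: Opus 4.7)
The plan is to exhibit an explicit bijection between homotopy classes of real self-adjoint Kasparov $A$-$B$ bimodules and $\KKR_{-1}(A,B)$, using Kasparov's graded presentation $\KKR_{-1}(A,B) = \KKR_{1,0}(A,B)$. Here cycles are quadruples $(E,\phi,F,e)$ with $E$ a $\bZ_2$-graded Real Hilbert $B$-module (grading operator $\gamma$), $\phi \colon A \to \bB(E)$ an even real $\ast$-representation, $e \in \bB(E)$ a real odd element satisfying $e^* = -e$ and $e^2 = -1$ commuting with $\phi$, and $F$ odd, real, self-adjoint, anticommuting with $e$ and subject to the usual compactness conditions $\phi(a)(F^2-1),[\phi(a),F] \in \bK(E)$.

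The forward map will send an ungraded real self-adjoint bimodule $(E',\phi',F_0)$ to its double:
\[
E := E' \oplus E', \quad \gamma := \begin{pmatrix} 1 & 0 \\ 0 & -1 \end{pmatrix}, \quad \phi := \phi' \oplus \phi', \quad e := \begin{pmatrix} 0 & -1 \\ 1 & 0 \end{pmatrix}, \quad F := \begin{pmatrix} 0 & F_0 \\ F_0 & 0 \end{pmatrix},
\]
with all Real, grading, Clifford, and compactness conditions inherited directly from $(E',\phi',F_0)$. The inverse map will be restriction to the $+1$-eigenspace of the real self-adjoint involution $u := e\gamma$: one checks $u^* = \gamma^*e^* = -\gamma e = e\gamma = u$ using $e\gamma + \gamma e = 0$, $u^2 = e\gamma e\gamma = -e^2\gamma^2 = 1$ using $e^2 = -1$, and that $u$ commutes with both $\phi$ and $F$ since $e$ and $\gamma$ each anticommute with $F$ and each commute with $\phi$. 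Setting $E' := \ker(u-1)$ yields a Real Hilbert $B$-submodule invariant under $\phi$ and $F$, and the restriction $(E',\phi|_{E'},F|_{E'})$ is a real self-adjoint Kasparov bimodule; the orthogonal decomposition $E = E' \oplus eE'$ together with the induced block form of $\phi$, $F$, $\gamma$, and $e$ shows that the two constructions are inverse up to canonical unitary equivalence.

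To conclude, I would verify that both maps descend to equivalence classes: a homotopy of real self-adjoint bimodules is precisely a real self-adjoint cycle for the pair $(A, IB)$ with $IB := C([0,1],B)$, and both doubling and restriction commute with this base change; similarly, degenerate cycles and direct sums are transported in both directions. The main obstacle is keeping track of the Real (antilinear) involution throughout. In particular, a general Clifford generator $e$ appearing in a $\KKR_{-1}$-cycle must be replaced by a Real one within the same homotopy class, which can be arranged by averaging over the Real involution and a polar-type rescaling without affecting the Kasparov class, following the standard procedure in \cite{MR1656031} adapted to the Real setting. Once this replacement is made, the remaining checks are routine block-matrix manipulations that preserve $\sigma$-unitality and essential equivalence.
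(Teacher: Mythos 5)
Your overall route is the same as the paper's: both present $\KKR_{-1}(A,B)$ through cycles carrying an action of $\Cl_{0,1}$ and then strip off the grading and the Clifford generator. Your restriction to the $+1$-eigenmodule of $u=e\gamma$ is a repackaging of the paper's identification $E^0\cong E^1$ via $\varphi(f)|_{E^0}$, and your doubling map is its inverse. The mutual inverseness you assert does hold, although the ``canonical'' unitary is not the naive $(\xi,\eta)\mapsto\xi+e\eta$ (which carries the grading to an off-diagonal symmetry rather than to $\diag(1,-1)$); one needs instead $(\xi,\eta)\mapsto\tfrac{1}{\sqrt2}(\xi+\gamma\xi)+\tfrac{1}{\sqrt2}(e\eta+\eta)$, a routine verification you left implicit.

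The genuine issue is where you locate the normalization. In a Real Kasparov cycle for $\KKR(A\hotimes\Cl_{0,1},B)$ the representation is already a Real $\ast$-homomorphism, so $e=\varphi(f)$ is automatically real, odd, skew-adjoint and satisfies $e^2=-1$ exactly: your stated ``main obstacle'' of replacing $e$ by a Real generator is vacuous. What does require an argument --- and what your definition of $\KKR_{1,0}$-cycles quietly builds in --- is that $F$ may be assumed to anticommute with $e$ exactly. In a general cycle $F$ only graded-commutes with $\varphi(f)$ modulo compacts, so $u=e\gamma$ commutes with $F$ only modulo compacts, $\ker(u-1)$ need not be $F$-invariant, and your restriction map is not defined. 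This is precisely the step the paper supplies, via the compact perturbation $F\mapsto\tfrac12\bigl(F+\varphi(f)F\varphi(f)\bigr)$, which must also be applied to homotopies so that both surjectivity and injectivity of the comparison map follow. With that correction inserted (and your normalization paragraph deleted or redirected to this point), your argument goes through and is essentially the paper's proof.
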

Here we say that a triple $(E,\pi, T)$ is a real self-adjoint Kasparov bimodule if $E$ is a countably generated Real Hilbert $B$-module, $\pi \colon A \to \bB(E)$ is a Real $\ast$-representation and $T \in \bB(E)$ is a real self-adjoint operator with $[\varphi (A), T] \in \bK(E)$ and $\varphi(A)(T^2-1) \in \bK(E)$. 
\begin{proof}
Let $\Cl_{0,1}$ be the Clifford algebra generated by a single element $f$ with $f=-f^*$, $\overline{f}=f$ and $f^2=-1$. 
Then the group $\KKR_{-1}(A,B)$ is defined as $\KKR(A \hotimes \Cl_{0,1}, B)$. A Real Kasparov $(A \hotimes \Cl _{0,1})$-$B$ bimodule is a triplet $(E,\pi, F)$, where $E$ is a countably generated $\bZ_2$-graded Hilbert $B$-module, $\varphi \colon A \hotimes \Cl_{0,1} \to \bB(E)$ is a real $\bZ_2$-graded $\ast$-homomorphism and $F$ is a real odd self-adjoint operator with $[\varphi(A \hotimes \Cl_{0,1}), F] \subset \bK(E)$ and $\varphi (A \hotimes \Cl_{0,1})(F^2-1) \subset \bK(E)$. 
Replacing $F$ with its compact perturbation $(F+\varphi(f)F\varphi(f))/2$, we may assume that $F$ anticommutes with $\varphi(f)$. 
Now, the restriction $\varphi(f)|_{E^0}$ gives an isomorphism $E^0 \cong E^1$. Under this identification we have
\[ \varphi(f)= \begin{pmatrix}0 & -1 \\ 1 & 0 \end{pmatrix}_{\textstyle ,} \ \ 
\varphi|_A =\begin{pmatrix} \pi & 0 \\ 0 & \pi \end{pmatrix} \text{ and }
 F= \begin{pmatrix} 0 & T \\ T & 0 \end{pmatrix}_{\textstyle .}
\]
Consequently we get a triple $(E^0,\pi,T)$. This correspondence obviously gives rise to an isomorphism between the set of homotopy classes of real self-adjoint Kasparov $A$-$B$ bimodules and $\KKR_{-1}(A,B)$.
\end{proof}

Next, we give an explicit calculation of the Kasparov product, an odd analogue of \cite[Theorem 18.10.1]{MR1656031}. To this end, we first recall and generalize a useful formula of the Kasparov product. For a $\bZ_2$-graded Hilbert $A$-module $E$, we write $\hat{E}$ for $E \oplus E^{\mathrm{op}}$ and $E^\circ$ for $E$ with the trivial $\bZ_2$-grading.
\begin{lem}\label{lem:Kas}
Let $A$, $B$ and $D$ be $\sigma$-unital Real C*-algebras such that $A$ is separable, let $(E _1, \pi _1, T_1)$ be a real self-adjoint Kasparov $A$-$B$ bimodule and let $(E_2, \varphi_2 , F_2)$ be a real Kasparov $B$-$D$ bimodule. Set $E:=E_1 \otimes _B E_2$, $\pi :=\pi_1 \otimes _B 1$ and $\tilde{T}_1:=T_1 \otimes _B 1$. Let $G=\big( \begin{smallmatrix} 0 & G_0^* \\ G_0 & 0\end{smallmatrix}
\big) \in \bB (E)$ be an odd $F$-connection and assume that $[\pi(A), T] \subset \bK(E)$, where
\[ T= \begin{pmatrix}\tilde{T}_1 & (1-\tilde{T}^2_1)^{1/4}G^*_0 (1-\tilde{T}^2_1)^{1/4} \\  (1-\tilde{T}^2_1)^{1/4}G_0 (1-\tilde{T}^2_1)^{1/4} & -\tilde{T}_1 \end{pmatrix} \in \bB(E). \]
Then, the real self-adjoint Kasparov $A$-$D$ bimodule $(E^\circ , \pi, T)$ represents the Kasparov product $[E_1,\pi_1, T_1] \hotimes_B [E_2, \pi_2 , F_2]$.
\end{lem}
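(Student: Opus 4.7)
The strategy is to reduce the lemma to the classical (even) Kasparov product formula (Blackadar, Theorem 18.10.1) via the Clifford equivalence $\KKR_{-1}(A,-) \cong \KKR(A \hotimes \Cl_{0,1}, -)$ established in the preceding lemma. Under this correspondence, $(E_1, \pi_1, T_1)$ is carried to the even $(A \hotimes \Cl_{0,1})$-$B$ bimodule $(\hat E_1, \hat\pi_1, F_1)$ on $\hat E_1 := E_1 \oplus E_1^{\op}$ with $F_1 := \bigl(\begin{smallmatrix} 0 & T_1 \\ T_1 & 0 \end{smallmatrix}\bigr)$ and $\hat\pi_1(f) := \bigl(\begin{smallmatrix} 0 & -1 \\ 1 & 0 \end{smallmatrix}\bigr)$, while $(E^\circ, \pi, T)$ is carried to $(\hat E, \hat\pi \otimes 1, \hat T)$ with $\hat T := \bigl(\begin{smallmatrix} 0 & T \\ T & 0 \end{smallmatrix}\bigr)$. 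After identifying $\hat E \cong \hat E_1 \otimes_B E_2$ as graded Hilbert $D$-modules (a direct inspection of degree-$0$ and degree-$1$ components, using that $E_1$ is ungraded), the lemma reduces to showing that $(\hat E, \hat\pi \otimes 1, \hat T)$ represents the even Kasparov product $(\hat E_1, \hat \pi_1, F_1) \hotimes_B (E_2, \varphi_2, F_2)$ in $\KKR(A \hotimes \Cl_{0,1}, D)$.

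First I would verify that $(E^\circ, \pi, T)$ is a legitimate real self-adjoint Kasparov $A$-$D$ bimodule. Reality and self-adjointness are immediate from the formula; the commutator condition $[\pi(A), T] \subset \bK(E)$ is part of the hypothesis; and $\pi(A)(T^2 - 1) \subset \bK(E)$ follows from a direct block-matrix computation using the identity $\tilde T_1^2 - 1 = -\sigma^4$ with $\sigma := (1 - \tilde T_1^2)^{1/4}$, the $F_2$-connection property $\pi(a)(G^2 - 1) \in \bK(E)$ (which comes from $(E_2, \varphi_2, F_2)$ being a Kasparov module), and the commutator hypothesis to absorb the cross-terms $[\tilde T_1, \sigma G \sigma]$ against $\pi(a)$.

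The next step is to recognize $\hat T$ as a specific instance of the even Kasparov-product formula. Decomposing $T = \tilde T_1 \gamma + \sigma G \sigma$, where $\gamma$ denotes the grading operator of $E_2$ lifted to $E$, one sees that $\hat T$ takes the familiar shape $\hat T = F_1 \otimes 1 + (1 - (F_1 \otimes 1)^2)^{1/4}\, \hat G\, (1 - (F_1 \otimes 1)^2)^{1/4}$ for the natural $F_2$-connection $\hat G$ on $\hat E$ extending $G$ (using that $(F_1 \otimes 1)^2 = \tilde T_1^2 \cdot \mathrm{Id}$ on $\hat E$). Applying Blackadar's Theorem 18.10.1 then requires checking (i) that $\hat T$ is an $F_2$-connection, which follows because $G$ is one and $\sigma$ is approximately central modulo $\bK$ by the commutator hypothesis, and (ii) the Connes--Skandalis positivity $(\hat\pi \otimes 1)(x) [F_1 \otimes 1, \hat T]_{\mathrm{gr}} (\hat\pi \otimes 1)(x)^* \geq 0 \pmod{\bK}$ for all $x \in A \hotimes \Cl_{0,1}$.

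The main obstacle will be verifying condition (ii). A direct calculation of the graded anti-commutator produces a block expression in which $2\tilde T_1^2 \gamma$-terms mix with $\{\tilde T_1, \sigma G \sigma\}$-terms; these are not individually positive, and the positivity must emerge from a careful combination. The $1/4$ exponent in $\sigma$ is chosen precisely so that $\sigma^4 = 1 - \tilde T_1^2$, and combining this identity with the $F_2$-connection relation $G^2 \equiv 1 \pmod{\bK}$ while exploiting the $\Cl_{0,1}$-equivariance (in particular the sign flip produced by $\hat\pi_1(f)$, which converts the would-be obstruction $2\tilde T_1^2 \gamma$ into a term of uniform sign after testing against $\hat\pi(x)$) yields the required sum-of-squares structure, the residual commutator errors being controlled by $[\pi(A), T] \in \bK$. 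Once this bookkeeping is complete, Blackadar's theorem identifies $\hat T$ as a Kasparov-product representative, and unwinding the Clifford equivalence returns $(E^\circ, \pi, T)$ as the product in $\KKR_{-1}$.
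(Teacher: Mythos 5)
Your overall route is the same as the paper's: pass to the $\Cl_{0,1}$-picture, identify $\hat E_1 \hotimes_B E_2$ with $\widehat{E^\circ}$, recognize the operator as Blackadar's product formula $F_1 \hotimes 1 + ((1-F_1^2)^{1/4} \hotimes 1)\, \hat G\, ((1-F_1^2)^{1/4} \hotimes 1)$, and invoke \cite[Theorem 18.10.1]{MR1656031} (which indeed carries over to Real $\KK$). The defect is in how you propose to invoke that theorem. Theorem 18.10.1 is a sufficient-condition theorem: once $\hat G$ is an $F_2$-connection and the compact-commutator condition holds for the resulting operator (here exactly the hypothesis $[\pi(A),T]\subset\bK(E)$ of the lemma), the theorem itself delivers that the triple is a Kasparov product; neither the connection property of the product operator nor the Connes--Skandalis positivity is left for the user to check. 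You instead declare the positivity inequality to be ``the main obstacle'' and then do not prove it: the last paragraph only gestures at ``a careful combination'' yielding ``the required sum-of-squares structure.'' As written, the central step of your argument is therefore missing, even though it is in fact unnecessary.

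The heuristic you offer for that step is also off. After the Clifford correspondence the relevant graded commutator is $[F_1\hotimes 1, \hat T]_{\mathrm{gr}} = 2(F_1\hotimes 1)^2 + M[\hat G, F_1\hotimes 1]_{\mathrm{gr}}M$ with $M=(1-F_1^2)^{1/4}\hotimes 1$, and its leading term $2(F_1\hotimes 1)^2=2\tilde T_1^2$ is already positive; there is no grading operator in it and no ``would-be obstruction $2\tilde T_1^2\gamma$'' to be rescued by a sign flip coming from $\hat\pi_1(f)$. That picture only appears if you compute in the ungraded presentation $T=\tilde T_1\gamma+\sigma G\sigma$, where the Connes--Skandalis criterion is not formulated. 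The repair is exactly what the paper does: write down the explicit graded unitary $U\colon \hat E_1\hotimes E_2\to\widehat{E^\circ}$ (the sign in $-e_1^0\otimes e_2^1$ is what makes it intertwine the $\Cl_{0,1}$-actions, so this identification deserves more than ``direct inspection''), verify by the block computation that $U\bigl(F_1\hotimes 1 + M\hat GM\bigr)U^*=\bigl(\begin{smallmatrix}0&T\\ T&0\end{smallmatrix}\bigr)$, and then apply Theorem 18.10.1 using only the assumed condition $[\pi(A),T]\subset\bK(E)$.
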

\begin{proof}
Let $(\hat{E}_1,\varphi_1, F_1)$ be the real Kasparov $(A \hotimes \Cl_{0,1})$-$B$ bimodule corresponding to $(E_1, \varphi_1, F_1)$, that is, $\varphi _1(f)=\big( \begin{smallmatrix} 0 & -1\\ 1 & 0 \end{smallmatrix} \big)
$, $\varphi _1 |_A = \big( \begin{smallmatrix} \pi_1 & 0\\ 0 & \pi_1 \end{smallmatrix} \big)$ and $F_1= \big( \begin{smallmatrix} 0 & T_1\\ T_1 & 0 \end{smallmatrix}\big) $. 
Then, the $\bZ_2$-graded unitary $U \colon \hat{E}_1 \hotimes E_2 \cong \widehat{E^\circ}$ given by 
\[ (e_1^0 \oplus e_1^1) \hotimes (e_2^0 \oplus e_2^1)  
\mapsto  ((e_1^0 \otimes e_2^0) \oplus (e_1^1 \otimes e_2^1)) \oplus ((e_1^1 \otimes e_2^0) \oplus (-e_1^0 \otimes e_2^1)) 
\]
satisfies $U(\varphi_1 \hotimes 1)(f)U^*=\big( \begin{smallmatrix}0 & -1 \\ 1 & 0 \end{smallmatrix} \big)$.

Set $K_0:=(1-\tilde{T}^2_1)^{1/4}G_0 (1-\tilde{T}^2_1)^{1/4}$ and  
\begin{align*}
 F&= U(F_1 \hotimes 1 + ((1-F_1^2)^{1/4}\hotimes 1)G((1-F_1^2)^{1/4} \hotimes 1))U^*\\
&=\begin{pmatrix}
0 & 0 & \tilde{T}_1 & K_0^* \\
0 & 0 & K_0 & -\tilde{T}_1 \\
\tilde{T}_1 & K_0^* & 0 & 0 \\ 
K_0 & -\tilde{T}_1 & 0 & 0
\end{pmatrix}
=\begin{pmatrix}
0 & T \\ T & 0
\end{pmatrix}
\in \bB(\widehat{E^\circ}).
\end{align*}
Now we apply \cite[Theorem 18.10.1]{MR1656031}, which also holds for Real $\KK$-groups, to conclude that the Kasparov bimodule $(\widehat{E^\circ}, \mathop{\mathrm{Ad}} U \circ \varphi  , F)$, which corresponds to the self-adjoint Kasparov $A$-$D$ bimodule $(E^\circ,\pi,T)$, represents the Kasparov product $[E_1, \pi _1,T_2] \hotimes _B [E,\pi_2,F_2]$. 
\end{proof}

\begin{rmk}\label{rmk:KR-1}
In a similar fashion to the case of complex $\K_1$-theory, there is a unitary description of the $\KR_{-1}$-group (a reference is \cite{MR3576286}). For a Real C*-algebra $A$, the group $\KR_{-1}(A)$ is defined as the set of homotopy classes of transpose-invariant unitaries, that is, 
\begin{align} \KR_{-1}(A):= \{ u \in 1+A \otimes \bK \mid uu^*=u^*u=1,  u^*=\overline{u} \}/\sim _{\text{homotopy}}. \label{form:KR-1} \end{align}
The identification
\[ \KKR_{-1} (\bR, A)  \cong \KR_{-1}(A) \]
is given by the composition of the isomorphism $\KKR_{-1}(\bR, A) \cong \KR_0(Q_s(A))$ and the boundary homomorphism. It is described explicitly as
\begin{align*}
 [E, 1, T] \mapsto [-\exp (-\pi i T)] \in \KR_{-1}(\bK(E)) \cong \KR_{-1}(B).
\end{align*}
It $T$ is written as $D(1+D^2)^{-1/2}$ by a Real self-adjoint unbounded Kasparov bimodule $[E,1, D]$, then $-\exp (-\pi i T)$ is homotopic to the Cayley transform $\Cay (D)$ of $D$.
\end{rmk}

\bibliographystyle{alpha}
\bibliography{bibABC,bibDEFG,bibHIJK,bibLMN,bibOPQR,bibSTUV,bibWXYZ,arXiv}

\end{document}